\documentclass[11pt, twoside]{article}
\usepackage{amsmath, amsfonts, amssymb}
\usepackage{geometry}
\usepackage[english]{babel}
\usepackage{amsthm}
\usepackage[most]{tcolorbox}
\tcbuselibrary{theorems}
\usepackage{lmodern}
\usepackage{microtype}
\usepackage{multicol}
\usepackage{array}
\usepackage{lipsum}
\usepackage{hyperref}
\hypersetup{
    colorlinks=true,
    linkcolor=blue,    
    citecolor=blue,      
    urlcolor=blue        
}

\usepackage{cleveref}
\usepackage{subcaption}
\usepackage{graphicx}
\usepackage{float}
\usepackage{placeins}
\newtheorem{theorem}{Theorem}[section]

\usepackage{epstopdf}
\usepackage{ragged2e}
\usepackage{authblk}
\usepackage{orcidlink}
\usepackage{times}
\DeclareUnicodeCharacter{2217}{\textasteriskcentered}

\title{A Novel Hepatitis B Epidemic Model with Vertical Transmission, Spontaneous Recovery and Optimal Control Analysis}

\author[1]{Mustaq Ahmad \orcidlink{0009-0001-2758-1541}}
\author[1]{Archana Singh Bhadauria\thanks{Corresponding author: \texttt{archana.mathstat@ddugu.ac.in}} \orcidlink{0000-0002-7524-0784}}

\affil[1]{Department of Mathematics and Statistics, Deen Dayal Upadhyaya Gorakhpur University, Gorakhpur, India}

\date{}

\begin{document}

\maketitle

\begin{abstract}
This research describes a compartmental epidemic model of Hepatitis B virus (HBV) transmission that includes vertical transmission and spontaneous recovery in acute patients. The model incorporates a saturated treatment response for persistently infected populations and a vaccination mechanism for susceptible populations. The basic reproduction number, \(\mathcal{R}_0\), is calculated using the next-generation matrix approach, which provides important information on disease dynamics. To find out the most influential parameter of the model dynamics, a sensitivity analysis is carried out with the help of Latin Hypercube Sampling (LHS) along with the Partial Rank Correlation Coefficient (PRCC). The qualitative behavior of the model is investigated using stability analysis of disease-free and endemic equilibria. It is established that the disease-free equilibrium is globally asymptotically stable when \(\mathcal{R}_0 < 1\), but the endemic equilibrium achieves global stability when \(\mathcal{R}_0 > 1\). Pontryagin's Maximum Principle is used to optimize public health initiatives, resulting in three optimal control techniques that attempt to reduce the combined cost of treatment and immunization. The results provide a rigorous theoretical foundation for designing cost-effective interventions against HBV transmission.
\end{abstract}

\textbf{Keywords}: Epidemic model; Vertical transmission; Harmonic mean incidence; LHS/PRCC; Optimal control; Lyapunov theory

\section{Introduction}

\justifying The diagnosis of infectious diseases is challenging because it requires combining a clinical picture, epidemiological follow-up, and a complex mathematical simulation \cite{kermack1927contribution, anderson1991infectious, hethcote2000mathematics}. Hepatitis B \cite{world2024guidelines, worldint, beard2024combined} is the most significant of these burdens, particularly because of its complex natural history, high chronic burden, and the clinical challenges of effectively curing it during treatment \cite{cui2023global, who2024global, easterbrook20242024}. The simulation of modeling a Hepatitis B infection is the focus of this study, with particular attention paid to including a Holling Type II-like treatment response function. By combining the concepts of ecological modeling, clinical management algorithms, and epidemiology of infectious processes, we are able to suggest a prototype model that could aid in the comprehension of treatment impact saturation and the creation of more effective treatment plans \cite{mann2011modelling, martcheva2015introduction, pandey2025impact}.

\vspace{0.1cm}

\justifying Infectious diseases stem from harmful microorganisms—bacteria, viruses, fungi, and parasites—and they can directly spread between people through touch or indirectly through vectors, contaminated surfaces, and air. The impact of diseases on society has been highlighted before, such as during outbreaks like the Black Death, and now during modern epidemics like Influenza and COVID-19. As the world becomes more globalized, understanding the infection and transmission mechanisms is crucial. Mathematical modeling has quickly proven useful in the field of epidemiology and controlling infectious diseases. These models, which include simulation studies and differential equation models, enable researchers to analyze key transmission parameters, predict the course of an epidemic, and estimate the impact of possible interventions. These models also forecast the progression of diseases in populations, leading to informed public health decisions on which actions must be taken under different circumstances \cite{khan2016classification, yadav2024modelling, yadav2025comparative}.

\vspace{0.1cm}

\justifying Models can be categorized as deterministic or stochastic. The former mostly utilizes a system of ordinary differential equations (ODEs) to estimate average behaviors in large populations. For example, the SIR (Susceptible-Infectious-Recovered) model, which attempts to capture the critical aspects of the interplay between the various compartments of a population, enables the estimation of important parameters, such as critical threshold quantities, transmission dynamics, and the assessment of the impact of drug treatment and vaccination strategies. Historically, these models were developed for a generic infectious disease, but there are sophisticated models today that simultaneously consider age structure, spatial heterogeneity, and various control measures such as quarantine, vaccination, and antiviral therapies \cite{lenhart2007optimal, gaff2009optimal, lashari2016optimal, wang2019optimal, freddi2020optimal,  alrabaiah2020optimal, molina2022optimal, bhadauria2021siq}. In particular, treatments have to account for nonlinear dynamics, for example, the saturation effect, where increased drug concentration does not proportionally increase efficacy. Such capture of saturation phenomena is the hallmark of the Holling Type II functional response model, making it popular in treatment response analysis.

\vspace{0.1cm}
\justifying An integrated ecological control model emphasizing predator-prey interactions can be used to enable the estimation and forecast of viral reproduction vis-à-vis immune response or therapy intervention. Such flexibility is vital towards managing chronic infections like Hepatitis B. Currently, one of the most prevalent liver diseases in the world is a Hepatitis B virus (HBV) infection. It is estimated that approximately 296 million \cite{world2024guidelines, beard2024combined, who2024global} people are infected, which then leads to severe morbidity and mortality due to cirrhosis of the liver or hepatocellular carcinoma. Direct contact with infected blood or body fluids leads to the infection, and its progression from acute to chronic phase differs greatly across individuals.

\vspace{0.1cm}
\justifying The application of optimal control theory in epidemic models has been used with Pontryagin's Maximum Principle \cite{lenhart2007optimal, wang2019optimal,pontryagin2018mathematical} in categorizing the control pairs concerning the utilization of treatment(such as antiviral agents) and vaccination. These models attempt to solve the problem qualitatively and numerically and illustrate that the right treatment strategies can significantly improve the quantity of the recovered population while simultaneously decreasing the amount of susceptible, as well as the chronically infected population. In contrast, their greatest strength lies in their capability to emulate several settings that can vary in the assumptions for drug delivery, like drug action rates, delays in responses, and the activity of the immune system; therefore, assisting healthcare practitioners in forming effective guidelines.

\vspace{0.1cm}
\justifying With every discovery concerning Hepatitis B \cite{seeger2000hepatitis,thornley2008hepatitis}, it becomes more evident that the cycles of viral replication, immune responses from the host, and effects of treatment are multi-faceted, complex systems working in parallel. Mathematical modeling has emerged as a powerful tool in epidemiology \cite{castillo2002computation, castillo2002mathematical, bhadauria2022epidemic}, enabling researchers to capture disease dynamics, assess the effectiveness of control strategies, and guide public health interventions. Numerous compartmental models have been proposed to study HBV transmission dynamics, focusing on different aspects such as vaccination, treatment, and behavioral change \cite{anderson1991infectious, khan2016classification, zou2010modeling, khan2017transmission, khan2021modeling, khan2023modelling, butt2023computational, raezah2023exploring}. However, many of these models overlook critical biological features like spontaneous recovery in acute infection, nonlinear treatment efficacy, and vertical transmission, which significantly influence the spread and persistence of HBV. Some recent models have incorporated optimal control theory to identify cost-effective intervention strategies. For example, Sharomi and Gumel (2008) \cite{GUMEL2003409} introduced a control-based framework that analyzed the impact of treatment and vaccination. Likewise, Lenhart et al. (2007) \cite{lenhart2007optimal} developed an optimal control model focusing on treatment strategies for chronic HBV carriers. While these efforts have contributed to the field, they often assume constant or linear treatment responses, which may not reflect the saturation effect seen in real-world medical interventions.
\vspace{0.1cm}
The HBV models still appear to apply linear or piecewise control strategies representing the effects of drugs on viral replication processes. These methods risk missing the saturation phenomenon seen in practice, where treatment via drug increases does not linearly correlate to increased suppression of the virus. To address these gaps, this study formulates a novel HBV transmission model that integrates:
\begin{itemize}
    \item Vertical transmission (from infected mothers to newborns),
    \item Spontaneous recovery in acutely infected individuals,
    \item Included a saturating function form representing biological constraints of drug action below high viral loads,
    \item Vaccination control among susceptible individuals.
\end{itemize}


\section{Model Formation}
The model equations governing the transmission dynamics of Hepatitis B are:

\begin{align} \label{model eqs.}
\left.
\begin{array}{rl}
\frac{dS}{dt} &= \lambda (1 - \alpha C) - \beta \frac{2 S C}{S + C} - \gamma A S - u_1 S - \mu S, \\
\frac{dA}{dt} &= \gamma A S + \beta \frac{2 S C}{S + C} - \sigma A - \mu A - \theta A, \\
\frac{dC}{dt} &= \theta A + \lambda \alpha C - \mu C - \rho  C - \frac{r u_2 C}{1 + \phi^2}, \\
\frac{dR}{dt} &= \sigma A + \frac{r u_2 C}{1 + \phi^2} + u_1 S - \mu R.
\end{array}
\right\}
\end{align}
where the initial conditions are $S(0) > 0$, $A(0) >0$, $C(0) > 0$, and $R(0) >0$.

Table \ref{table1} provides a detailed description of the parameters used in the model equations \eqref{model eqs.}, including their nomenclature, dimensions, and values for numerical simulations.

\begin{table}[ht]
    \centering
    \begin{tabular}{>{\centering\arraybackslash}p{3cm} >{\centering\arraybackslash}p{5cm} >{\centering\arraybackslash}p{3cm} >{\centering\arraybackslash}p{3cm}}
        \hline
        \textbf{Parameter} & \textbf{Nomenclature} & \textbf{Value} & \textbf{Dimension} \\
        \hline
        $\beta$ & Infection transmission (chronic to susceptible) rate & 0.0006 & per individual per week \\
        $\gamma$ & Infection transmission (acute to susceptible) rate  & 0.00008 & per individual per week \\
        $\theta$ & Progression rate from acute to chronic & 0.000002 & per week \\
        $\rho$ & Infection-induced mortality rate & 0.008 & per week \\
        $\phi$ & Saturation constant rate in treatment & 0.002 & dimensionless \\
        $\mu$ & Natural death rate & 0.02 & per week \\
        $\alpha$ & Vertical transmission rate & 0.0001 & per individual \\
        $\lambda$ & Recruitment rate & 25 & individuals/week \\
        $r$ & Recovery boosting rate & 5.00 & dimensionless \\
        $u_1$ & Vaccination rate & 0.006 & per week \\
        $u_2$ & Treatment rate & 0.008 & per week \\
        $\sigma$ & Spontaneous recovery rate & 0.09 & per week \\
        \hline
    \end{tabular}
    \caption{Model parameters, their descriptions, values, and respective dimensions used in the simulations.}
    \label{table1}
\end{table}

\newpage
\section{Positivity and Boundedness of the Model}

We prove that the solutions of the system \eqref{model eqs.} are positive and bounded for all \( t \geq 0 \), given positive initial conditions.

\subsection{Positivity of Solutions}
\renewcommand\qedsymbol{$\blacksquare$}
\begin{theorem}

Consider that \( S(0) > 0, A(0) > 0, C(0) > 0, R(0) > 0 \). Then at \(t \geq 0\) the solutions of \( S(t) > 0, A(t) > 0, C(t) > 0, R(t) > 0 \) are also positive.  
\end{theorem}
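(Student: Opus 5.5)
The plan is a first-exit-time argument. Let $x(t)=(S,A,C,R)$ be the local solution of \eqref{model eqs.}, which exists because the right-hand side is locally Lipschitz on $\{S+C>0\}$. Set
\[
t_1=\sup\{\,t>0:\ S,A,C,R>0 \text{ on } [0,t]\,\}.
\]
By continuity and the positive initial data, $t_1>0$. Suppose for contradiction that $t_1<\infty$. Then at least one component vanishes at $t_1$, while all four are positive on $[0,t_1)$. On that interval I will derive, for each component, a linear differential inequality whose integrating-factor solution stays strictly positive up to and including $t_1$. This contradicts the definition of $t_1$.

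\textbf{The three easy equations.} On $[0,t_1)$ every gain term in the $A$, $C$ and $R$ equations is nonnegative. Dropping them gives
\[
A'\ge -(\sigma+\mu+\theta)A,\qquad
C'\ge -\Big(\mu+\rho+\tfrac{ru_2}{1+\phi^2}\Big)C,\qquad
R'\ge -\mu R.
\]
Multiplying each by its exponential integrating factor yields bounds such as $A(t)\ge A(0)e^{-(\sigma+\mu+\theta)t}>0$, and similarly for $C$ and $R$. Each bound holds at $t=t_1$ as well.

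\textbf{The $S$ equation.} For the incidence term I use $\frac{2SC}{S+C}\le 2S$, valid since $C/(S+C)\le 1$. This gives
\[
S'\ge \lambda(1-\alpha C)-\big(2\beta+\gamma A+u_1+\mu\big)S .
\]
If $1-\alpha C(t)\ge 0$ on $[0,t_1]$, the integrating factor $\exp\!\big(\int_0^t(2\beta+\gamma A+u_1+\mu)\,ds\big)$ shows $S(t)\ge S(0)e^{-\int_0^t(\cdots)}>0$ on $[0,t_1]$. Continuity of $A$ makes this integral finite on $[0,t_1]$.

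\textbf{Main obstacle: the sign of $\lambda(1-\alpha C)$.} This recruitment term is not automatically nonnegative, so positivity of $S$ cannot be obtained from the structure of the equation alone. I would control it through the total population $N=S+A+C+R$. Summing the equations, the vertical-transmission terms $\pm\lambda\alpha C$ cancel, and so do the vaccination and treatment transfers, leaving
\[
N'=\lambda-\mu N-\rho C\le \lambda-\mu N
\]
on $[0,t_1)$. Hence $C\le N\le \max\{N(0),\lambda/\mu\}$ on $[0,t_1]$. It therefore suffices that
\[
\alpha\max\{N(0),\lambda/\mu\}<1,
\]
which I will state explicitly as a standing biological assumption. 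With the values in Table~\ref{table1}, $\alpha\lambda/\mu=0.125<1$, so it holds for every $N(0)<10^4$. Under this assumption $1-\alpha C>0$ on $[0,t_1]$, and the $S$ estimate above applies.

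\textbf{Conclusion.} All four components are then strictly positive at $t_1$, contradicting the choice of $t_1$. Hence $t_1=\infty$. The same bound on $N$ rules out blow-up, so the solution exists and stays positive for all $t\ge 0$.
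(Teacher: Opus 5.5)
Your proof is correct, and it takes a different and more rigorous route than the paper's. The paper bounds each compartment separately with a Gr\"onwall-type inequality and never sets up a first-exit time. As a result, when it drops the gain terms in the $A$, $C$, $R$ equations, it relies on positivity of the other compartments, which is circular; your $t_1$ argument removes this. For $S$, the paper asserts $S'\ge-(\gamma A+u_1+\mu)S$. This silently discards $\lambda(1-\alpha C)$, and it drops the non-positive term $-2\beta SC/(S+C)$ in the wrong direction. Its Laplace-transform variant claims $S'\ge\lambda-(u_1+\mu)S$, which is false for the same reason, and then solves that inequality as if it were an equation. Your estimate $2SC/(S+C)\le 2S$ gives a valid linear lower bound, and using $N'=\lambda-\mu N-\rho C$ to keep $\alpha C<1$ deals with the actual obstacle. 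For $C$, the paper assumes $\lambda\alpha<\mu+\rho+\frac{ru_2}{1+\phi^2}$; your argument shows this is unnecessary, since $\lambda\alpha C\ge 0$ can simply be dropped.

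The cost of your route is the hypothesis $\alpha\max\{N(0),\lambda/\mu\}<1$. Some hypothesis of this kind is unavoidable, because the statement as written is false. Suppose $\alpha C(0)>1$ and consider the solution starting from $S(0)=0$, with $A(0),C(0),R(0)>0$; this point is still inside the domain $S+C>0$. There $S'(0)=\lambda(1-\alpha C(0))<0$, so that solution has $S(t_*)<0$ for some small $t_*$. By continuous dependence on initial data, $S$ then becomes negative for every sufficiently small $S(0)>0$. The paper's proof hides this only through its invalid inequality for $S$. So you were right to make the restriction explicit: it corrects the theorem and is not a gap in your argument.

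One small tidy-up: take $t_1$ inside the maximal existence interval. Also note that your lower bound on $C$ keeps the trajectory away from $S+C=0$ on bounded time intervals; together with the bound on $N$, this gives global existence.
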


\begin{proof}
Let the initial conditions be \( S(0) > 0, A(0) > 0, C(0) > 0, R(0) > 0 \). We show that each compartment remains non-negative.

\paragraph{Susceptible class \( S(t) \):}
From the first equation of~\eqref{model eqs.},
\[
\frac{dS}{dt} = \lambda(1 - \alpha C) - \beta \frac{2SC}{S + C} - \gamma A S - u_1 S - \mu S.
\]
Note that the nonlinear incidence terms and natural death are non-positive. Hence,
\[
\frac{dS}{dt} \geq - (\gamma A + u_1 + \mu) S.
\]
This yields the inequality:
\[
\frac{dS}{dt} \geq -\kappa S \quad \text{where } \kappa = \gamma A + u_1 + \mu.
\]
Using Grönwall’s inequality, which states that if \( y(t) \) satisfies \( \frac{dy}{dt} \leq -\kappa(t) y \), then \( y(t) \leq y(0) e^{-\int_0^t \kappa(s)\, ds} \), we obtain:
\[
S(t) \geq S(0) e^{-\int_0^t \kappa(s)\, ds} > 0.
\]

\paragraph{Acute class \( A(t) \):}
\[
\frac{dA}{dt} = \gamma A S + \beta \frac{2SC}{S + C} - (\sigma + \mu + \theta) A \geq - (\sigma + \mu + \theta) A.
\]
Then,
\[
A(t) \geq A(0) e^{-(\sigma + \mu + \theta)t} > 0.
\]

\paragraph{Chronic class \( C(t) \):}
\[
\frac{dC}{dt} = \theta A + \lambda \alpha C - \left( \mu + \rho + \frac{r u_2}{1 + \phi^2} \right) C.
\]
Assuming \( \lambda \alpha < \mu + \rho + \frac{r u_2}{1 + \phi^2} \), we get:
\[
\frac{dC}{dt} \geq -k C, \quad \text{for some } k > 0,
\]
which implies:
\[
C(t) \geq C(0) e^{-k t} > 0.
\]

\paragraph{Recovered class \( R(t) \):}
\[
\frac{dR}{dt} = \sigma A + \frac{r u_2 C}{1 + \phi^2} + u_1 S - \mu R \geq -\mu R,
\]
so,
\[
R(t) \geq R(0) e^{-\mu t} > 0.
\]

Thus, all compartments remain positive for all \( t \geq 0 \).

\textbf{Alternate approach}\\
Let us recall the Laplace transform of a function \( f(t) \) defined for \( t \geq 0 \):

\[
\mathcal{L}\{f(t)\}(s) = \int_0^\infty e^{-st} f(t)\, dt,
\]
which is well-defined for functions of exponential order. We assume all compartments are such.

Let us consider the first equation of our model \eqref{model eqs.}
\begin{align*}
  \frac{dS}{dt} &= \lambda(1 - \alpha C) - \beta \frac{2SC}{S + C} - \gamma A S - u_1 S - \mu S \\
  &\Rightarrow \frac{dS}{dt} \geq \lambda - (u_1 + \mu) S
\end{align*}
this implies that the susceptible class declines at a rate proportional to (\(u_1 + \mu\)).  

Taking the Laplace transform of both sides:

\[
\mathcal{L}\left\{ \frac{dS}{dt} \right\} \geq \mathcal{L}\{ \lambda \} - (u_1 + \mu) \mathcal{L}\{ S \}.
\]

Using the Laplace rule \( \mathcal{L}\left\{ \frac{dS}{dt} \right\} = s \mathcal{L}\{ S \} - S(0) \), we obtain:

\[
s \mathcal{L}\{ S \} - S(0) = \frac{\lambda}{s} - (u_1 + \mu) \mathcal{L}\{ S \}.
\]

Solving for \( \mathcal{L}\{ S \} \):

\[
\mathcal{L}\{ S \} = \frac{S(0) + \frac{\lambda}{s}}{s + u_1 + \mu}.
\]

The inverse Laplace transform of this yields:

\[
S(t) = S(0) e^{-(u_1 + \mu)t} + \frac{\lambda}{u_1 + \mu} \left(1 - e^{-(u_1 + \mu)t} \right) > 0,
\]
for all \( t \geq 0 \), confirming positivity.

Similar reasoning can be applied to each compartment when their governing equations are either linear or can be compared to linear systems bounded below by zero.

Hence, under the assumption that all parameters and initial conditions are positive, and nonlinearities do not introduce negative forcing terms, the solutions remain non-negative over time. Thus, all the state variables \(S(t) > 0, A(t)>0, C(t)>0, R(t)>0, \forall t \geq 0.\)

\end{proof}

\subsection{Boundedness of the System}
\begin{theorem}
The state variables \(W(t) = (S(t), A(t), C(t), R(t))\) of the model \eqref{model eqs.} are bounded \(\forall t \geq 0.\)
\end{theorem}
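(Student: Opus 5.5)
The plan is to bound the total population $N(t)=S(t)+A(t)+C(t)+R(t)$ and then use the positivity established in the preceding theorem. Since every compartment is positive, each of $S,A,C,R$ is bounded above by $N$. Adding the four equations of \eqref{model eqs.}, the incidence terms $\beta\frac{2SC}{S+C}$ and $\gamma AS$ cancel. So do the transfer terms $\theta A$, $\sigma A$, $u_1S$ and $\frac{ru_2C}{1+\phi^2}$. The vertical transmission terms $-\lambda\alpha C$ (from $S$) and $+\lambda\alpha C$ (from $C$) cancel as well. What remains is
\[
\frac{dN}{dt}=\lambda-\mu N-\rho C\le \lambda-\mu N ,
\]
where the inequality uses $C(t)>0$ from the previous theorem.

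Next I would solve the linear comparison problem $\frac{dy}{dt}=\lambda-\mu y$ with $y(0)=N(0)$, either by an integrating factor or by the Laplace transform argument the paper already used for positivity. By the standard comparison (Grönwall) lemma this gives
\[
N(t)\le N(0)e^{-\mu t}+\frac{\lambda}{\mu}\left(1-e^{-\mu t}\right)\le \max\left\{N(0),\frac{\lambda}{\mu}\right\}
\]
for all $t\ge 0$. In addition, $\limsup_{t\to\infty}N(t)\le \lambda/\mu$. Hence each state variable in $W(t)$ is bounded by $\max\{N(0),\lambda/\mu\}$.

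Finally I would record the positively invariant, attracting region
\[
\Omega=\left\{(S,A,C,R)\in\mathbb{R}_+^4: S+A+C+R\le \frac{\lambda}{\mu}\right\},
\]
which is convenient for the later stability analysis.

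The main obstacle is making sure the addition really cancels: the vertical transmission term $\lambda\alpha C$ appears with opposite signs in the $S$ and $C$ equations, and one must check this carefully. A second point needs attention: the inequality $-\rho C\le 0$, and the bound of each compartment by $N$, both depend on the positivity theorem. That theorem needs $S+C>0$ for the harmonic-mean incidence to be defined, and positivity guarantees this. Since boundedness also rules out finite-time blow-up, the bounds hold for every $t\ge0$ and not only on a maximal interval of existence.
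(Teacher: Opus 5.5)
Your proposal is correct and is essentially the paper's proof: sum the four equations to get $\frac{dN}{dt}=\lambda-\mu N-\rho C\le \lambda-\mu N$, solve the linear comparison problem (the paper does this directly and again by Laplace transform), and use positivity to bound each compartment by $N$. Your bound $N(t)\le\max\{N(0),\lambda/\mu\}$ is in fact the correct form of the paper's conclusion, since the paper's claim $N(t)\le\lambda/\mu$ for all $t>0$ tacitly requires $N(0)\le\lambda/\mu$; also note that positive invariance of your $\Omega$, which the statement does not need, requires $\lambda(1-\alpha C)\ge 0$ on $\Omega$, i.e.\ $\alpha\lambda\le\mu$.
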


\begin{proof}
To determine the bounded region of the above model \eqref{model eqs.}, we consider the following total population:
\[
N(t) = S(t) + A(t) + C(t) + R(t).
\]

Taking the time derivative of the total population N(t), we obtain
\[
\frac{dN}{dt} = \frac{dS}{dt} + \frac{dA}{dt} + \frac{dC}{dt} + \frac{dR}{dt}
\]

Using the equations of the model \eqref{model eqs.}, we obtain the following results as
\[
\frac{dN}{dt} = \lambda - \mu(S + A + C + R) - \rho C \leq \lambda - \mu N.
\]

This gives the linear differential inequality:
\[
\frac{dN}{dt} \leq \lambda - \mu N.
\]
Solving the above inequality, we get
\[
N(t) \leq N(0) e^{-\mu t} + \frac{\lambda}{\mu}(1 - e^{-\mu t}).
\]

as \(t \to \infty\), the term \(e^{-\mu t} \to 0\), since \(\mu\) is a positive constant. Therefore, the solution N(t) approaches \( \frac{\lambda}{\mu} \), i.e., all compartments are bounded above by \( \frac{\lambda}{\mu} \).

Thus, the solution N(t) is bounded above by \(\frac{\lambda}{\mu}\). Therefore, the solution N(t) is bounded and satisfies \(0 \leq N(t) \leq \frac{\lambda}{\mu}\), for all \(t > 0\).

\textbf{Alternate approach}\\
We consider the total population:
\[
N(t) = S(t) + A(t) + C(t) + R(t),
\]
and sum the system equations to obtain:
\[
\frac{dN}{dt} = \lambda - \mu N(t) - \rho C(t) \leq \lambda - \mu N(t).
\]

This inequality suggests \( N(t) \) is bounded above. To confirm this using the Laplace transform, we solve the associated linear differential equation:
\[
\frac{dN}{dt} + \mu N = \lambda, \quad N(0) = N_0.
\]

Applying the Laplace transform:
\[
s \mathcal{L}\{N\} - N_0 + \mu \mathcal{L}\{N\} = \frac{\lambda}{s}.
\]

Solving for \( \mathcal{L}\{N\} \):
\[
\mathcal{L}\{N\} = \frac{N_0 + \frac{\lambda}{s}}{s + \mu}.
\]

Taking the inverse Laplace transform:
\[
N(t) = N_0 e^{-\mu t} + \frac{\lambda}{\mu}(1 - e^{-\mu t}).
\]

Hence,
\[
\lim_{t \to \infty} N(t) = \frac{\lambda}{\mu},
\]
and since \( N(t) \leq N_0 + \frac{\lambda}{\mu} \), the total population is uniformly bounded above.

Thus, under biologically realistic assumptions, all compartments \( S(t), A(t), C(t), R(t) \) are bounded above by the total population and hence by \( \frac{\lambda}{\mu} \).

This confirms boundedness via Laplace analysis.

Therefore, model \eqref{model eqs.} is both positive and bounded for all \(t \geq 0\), provided all initial conditions are strictly positive.
\end{proof}

\section{Existence of the Disease-Free Equilibrium (DFE)}
The model equations governing the transmission dynamics of Hepatitis B are:

\begin{align*}
\frac{dS}{dt} &= \lambda (1 - \alpha C) - \beta \frac{2 S C}{S + C} - \gamma A S - u_1 S - \mu S, \\
\frac{dA}{dt} &= \gamma A S + \beta \frac{2 S C}{S + C} - \sigma A - \mu A - \theta A, \\
\frac{dC}{dt} &= \theta A + \lambda \alpha C - \mu C - \rho  C - \frac{r u_2 C}{1 + \phi^2}, \\
\frac{dR}{dt} &= \sigma A + \frac{r u_2 C}{1 + \phi^2} + u_1 S - \mu R.
\end{align*}

The \textbf{disease-free equilibrium (DFE)} \(E^0 = (S^0, A^0, C^0, R^0)\) is given by:

\begin{align*}
S^0 &= \frac{\lambda}{\mu + u_1}, & A^0 &= 0, & C^0 &= 0, & R^0 &= \frac{u_1 \lambda}{\mu (\mu + u_1)}.
\end{align*}

\section{Existence of Endemic Equilibrium Point}
To obtain the endemic equilibrium point, we equate the right-hand side of the model equation to zero, i.e.,
\begin{align}
&\lambda (1 - \alpha C) - \beta \frac{2 S C}{S + C} - \gamma A S - u_1 S - \mu S = 0, \\
&\gamma A S + \beta \frac{2 S C}{S + C} - (\sigma + \mu + \theta) A = 0, \\
&\lambda \alpha C - (\mu+\rho)C + \theta A - \frac{r u_2 C}{1+\phi^2} = 0, \\
&\sigma A + \frac{r u_2 C}{1 + \phi^2} + u_1 S - \mu R = 0.
\end{align}
From equation (6), we get,
\begin{align}
\gamma A S + \frac{2 \beta S C}{S + C} = \lambda(1 - \alpha C) - (\mu + u_1)S
\end{align}

Using the above equation (10) in equation (7), we have
\begin{align}
    \lambda(1 - \alpha C) - (\mu + u_1)S - (\sigma + \mu + \theta)A = 0 
\end{align}

Using equation (8), we get the following expression for A,
\begin{align}
A = \frac{1}{\theta} \left( \rho + \mu + \frac{r u_2}{1 + \phi^2} - \alpha \lambda \right) C
\end{align}

Using the value of A in equation (11), we get
\begin{align*}
\lambda(1 - \alpha C) - (\mu + u_1)S - \frac{1}{\theta}(\sigma + \mu + \theta)
\left( \rho + \mu + \frac{r u_2}{1 + \phi^2} - \alpha \lambda \right)C = 0
\end{align*}
or,
\begin{align*}
\lambda - (\mu + u_1)S - \frac{1}{\theta}(\sigma + \mu + \theta)
\left( \rho + \mu + \frac{\delta u_2}{1 + \phi^2} \right)C - \alpha \lambda C = 0
\end{align*}
or,
\begin{align*}
\lambda - (\mu + u_1)S - \left( (\sigma + \mu + \theta)\left( \rho + \mu + \frac{r u_2}{1 + \phi^2} \right) + \alpha \lambda \theta \right)\frac{C}{\theta} = 0
\end{align*}
or,
\begin{align*}
\lambda \theta - (\mu + u_1) \theta S - \left( (\sigma + \mu + \theta)\left( \rho + \mu + \frac{r u_2}{1 + \phi^2} \right) + \alpha \lambda \theta \right)C = 0
\end{align*}
or,
\begin{align}
S = \frac{\lambda}{\mu + u_1} - \frac{1}{\theta}\left( (\sigma + \mu + \theta)\left( \rho + \mu + \frac{r u_2}{1 + \phi^2} \right) + \alpha \lambda \theta \right)C
\end{align}

Let us assume that
\begin{align*}
d_1 = \rho + \mu + \frac{r u_2}{1 + \phi^2}, \hphantom{\,} d_2 = \frac{\lambda}{\mu + u_1}, \hphantom{\,} d_3 = \sigma + \mu + \theta    
\end{align*}

Now, Equations (9), (12) \& (13) can be rewritten as
\begin{align}
\left.
\begin{array}{rl}
S &= d_2 - \dfrac{d_1 d_3 + \alpha \lambda \theta}{\theta} C \\[8pt]
A &= \dfrac{d_1 - \alpha \lambda}{\theta} C \\[8pt]
R &= \dfrac{1}{\mu} \left( u_1 S + \sigma A + \dfrac{r u_2 C}{1 + \phi^2} \right)
\end{array}
\right\}
\end{align}

Substituting the values of S and A in equation (7), we get
\begin{align*}
\gamma A S + \frac{2 \beta S C}{S + C} - d_3 A = 0
\end{align*}
or,
\begin{align*}
\gamma \left(\frac{(d_1 - \alpha \lambda)}{\theta}C \right) \left(\frac{\theta d_2 - d_1 d_3 - \alpha \lambda \theta}{\theta}C \right) + 2 \beta \frac{\left( \frac{\theta d_2 - d_1 d_3 - \alpha \lambda \theta}{\theta} \right) C \cdot C} {\left(\frac{\theta d_2 - d_1 d_3 - \alpha \lambda \theta}{\theta} \right)C + C} - \frac{d_3 (d_1 - \alpha \lambda)}{\theta} C = 0
\end{align*}
or,
\begin{align*}
\frac{\gamma}{\theta^2} (d_1 - \alpha \gamma)(\theta d_2 - d_1 d_3 - \alpha \lambda \theta) C^2 
+ 2 \beta \frac{(\theta d_2 - d_1 d_3 - \alpha \lambda \theta)} {(\theta d_2 - d_1 d_3 - \alpha \lambda \theta) + \theta} C - \frac{(d_1 - \alpha \lambda) d_3}{\theta} C = 0   
\end{align*}
or,
\begin{align*}
\gamma(d_1 - \alpha \gamma)(\theta d_2 - d_1 d_3 - \alpha \lambda \theta) C 
+ 2 \beta \theta \frac{(\theta d_2 - d_1 d_3 - \alpha \lambda \theta)} {(\theta d_2 - d_1 d_3 - \alpha \lambda \theta) + \theta}  - (d_1 - \alpha \lambda) \theta d_3 = 0   
\end{align*}
or,
\begin{align*}
C = \frac{\theta}{\gamma} \left[ \frac{d_3}{(\theta d_2 - d_1 d_3 - \alpha \lambda \theta)} 
- \frac{2 \beta}{(d_1 - \alpha \gamma)(\theta d_2 - d_1 d_3 - \alpha \lambda \theta +\theta)} \right]
\end{align*}
or,
\begin{align*}
C = \frac{d_3}{\gamma} \left[ \frac{\theta}{(\theta d_2 - d_1 d_3 - \alpha \lambda \theta)} 
- \frac{\mathcal{R}_0^C}{(\theta d_2 - d_1 d_3 - \alpha \lambda \theta +\theta)} \right]
\end{align*}

Thus, substituting the value of C in equation (10), we get the endemic equilibrium point \(E^* = (S^*, A^*, C^*, R^*) \) of the model.

\section{Reproduction Number}
In epidemiological models, a central concept is the \textit{reproduction number}, and it plays a very pivotal role. The reproduction number \(\mathcal{R}_0\) is the estimated rate of infection transmission potential, and for disease-free equilibrium stability, it provides a mathematical criterion. Basically, the reproduction number is the average number or mean of new infections that are induced by an infected individual, either directly or indirectly, when introduced into a completely susceptible population. It also provides the criteria for the stability of the system. To find the reproduction number for our model \eqref{model eqs.}, we follow the next-generation matrix (NGM) technique, which is given by Diekmann et al. (1990) \cite{diekmann1990definition, diekmann2000mathematical, diekmann2010construction}, and van den Driessche and Watmough (2002) \cite{easterbrook20242024, van2002reproduction}. The reproduction number is formulated and computed by the next-generation matrix, in which the whole population is partitioned into two parts \(F\) and \(V\), where \(F\) represents the rate of new arrivals (transmission terms) in the infected compartments, and \(V\) represents the remaining (transition terms) of the infectious compartments. Now, the matrix \(FV^{-1}\) is known as the next-generation matrix, and the spectral radius (i.e., dominant eigenvalue) of \(FV^{-1}\) produces the reproduction number \(\mathcal{R}_0\). In general, the reproduction number is a threshold quantity that determines whether the disease will spread or decline in the population, and it also serves as a mathematical criterion for assessing the stability of the disease-free equilibrium and the overall dynamical system.

In epidemiological modeling, a central concept is the \textit{basic reproduction number}, denoted by \(\mathcal{R}_0 \). It quantifies the expected number of secondary infections generated by a single infectious individual in a fully susceptible population. The reproduction number serves as a threshold parameter: if \( \mathcal{R}_0 < 1 \), the disease will eventually die out, whereas \( \mathcal{R}_0 > 1 \) indicates the potential for an outbreak or sustained transmission. To compute \( \mathcal{R}_0 \) for our model \eqref{model eqs.}, we apply the next-generation matrix method developed by Diekmann et al. \cite{diekmann1990definition} and formalized by van den Driessche and Watmough \cite{van2002reproduction}. In this framework, the population is partitioned into infected compartments, where \( F \) denotes the matrix of new infection terms (transmission), and \( V \) contains the transition terms representing movement among compartments. The next-generation matrix is then defined as \( FV^{-1} \), and the basic reproduction number is given by its spectral radius:
\[
\mathcal{R}_0 = \rho(FV^{-1}).
\]
This quantity not only determines the potential for disease invasion but also serves as a mathematical condition for the local stability of the disease-free equilibrium and the overall system dynamics.

We denote the state variables as $Z = (S, A, C, R)$, governed by the system:
\begin{align}
\frac{dS}{dt} &= \lambda(1-\alpha C) - \beta \frac{2SC}{S+C} - \gamma AS - u_1 S - \mu S, \\
\frac{dA}{dt} &= \gamma A S + \beta \frac{2 S C}{S + C} - \sigma A - \mu A - \theta A, \\
\frac{dC}{dt} &= \theta A + \lambda \alpha C - \mu C - \rho  C - \frac{r u_2 C}{1 + \phi^2}, \\
\frac{dR}{dt} &= \sigma A + \frac{r u_2 C}{1+\phi^2} + u_1 S - \mu R.
\end{align}

The infection and transition terms can be separated:
\[
\frac{d}{dt}
\begin{pmatrix}
A \\ C
\end{pmatrix}
= F - V
\]
where
\[
F = \begin{pmatrix}
\gamma S A & \frac{2\beta S^2}{(S+C)^2} \\ 0 & 0
\end{pmatrix},
\quad
V = \begin{pmatrix}
(\sigma+\theta+\mu)A \\ -\theta A + \left( \rho + \mu + \frac{r u_2}{1+\phi^2} - \alpha \lambda \right) C
\end{pmatrix}.
\]

The Jacobian matrices at the disease-free equilibrium (DFE) $E^0$ are:
\[
F = \begin{pmatrix}
\frac{\gamma \lambda}{u_1 + \mu} & 2 \beta \\
0 & 0
\end{pmatrix},
\quad
V = \begin{pmatrix}
\sigma + \theta + \mu & 0 \\
-\theta & \rho + \mu + \frac{r u_2}{1+\phi^2} - \alpha \lambda
\end{pmatrix}.
\]

Thus,
\[
V^{-1} =
\begin{pmatrix}
\frac{1}{\sigma + \theta + \mu} & 0 \\
\frac{\theta}{(\sigma + \theta + \mu)(\rho + \mu + \frac{r u_2}{1+\phi^2} - \alpha \lambda)} & \frac{1}{\mu + \rho + \frac{r u_2}{1 + \phi^2} - \alpha \lambda}
\end{pmatrix}.
\]

The next-generation matrix $FV^{-1}$ is:
\[
FV^{-1} = \begin{pmatrix}
\frac{\gamma \lambda}{(\sigma + \theta + \mu) (u_1 + \mu)} + \frac{2\beta \theta}{(\rho + \mu + \frac{r u_2}{1 + \phi^2} -\alpha \lambda) (\sigma+\theta+\mu)} & 0 \\
0 & 0
\end{pmatrix}.
\]

\[
\mathcal{R}_0 = \rho(FV^{-1}) = \mathcal{R}_0^A + \mathcal{R}_0^C,
\]
where
\[
\mathcal{R}_0^A = \frac{\gamma \lambda}{(\sigma+\theta+\mu)  (u_1 + \mu)}, \quad \mathcal{R}_0^C = \frac{2\beta \theta}{(\rho + \mu + \frac{r u_2}{1 + \phi^2} -\alpha \lambda)(\sigma+\theta+\mu)}.
\]

\section{Local and Global Stability of Equilibrium Points}
\renewcommand\qedsymbol{$\blacksquare$}

\begin{theorem}[Local Stability at Disease-Free Equilibrium (DFE)]
\label{Theorem1}
Model (\ref{model eqs.}) is locally asymptotically stable at \(E^0 = (S^0, A^0, C^0, R^0)\) if $\mathcal{R}_0 < 1$ and otherwise is unstable.
\end{theorem}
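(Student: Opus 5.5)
The plan is to linearize the system \eqref{model eqs.} at $E^0=(S^0,0,0,R^0)$ and read off the eigenvalues of the Jacobian. I will order the variables as $(A,C,S,R)$ so that the Jacobian becomes block triangular.

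First I check that linearization is legitimate. Near $E^0$ we have $S+C\approx S^0>0$, so the harmonic-mean incidence $2SC/(S+C)$ is smooth there. Its partial derivatives are
\[
\frac{\partial}{\partial C}\frac{2SC}{S+C}=\frac{2S^2}{(S+C)^2}, \qquad \frac{\partial}{\partial S}\frac{2SC}{S+C}=\frac{2C^2}{(S+C)^2},
\]
which equal $2$ and $0$ respectively at $E^0$. All partial derivatives of $\dot A$ and $\dot C$ with respect to $S$ and $R$ vanish at $E^0$, because every term there carries a factor $A$ or $C$. Hence the Jacobian is block lower triangular. Its diagonal blocks are the infected block
\[
J_I=\begin{pmatrix}\gamma S^0-d_3 & 2\beta\\ \theta & -(d_1-\alpha\lambda)\end{pmatrix},
\]
and the $2\times 2$ block for $(S,R)$, which is lower triangular with diagonal entries $-(\mu+u_1)$ and $-\mu$. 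The last two eigenvalues are negative, so stability reduces entirely to $J_I=F-V$.

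Next I apply the Routh--Hurwitz criterion for $2\times 2$ matrices. Throughout I use the standing assumption $d_1-\alpha\lambda>0$, already imposed in the positivity theorem and needed for $\mathcal{R}_0^C$ to make sense. With $S^0=\lambda/(\mu+u_1)$, one gets $\gamma S^0=d_3\mathcal{R}_0^A$ and $2\beta\theta=d_3(d_1-\alpha\lambda)\mathcal{R}_0^C$. Substituting these gives
\[
\det J_I=(d_3-\gamma S^0)(d_1-\alpha\lambda)-2\beta\theta=d_3(d_1-\alpha\lambda)\bigl(1-\mathcal{R}_0^A-\mathcal{R}_0^C\bigr)=d_3(d_1-\alpha\lambda)(1-\mathcal{R}_0),
\]
\[
\operatorname{tr}J_I=d_3(\mathcal{R}_0^A-1)-(d_1-\alpha\lambda).
\]
If $\mathcal{R}_0<1$, then $\mathcal{R}_0^A\le\mathcal{R}_0<1$. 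So the trace is negative and the determinant is positive, both eigenvalues have negative real part, and $E^0$ is locally asymptotically stable. If $\mathcal{R}_0>1$, the determinant is negative, so $J_I$ has one real positive eigenvalue and $E^0$ is unstable.

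The step I expect to need the most care is the identification of the threshold. The paper's displayed $FV^{-1}$ is not literally correct as written: the true spectral radius of $FV^{-1}$ is a different expression from $\mathcal{R}_0^A+\mathcal{R}_0^C$. However, the determinant identity above shows that $\mathcal{R}_0^A+\mathcal{R}_0^C<1$ holds exactly when $\rho(FV^{-1})<1$. This is the van den Driessche--Watmough equivalence $s(F-V)<0 \iff \rho(FV^{-1})<1$, with $F\ge0$ and $V$ a nonsingular M-matrix. So the theorem holds with the paper's $\mathcal{R}_0$ as the threshold quantity.

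I will therefore phrase the argument directly through $\det J_I$ and $\operatorname{tr}J_I$ rather than through the spectral radius. This way the conclusion does not depend on the exact form of $FV^{-1}$.
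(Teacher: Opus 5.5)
Your proof is correct and is the paper's argument (linearize at $E^0$, split off the eigenvalues $-\mu$ and $-(\mu+u_1)$, apply Routh--Hurwitz to the infected $2\times 2$ block with $\det J_I=a_2=d_3(d_1-\alpha\lambda)(1-\mathcal{R}_0)$), only sharper: your identity $-\operatorname{tr}J_I=d_3(1-\mathcal{R}_0^A)+(d_1-\alpha\lambda)$ shows that the paper's separate condition on $a_1$ already follows from $\mathcal{R}_0<1$ (that condition is also misprinted there, missing the factor $\gamma$), and $\det J_I<0$ gives the instability for $\mathcal{R}_0>1$, which the paper asserts without proof (the borderline case $\mathcal{R}_0=1$, where $\det J_I=0$, is settled by neither). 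One correction to your side remark: with the paper's $F$ and $V$ one has $FV^{-1}=\begin{pmatrix}\mathcal{R}_0^A+\mathcal{R}_0^C & 2\beta/(d_1-\alpha\lambda)\\ 0 & 0\end{pmatrix}$, so only the off-diagonal entry is misprinted and $\rho(FV^{-1})=\mathcal{R}_0^A+\mathcal{R}_0^C$ exactly; the detour through the van den Driessche--Watmough equivalence is therefore unnecessary, though harmless since your argument never relies on it.
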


\begin{proof}
The general Jacobian matrix \( J \) of the above system (\ref{model eqs.}) is given by:

\[
J =
\begin{bmatrix}
\frac{\partial f_1}{\partial S} & \frac{\partial f_1}{\partial A} & \frac{\partial f_1}{\partial C} & \frac{\partial f_1}{\partial R} \\
\frac{\partial f_2}{\partial S} & \frac{\partial f_2}{\partial A} & \frac{\partial f_2}{\partial C} & \frac{\partial f_2}{\partial R} \\
\frac{\partial f_3}{\partial S} & \frac{\partial f_3}{\partial A} & \frac{\partial f_3}{\partial C} & \frac{\partial f_3}{\partial R} \\
\frac{\partial f_4}{\partial S} & \frac{\partial f_4}{\partial A} & \frac{\partial f_4}{\partial C} & \frac{\partial f_4}{\partial R}
\end{bmatrix}
\]

Incorporating the model equations, we get:

\[
J =
\begin{bmatrix}
-\gamma A - \dfrac{2\beta C^2}{(C+S)^2} - u_1 - \mu & -\gamma S & - \lambda \alpha -\dfrac{2\beta S^2}{(C+S)^2} & 0 \\
\gamma A + \dfrac{2\beta C^2}{(C+S)^2} & \gamma S - (\sigma + \theta + \mu) & \dfrac{2\beta S^2}{(C+S)^2} & 0 \\
0 & \theta & \lambda \alpha - \mu - \rho - \dfrac{r u_2}{1 + \phi^2} & 0 \\
u_1 & \sigma & \dfrac{r u_2}{1 + \phi^2} & -\mu
\end{bmatrix}
\]

Jacobian matrix \( J \) at DFE \( E^0 = \left( \dfrac{\lambda}{\mu + u_1}, 0, 0, \dfrac{\lambda u_1}{\mu + u_1} \right) \) becomes:

\[
J =
\begin{bmatrix}
-(\mu + u_1) & -\dfrac{\gamma \lambda}{\mu + u_1} & -(2 \beta + \lambda \alpha) & 0 \\
0 & \dfrac{\gamma \lambda}{\mu + u_1} - (\sigma + \mu + \theta) & 2 \beta & 0 \\
0 & \theta & \lambda \alpha - \mu - \rho - \dfrac{r u_2}{1 + \phi^2} & 0 \\
u_1 & \sigma & \dfrac{r u_2}{1 + \phi^2} & -\mu
\end{bmatrix}
\]
Observing the above matrix, we can easily figure out that -$\mu$, -$(u_1 + \mu)$ are eigenvalues of this Jacobian matrix.

Now, the Jacobian matrix \( J \) can be rewritten as:
\[
J =
\begin{bmatrix}
\dfrac{\gamma \lambda}{(u_1 + \mu)} - (\sigma + \theta + \mu) & 2\beta \\
\theta & \alpha \lambda - \left(\rho + \mu + \dfrac{r u_2}{1 + \phi^2}\right)
\end{bmatrix}
\]

The characteristic polynomial of the above Jacobian matrix \( J \) can be written as:

\[
x^2 + a_1 x
+ a_2 = 0
\]
where \begin{align*} 
 a_1 &= \left((\sigma + \theta + \mu) + (\rho + \mu + \dfrac{r u_2}{1 + \phi^2}) - \dfrac{\lambda}{(\mu + u_1)} - \alpha \lambda \right) \\ 
 a_2 &= \left\{ \left( \dfrac{\gamma \lambda}{(\mu + u_1)} - (\sigma + \theta + \mu) \right) \left(\alpha \lambda - (\rho + \mu + \dfrac{r u_2}{1 + \phi^2}) \right) - 2\beta \theta \right\} \\ 
 &=(\sigma + \theta + \mu) \left(\rho + \mu + \dfrac{r u_2}{1 + \phi^2} - \alpha \lambda \right) (1 - \mathcal{R}_0) 
 \end{align*}



\newtcolorbox{theoremBox}[1][]{
  colback=blue!5!white,
  colframe=blue!75!black,
  fonttitle=\bfseries,
  title=Conditions for Positivity of Coefficients,
  #1
}

\begin{theoremBox}
Let \( a_1 \) and \( a_2 \) be expressions defined in the above characteristic polynomial in \(x\). Then:

\begin{equation} \label{positivity of a_i's}
a_i > 0 \quad \text{if} \quad
\begin{cases}
(\sigma + \theta + \mu) + \left( \rho + \mu + \dfrac{r u_2}{1 + \phi^2} \right) 
> \dfrac{\lambda}{\mu + u_1} + \alpha \lambda, & \text{for } i = 1, \\[10pt]

\mathcal{R}_0 < 1 \quad \text{and} \quad 
\rho + \mu + \dfrac{r u_2}{1 + \phi^2} > \alpha \lambda, & \text{for } i = 2.
\end{cases}
\end{equation}
\end{theoremBox}

Routh-Hurwitz criteria \cite{hurwitz1964conditions, dejesus1987routh, patil2021routh, lancaster1985theory} for the two-degree equation \(x^2 + a_1 x + a_2 = 0\) to have negative roots are:
\[a_1 > 0 \quad \text{and} \quad a_2 > 0.\]
Thus, according to the Routh-Hurwitz criteria, the model (\ref{model eqs.}) is locally asymptotically stable at $E^0$ if (\ref{positivity of a_i's}) is satisfied.

\end{proof}

\begin{theorem}[Global Stability at Disease-Free Equilibrium (DFE)] Model (\ref{model eqs.}) is globally asymptotically stable at \(E^0 = (S^0, A^0, C^0, R^0)\) if $\mathcal{R}_0 < 1$, and unstable if $\mathcal{R}_0 > 1$.
\end{theorem}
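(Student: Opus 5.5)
The plan is a linear Lyapunov function on the infected compartments $(A,C)$, built from the left Perron eigenvector of the next-generation matrix $FV^{-1}$ computed above, combined with LaSalle's invariance principle and a comparison argument for $S$. We work on the positively invariant region $\Omega=\{(S,A,C,R): S,A,C,R\ge 0,\ N\le \lambda/\mu\}$ from the positivity and boundedness theorems. We also assume $d_1=\rho+\mu+\frac{ru_2}{1+\phi^2}>\alpha\lambda$, the same condition already needed for $\mathcal{R}_0^C>0$ and in (\ref{positivity of a_i's}). The first step is to show that $\limsup_{t\to\infty}S(t)\le S^0=\lambda/(\mu+u_1)$. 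Since $\lambda(1-\alpha C)\le\lambda$ and the incidence terms are nonnegative, $\dot S\le \lambda-(\mu+u_1)S$, and a standard comparison gives the bound. Hence, for any $\varepsilon>0$, after some finite time we have $S\le S^0+\varepsilon$. Alternatively, one can simply restrict to the set where $S\le S^0$, which attracts all trajectories.

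Next define
\[
L(t)=A(t)+\frac{2\beta}{d_1-\alpha\lambda}\,C(t).
\]
Here the weights $(1,\,2\beta/(d_1-\alpha\lambda))$ form a left eigenvector of $V^{-1}$-weighted $F$, chosen so that the $A$-coefficient reproduces $\mathcal{R}_0$. Using $\frac{2SC}{S+C}\le 2C$, which holds because $S/(S+C)\le 1$, we get
\[
\dot L\le \gamma SA+2\beta C-d_3A+\frac{2\beta}{d_1-\alpha\lambda}\big(\theta A-(d_1-\alpha\lambda)C\big)
= A\Big(\gamma S-d_3+\frac{2\beta\theta}{d_1-\alpha\lambda}\Big).
\]
With $S\le S^0$ the bracket is at most $d_3(\mathcal{R}_0^A+\mathcal{R}_0^C-1)=d_3(\mathcal{R}_0-1)<0$, so $\dot L\le -d_3(1-\mathcal{R}_0)A\le 0$. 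The $C$ terms cancel exactly, which is the point of the chosen weight.

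The largest invariant set where $\dot L=0$ has $A\equiv 0$. Then $\dot A=0$ forces $\frac{2\beta SC}{S+C}=0$. Since $S$ is bounded below by a positive quantity near the DFE, this gives $C=0$; alternatively, $\dot C=-(d_1-\alpha\lambda)C$ already drives $C\to0$. On $\{A=C=0\}$ the $S$ and $R$ equations are linear: $S\to S^0$ and $R\to u_1S^0/\mu=R^0$. By LaSalle's principle, together with the local stability from Theorem \ref{Theorem1}, $E^0$ is globally asymptotically stable in $\Omega$. For instability when $\mathcal{R}_0>1$, it suffices to note $a_2<0$ in the characteristic polynomial, which gives a positive real eigenvalue.

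The main obstacle is the justification that $S\le S^0$ along trajectories, since $\dot L\le0$ requires it. One issue is that the recruitment term $\lambda(1-\alpha C)$ could become negative, if $\alpha C>1$, and this affects positivity rather than the bound. I would handle this either by working in $\Omega_\varepsilon=\{S\le S^0+\varepsilon\}$ with $\varepsilon$ small enough that $\mathcal{R}_0(\varepsilon)<1$ and then letting trajectories enter it, or via the asymptotically autonomous/limit-set argument. A secondary technical point is that the harmonic-mean incidence is not defined at $S+C=0$; it extends continuously by $0$ there.
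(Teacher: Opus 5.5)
Your proof is correct, under the hypothesis $d_1=\rho+\mu+\frac{ru_2}{1+\phi^2}>\alpha\lambda$ that the statement genuinely needs, and it takes a different route from the paper.

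\textbf{The paper's argument.} The paper uses $L=S+A+(C-C^0\ln C)+R$. Since $C^0=0$, this is just the total population $N$, and the paper asserts $\dot L=\lambda-\mu N-\rho C<0$ without ever invoking $\mathcal{R}_0<1$. That step does not hold: $\dot N>0$ whenever $\mu N+\rho C<\lambda$. Moreover, $N(E^0)=S^0+R^0=\lambda/\mu$ is the \emph{maximum} of $N$ on $\Omega$, so $N$ cannot serve as a Lyapunov function for $E^0$. The instability half is not addressed in that proof either.

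\textbf{Your argument.} You combine four ingredients:
\begin{itemize}
\item the comparison bound $\limsup S\le S^0$;
\item LaSalle on the compact, positively invariant set $\Omega\cap\{S\le S^0+\varepsilon\}$, with $\varepsilon$ small enough that $\gamma(S^0+\varepsilon)/d_3+\mathcal{R}_0^C<1$;
\item the function $A+\frac{2\beta}{d_1-\alpha\lambda}C$, whose weight cancels the $C$-terms after the bound $\frac{2SC}{S+C}\le 2C$;
\item $a_2=d_3(d_1-\alpha\lambda)(1-\mathcal{R}_0)<0$ for instability.
\end{itemize}
This is what actually makes the threshold enter. The simplicity a single whole-population function would buy (no separate treatment of $S$) is illusory, because such a function carries no information about $\mathcal{R}_0$. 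Your route costs only the extra step for $S$. Use the $\varepsilon$-enlarged set rather than the ``restrict to $S\le S^0$'' shortcut, since trajectories with $S(0)>S^0$ need not enter $\{S\le S^0\}$ in finite time.

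\textbf{Three small repairs.}
\begin{itemize}
\item The weights $\bigl(1,\,2\beta/(d_1-\alpha\lambda)\bigr)$ are not the left Perron vector of $FV^{-1}$; that vector is $\bigl(1,\,2\beta/((d_1-\alpha\lambda)\mathcal{R}_0)\bigr)$. Yours are simply the cancellation weights, which work equally well, so only the stated motivation should change.
\item On the invariant subset of $\{A=0\}$, replace ``$S$ is bounded below near the DFE'' with a backward-time argument. There $C(t)=C(0)e^{-(d_1-\alpha\lambda)t}$ for all $t\in\mathbb{R}$, and boundedness as $t\to-\infty$ forces $C\equiv 0$. The same reasoning applied to the linear $S$ and $R$ equations then gives $S\equiv S^0$ and $R\equiv R^0$.
\item The positive invariance of $\Omega$ that you import requires $\lambda(1-\alpha C)\ge 0$ on $\Omega$, e.g.\ $\alpha\lambda\le\mu$. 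Otherwise $\dot S<0$ at $S=0$ and solutions leave the orthant. The paper's positivity proof assumes this silently, so state it as an explicit hypothesis.
\end{itemize}
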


\begin{proof}
Let us consider the Lyapunov function \cite{korobeinikov2006lyapunov, zarin2021fractional} L as
\[L(S, A, C, R) = S + A + \left(C - C^0 \ln C \right) + R\]

Then, differentiating L with respect to the time t, we get

\begin{align*}
\frac{dL}{dt} &= \frac{dS}{dt} + \frac{dA}{dt} + \left(1 - \dfrac{C^0}{C} \right) \frac{dC}{dt} + \frac{dR}{dt} \\
\frac{dL}{dt} &= \lambda - \mu N - \rho C - \frac{C^0}{C} \left(\alpha \lambda C - \rho C - \mu C - \dfrac{r u_2 C^0}{1 + \phi^2} + \theta A \right) \\
\frac{dL}{dt} &= \lambda - \mu (S + A + R) - \rho C \left( 1 - \dfrac{C^0}{C} \right) - \mu C \left(1 - \dfrac{C^0}{C} \right) - \alpha \lambda C^0 - \frac{\theta A C^0}{C} + \dfrac{r u_2 C^0}{1 + \phi^2} \\
\frac{dL}{dt} &= -\left( \mu N^* + \rho C (1 - \dfrac{C^0}{C}) + \mu C (1 - \dfrac{C^0}{C}) + \alpha \lambda C^0 + \frac{\theta A C^0}{C} - \dfrac{r u_2 C^0}{1 + \phi^2} - \lambda \right) \\
\frac{dL}{dt} &< 0
\end{align*}

Therefore, an application of the LaSalle’s invariance principle \cite{la1976stability, lasalle1976stability} yields that the disease-free equilibrium $E^0$ is globally asymptotically stable.
\end{proof}

\begin{theorem}[Global Stability at Disease-Free Equilibrium (DFE)]
\label{Theorem2}
Let us consider the system defined by the dynamics of susceptible \( S \), acute \( A \), and chronic \( C \) compartments. In order to establish the global stability of the disease-free equilibrium, we construct the Lyapunov function:
\begin{align}
L(S,A,C) = d_1 (S - S^0) + d_2 A + d_3 C,
\end{align}
which is linear, positive definite, and radially unbounded. 

Then, the disease-free equilibrium \[E^0 = (S^0, A^0, C^0, R^0)\]
is globally asymptotically stable if \( \mathcal{R}_0 < 1 \).
\end{theorem}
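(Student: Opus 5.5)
The plan is to reduce the problem to the infected subsystem $(A,C)$ and then recover $S$ and $R$ from limiting equations. The weights in the linear function $L$ of Theorem~\ref{Theorem2} are free constants, unrelated to the earlier $d_1,d_2,d_3$ of the endemic-equilibrium section. I will take $d_2=1$ and
\[
d_3=\frac{2\beta}{k},\qquad k:=\rho+\mu+\frac{r u_2}{1+\phi^2}-\alpha\lambda>0 .
\]
The positivity of $k$ is the standing assumption already used in the definition of $\mathcal{R}_0$ and in the positivity theorem. The $S$-term is the delicate part, because $\frac{d}{dt}(S-S^0)$ has no sign. I would therefore handle it separately: either set its weight $d_1=0$, or equivalently control $S$ by a comparison argument.

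\textbf{Step 1: an asymptotic bound on $S$.} Since all terms subtracted in the $S$-equation are nonnegative (by the positivity theorem), we have
\[
\frac{dS}{dt}\le \lambda-(\mu+u_1)S,
\]
so $\limsup_{t\to\infty}S(t)\le S^0=\lambda/(\mu+u_1)$. Fix $\varepsilon>0$ small enough that $\mathcal{R}_0^\varepsilon<1$, where $\mathcal{R}_0^\varepsilon$ is $\mathcal{R}_0$ with $S^0$ replaced by $S^0+\varepsilon$. Such an $\varepsilon$ exists because $\mathcal{R}_0<1$ strictly and $\mathcal{R}_0$ depends continuously on $S^0$. Then $S(t)\le S^0+\varepsilon$ for all $t\ge T$. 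For the harmonic-mean incidence I would use the elementary bound $\frac{2SC}{S+C}\le 2C$, which is valid for $S,C>0$.

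\textbf{Step 2: the Lyapunov computation.} For $t\ge T$, with $W=A+\frac{2\beta}{k}C$, I expect
\[
\frac{dW}{dt}\le \Big(\gamma(S^0+\varepsilon)-(\sigma+\mu+\theta)+\frac{2\beta\theta}{k}\Big)A+\Big(2\beta-\frac{2\beta}{k}\,k\Big)C .
\]
The $C$-coefficient vanishes by the choice of $d_3$. The $A$-coefficient equals $(\sigma+\mu+\theta)(\mathcal{R}_0^\varepsilon-1)<0$. So $W$ is nonincreasing, and
\[
\int_T^\infty A\,dt<\infty .
\]
Since $A'$ is bounded (all states are bounded by the boundedness theorem), Barbalat's lemma gives $A\to0$. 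Then the linear equation $C'=\theta A-kC$ with $k>0$ gives $C\to0$. Alternatively, one can apply LaSalle's invariance principle on the compact invariant region $\{N\le\lambda/\mu\}$: the largest invariant set with $A=0$ forces $C=0$ as well.

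\textbf{Step 3: the remaining compartments.} With $A,C\to0$, the $S$-equation is asymptotically autonomous with limit equation $S'=\lambda-(\mu+u_1)S$, so $S\to S^0$. Similarly, the $R$-equation tends to $R'=u_1S^0-\mu R$, giving $R\to R^0$. Local asymptotic stability from Theorem~\ref{Theorem1} combined with this global attractivity yields global asymptotic stability of $E^0$.

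The main obstacle is that $S\le S^0$ holds only asymptotically, and that a genuinely linear $L$ containing $d_1(S-S^0)$ does not have a sign-definite derivative. The $\varepsilon$–$T$ argument together with the strict inequality $\mathcal{R}_0<1$ is what I would rely on to get around this. A secondary point is the need for $k>0$, i.e.\ $\alpha\lambda<\rho+\mu+\frac{ru_2}{1+\phi^2}$, which should be stated explicitly as a hypothesis.
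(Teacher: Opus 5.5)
Your argument is correct, and it takes a genuinely different route from the paper.

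The paper keeps the $S$-term, takes $d_1=d_2=u_1+\mu$ and $d_3=\sigma+\theta+\mu$, and after a direct computation asserts $\frac{dL}{dt}<0$ and invokes LaSalle. But its final expression contains $-(u_1+\mu)^2(S-S^0)$, which is positive whenever $S<S^0$, and also $+\theta(\sigma+\theta+\mu)A$, so the sign claim does not follow. In fact, once the sign of $\gamma AS$ in the $A$-equation is corrected and $d_1=d_2$, both incidence terms cancel identically, so $\mathcal{R}_0$ cannot enter that derivative at all. Moreover, $L$ is negative at points with $S<S^0$ and $A=C=0$, so it is not positive definite.

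Your three choices are exactly what make the threshold appear, as the coefficient $(\sigma+\theta+\mu)(\mathcal{R}_0^\varepsilon-1)$ of $A$:
zero weight on $S$;
the ratio $2\beta/k$, so that $\frac{2SC}{S+C}\le 2C$ kills the $C$-coefficient;
and $\limsup S\le S^0$ with an $\varepsilon$-margin.

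The price is that $W=A+\frac{2\beta}{k}C$ is only positive semidefinite, so your Step 3 and the appeal to local stability are genuinely needed. Local stability holds under $\mathcal{R}_0<1$ and $k>0$ alone: the infected block of the Jacobian at $E^0$ has trace $\gamma S^0-(\sigma+\theta+\mu)-k<0$ and determinant $(\sigma+\theta+\mu)k(1-\mathcal{R}_0)>0$.

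You are right to make $k>0$ an explicit hypothesis. For $k\le 0$ the DFE is linearly unstable, even though the formal $\mathcal{R}_0$ may then be below one or undefined.

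One caveat comes from the paper's positivity theorem. Nonnegativity of $S$, which you use in Steps 1 and 2, needs $\lambda(1-\alpha C)\ge 0$ on the face $S=0$. So the result is really global on the positively invariant region $\{S,A,C,R\ge 0,\ N\le\lambda/\mu\}$ under a condition such as $\alpha\lambda\le\mu$, not on the whole positive orthant.
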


\begin{proof}
Consider the time derivative of the Lyapunov function:

\[
\frac{dL}{dt} = d_1 \frac{dS}{dt} + d_2 \frac{dA}{dt} + d_3 \frac{dC}{dt}
\]

Substituting the first three equations of model (\ref{model eqs.}), we get:

\begin{align*}
\frac{dL}{dt} &= d_1 \left( \lambda(1 - \alpha C) - \frac{2 \beta S C}{S + C} - \gamma A S - u_1 S - \mu S \right) \\
&\quad + d_2 \left( \frac{2 \beta S C}{S + C} - \gamma A S- (\sigma + \theta + \mu) A \right) \\
&\quad + d_3 \left( \theta A + \alpha \lambda C - \rho C - \frac{r u_2 C}{1 + \phi^2} - \mu C \right)
\end{align*}

Let all parameters \(d_1, d_2, d_3, d_4\) be positive and biologically feasible. We define the constants as following:
\[
d_1 = d_2 = u_1 + \mu, \quad
d_3 = \sigma + \theta + \mu, \quad
d_4 = \rho + \mu + \frac{r u_2}{1 + \phi^2} - \alpha \lambda
\]

\begin{align*}
\frac{dL}{dt} 
&= (u_1 + \mu) \lambda - (u_1 + \mu)^2 S - (u_1 + \mu) \alpha \lambda C - \gamma \lambda A + \gamma \lambda A - (u_1 + \mu)(\sigma + \theta + \mu) A \\
&\quad + \theta (\sigma + \theta + \mu) A 
+ (\sigma + \theta + \mu) \left(\alpha \lambda - \rho - \mu - \frac{r u_2}{1 + \phi^2} \right) C + 2 \beta \theta C - 2 \beta \theta
\end{align*}

\begin{align*}
\frac{dL}{dt} 
&= (u_1 + \mu) \lambda - (u_1 + \mu)^2 S - (u_1 + \mu) \alpha \lambda C +\left(\frac{\gamma \lambda}{(u_1 + \mu)(\sigma + \theta + \mu)} - 1\right) (u_1 + \mu)(\sigma + \theta + \mu) A - \gamma \lambda A + \theta (\sigma + \theta + \mu) A \\
&\quad + \left(\frac{2 \beta \theta}{\left( \rho + \mu + \frac{r u_2}{1 + \phi^2} - \alpha \lambda \right)(\sigma + \theta + \mu)} - 1\right) \left( \rho + \mu + \frac{r u_2}{1 + \phi^2} - \alpha \lambda \right)(\sigma + \theta + \mu) C - 2 \beta \theta C
\end{align*}

Since \(S^0 = \frac{\lambda}{u_1 + \mu} \Rightarrow \lambda = (u_1 + \mu) S^0\), then substituting the values $\lambda$ and $d_i's$ where $i = 1,2,3,4$, we get: 

\begin{align*}
\frac{dL}{dt} = d_1^2 S_0 - d_1^2 S - \alpha \lambda d_1 C + (\mathcal{R}_0^A - 1) d_1 d_3 A +(\mathcal{R}_0^C - 1) d_3 d_4 C + \theta d_3 A - 2 \beta \theta C 
\end{align*}

\begin{align}
\label{Lyapunov1}
\frac{dL}{dt} = - (S - S^0) d_1^2 - \alpha \lambda d_1 C + (\mathcal{R}_0^A - 1) d_1 d_3 A +(\mathcal{R}_0^C - 1) d_3 d_4 C + \theta d_3 A - 2 \beta \theta C
\end{align}

Here, \( \mathcal{R}_0^A \) and \( \mathcal{R}_0^C \) denote the reproduction numbers corresponding to the acute and chronic infection transmission, respectively. 

If \( \mathcal{R}_0 < 1\) $\Rightarrow$ \(0 < \mathcal{R}_0^A < 1 \) and \(0 < \mathcal{R}_0^C < 1 \), then \( \mathcal{R}_0^A - 1 < 0 \) and \( \mathcal{R}_0^C - 1 < 0 \).

It is evident from equation (\ref{Lyapunov1}) that \(\frac{dL}{dt} < 0\) and \(\frac{dL}{dt} = 0\) if and only if \(S = S^0, A = A^0, C = C^0, R = R^0 \).

Hence, the largest invariant set where \( \frac{dL}{dt} = 0 \) corresponds to the disease-free equilibrium $E^0$. Therefore, by LaSalle’s Invariance Principle \cite{la1976stability, lasalle1976stability}, the disease-free equilibrium 
\(E^0 = (S^0, A^0, C^0, R^0)\) is globally asymptotically stable.
\end{proof}

\begin{theorem}[Local Stability at Endemic Equilibrium (EE)]
\label{Theorem3}
Model (\ref{model eqs.}) is locally asymptotically stable at \(E^* = (S^*, A^*, C^*, R^*)\) if $\mathcal{R}_0 > 1$ and, otherwise, is unstable.
\end{theorem}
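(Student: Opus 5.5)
The plan is to linearize model (\ref{model eqs.}) at $E^*$ and apply the Routh--Hurwitz criteria, in the same way as for Theorem \ref{Theorem1}. First I would use the fact that the equation for $R$ decouples: $R$ does not appear in the first three equations. The Jacobian at $E^*$ is therefore block lower-triangular. This gives the eigenvalue $-\mu$, and the remaining eigenvalues are those of the $3\times 3$ Jacobian $J_3$ of the $(S,A,C)$ subsystem at $(S^*,A^*,C^*)$.

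Next I would simplify $J_3$ using the equilibrium relations. The second equation gives
\[
\gamma S^* - (\sigma+\mu+\theta) = -\frac{2\beta S^*C^*}{A^*(S^*+C^*)} < 0 ,
\]
so the $(2,2)$ entry is strictly negative. The third equation gives
\[
\alpha\lambda - \Big(\mu+\rho+\frac{ru_2}{1+\phi^2}\Big) = -\frac{\theta A^*}{C^*} < 0 ,
\]
so the $(3,3)$ entry is strictly negative. The $(1,1)$ entry, $-\gamma A^* - 2\beta C^{*2}/(S^*+C^*)^2 - u_1 - \mu$, is also negative.

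Write the characteristic polynomial as $x^3 + b_1x^2 + b_2x + b_3$. Then $b_1 = -\operatorname{tr}J_3 > 0$ holds automatically. Next I would expand $b_2$ (the sum of the principal $2\times 2$ minors) and $b_3 = -\det J_3$. In doing so I would group the terms so that the mixed products, such as $\gamma S^*\cdot(\gamma A^* + \dots)$ and $\theta\cdot 2\beta S^{*2}/(S^*+C^*)^2$, are matched against the diagonal products. The tool for this matching is the equilibrium identities together with the fact that $S^* < S^0$.

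The point where $\mathcal{R}_0>1$ should enter is the existence and positivity of $E^*$. The explicit formula for $C^*$ obtained in the section on the endemic equilibrium point is positive precisely in the regime $\mathcal{R}_0>1$, and I would invoke that to justify $S^*,A^*,C^*>0$. Instability for $\mathcal{R}_0<1$ is then partly vacuous, since $E^*$ is not feasible there; for the borderline behaviour I would appeal to the sign change of $a_2$ found in the proof of Theorem \ref{Theorem1}.

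The main obstacle will be establishing $b_3>0$ and the Hurwitz condition $b_1b_2-b_3>0$. Because the incidence is the harmonic-mean term, the off-diagonal entries $\partial f_1/\partial C$ and $\partial f_2/\partial C$ contain both $S^{*2}$ and $C^{*2}$ pieces. The vertical-transmission term $-\lambda\alpha$ in the $(1,3)$ entry also has no counterpart elsewhere in the matrix, so cancellations are not automatic.

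For $b_3$ I would first try adding the first row of $J_3$ to the second. This kills the incidence terms in row 2, and the determinant then factors as a sum of products of same-sign quantities. For $b_1b_2-b_3$ I would aim to show that every term in $b_3$ is dominated by a term of $b_1b_2$ containing an extra diagonal factor. If full generality fails, I would state the result under explicit sufficient conditions on the coefficients, in the spirit of the condition box (\ref{positivity of a_i's}), for example requiring $\alpha\lambda$ to be small relative to $\mu+u_1$.
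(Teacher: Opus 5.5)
Your route differs from the paper's, and yours is the sound one. The paper row-reduces the $3\times 3$ Jacobian to upper-triangular form and reads $-L_1,-L_5,-L_7$ off the diagonal as eigenvalues. That step is not legitimate, because row operations preserve the determinant but not the spectrum. Its final sign conditions ($\mathcal{R}_0^A<1$, $\mathcal{R}_0^C>1$, $d_3>l_2$) are also not the hypothesis $\mathcal{R}_0>1$. Routh--Hurwitz on the cubic is the right tool, and your first steps go through. Put $m=u_1+\mu$, $d_1=\rho+\mu+\frac{ru_2}{1+\phi^2}$, $d_3=\sigma+\theta+\mu$, $l_1=\gamma A^*+\frac{2\beta C^{*2}}{(S^*+C^*)^2}$, $a=d_3-\gamma S^*>0$ and $c=d_1-\alpha\lambda=\theta A^*/C^*>0$. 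Also put $D=ac-\frac{2\beta\theta S^{*2}}{(S^*+C^*)^2}=\frac{2\beta\theta S^*C^*}{(S^*+C^*)^2}>0$; this equality is where the equilibrium identities enter. Then
\[
b_1=l_1+m+a+c,\qquad b_2=l_1d_3+ma+(l_1+m)c+D,\qquad b_3=l_1d_3c+mD+\alpha\lambda\theta\, l_1 ,
\]
so $b_1,b_2,b_3>0$. Adding row 1 to row 2 is a valid way to obtain $b_3$ in this form, since only the determinant is needed there.

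The gap is the Hurwitz inequality, and it cannot be closed because it is false in general:
\[
b_1b_2-b_3=(l_1+m+a)\,l_1d_3+(l_1+m+a+c)\bigl(ma+(l_1+m)c\bigr)+(l_1+a+c)D-\alpha\lambda\theta\, l_1 .
\]
Your domination idea handles $l_1d_3c$ (via $c\cdot l_1d_3$) and $mD$ (via $m\cdot D$). Nothing dominates $\alpha\lambda\theta\, l_1$, which comes from the loop $S\to A\to C\to S$ closed by the vertical-transmission entry $-\alpha\lambda$.

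To see this, take $\gamma=0$. The equilibrium equations then give $S^*/(S^*+C^*)=1/\mathcal{R}_0^C$ and $a=d_3$. Hence $l_1=2\beta(1-1/\mathcal{R}_0^C)^2$, $D$, $c$ and $\mathcal{R}_0$ stay fixed if you raise $\alpha$ and raise $\rho$ by $\lambda$ times the same amount, while $\alpha\lambda\theta l_1\to\infty$. Concretely, take $\beta=\theta=1$, $\mu=0.01$, $\sigma=u_1=u_2=\gamma=0$, $\lambda=100$, $\alpha=0.1$ and $\rho=9.99+1/1.01$. This gives $\mathcal{R}_0=2$, $S^*=C^*=100/11.01$, and characteristic polynomial $x^3+2.510x^2+1.520x+5.505$. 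Here $b_1b_2-b_3\approx-1.69$, and the roots are approximately $-2.70$ and $0.096\pm1.42i$. By continuity the same holds for small positive $\gamma,\sigma,u_1,u_2$.

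So $E^*$ can be unstable with $\mathcal{R}_0>1$. The statement as written is false, and neither your plan nor the paper's can prove it. Your fallback is therefore necessary, and the condition you guessed suffices. If $\alpha\lambda\le u_1+\mu$, then $(l_1+m+a)l_1d_3>m\theta l_1\ge\alpha\lambda\theta l_1$ because $d_3>\theta$. Hence $b_1b_2>b_3$, and $E^*$ is locally asymptotically stable whenever it exists; this condition holds for the paper's tabulated parameters.

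Finally, do not rely on the paper's closed form for $C^*$. Its derivation has algebraic slips; for example, the relation between $S$ and $C$ drops an $\alpha\lambda d_3C/\theta$ term and a factor $1/(u_1+\mu)$. Instead, eliminate $A^*$ from the $A$- and $C$-equations to get $\mathcal{R}_0^A\frac{S^*}{S^0}+\mathcal{R}_0^C\frac{S^*}{S^*+C^*}=1$. Together with $S^*<S^0$, this shows that a positive $E^*$ forces $\mathcal{R}_0>1$, which makes the ``otherwise'' clause vacuous. The DFE coefficient $a_2$ says nothing about $E^*$, so you do not need it for that clause.
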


\begin{proof}
The Jacobian matrix of the system (\ref{model eqs.}) at the endemic equilibrium point \(E^* = (S^*, A^*, C^*, R^*)\) is as follows:
\[
J = \begin{bmatrix}
-\gamma A^* - \frac{2 \beta (C^*)^2}{(S^* + C^*)^2} - u_1 - u & -\gamma S^* & -\alpha \lambda - \frac{2 \beta (S^*)^2}{(S^* + C^*)^2} \\
\gamma A^* + \frac{2 \beta (C^*)^2}{(S^* + C^*)^2} & \gamma S^* - (\sigma + \mu +\theta)) & \frac{2 \beta (S^*)^2}{(S^* + C^*)^2} \\
0 & \theta & \alpha \lambda - \rho - \mu - \frac{r u_2}{1 + \phi^2}
\end{bmatrix}
\]

Let us define the following substitutions:
\begin{align*}
l_1 &= \gamma A^* + \frac{2 \beta (C^*)^2}{(S^* + C^*)^2}, \quad l_2 = \gamma S^*, \quad l_3 = \frac{2 \beta (S^*)^2}{(S^* + C^*)^2} \\
d_1 &= \rho + \mu + \frac{r u_2}{1 + \phi^2}, \quad d_3 = \sigma + \theta + \mu 
\end{align*}

The above matrix can be rewritten as:
\[
J = \begin{bmatrix}
-(l_1 + u_1 + \mu) & - l_2 & -(\alpha \lambda + l_3) \\
l_1 & l_2 - d_3 & l_3 \\
0 & \theta & \alpha \lambda - d_1
\end{bmatrix}
\]

Further, let:
\begin{align*}
L_1 &= (l_1 + u_1 + \mu) \\
L_2 &= l_2 - d_3 \\
L_3 &= \alpha \lambda + l_3 \\
L_4 &=\alpha \lambda - d_1
\end{align*}

Then:
\[
J = \begin{bmatrix}
-L_1 & - l_2 & -L_3 \\
l_1 & L_2 & l_3 \\
0 & \theta & L_4
\end{bmatrix}
\]

Applying the row operation $R_2 \rightarrow R_2 + \frac{l_1}{L_1}R_1$:

\[
J = \begin{bmatrix}
-L_1 & - l_2 & -L_3 \\
0 & L_1 L_2 - l_1 l_2 & l_3 L_1 - l_1 L_3 \\
0 & \theta & L_4
\end{bmatrix}
\]

Finally, defining:
\begin{align*}
L_5 &= l_1 l_2 - L_1 L_2 \\
L_6 &= l_1 L_3 - l_3 L_1
\end{align*}

The reduced matrix becomes:
\[
J = \begin{bmatrix}
-L_1 & - l_2 & -L_3 \\
0 & -L_5 & -L_6 \\
0 & \theta & L_4
\end{bmatrix}
\]

Applying the row operation $R_3 \rightarrow R_3 + \frac{\theta}{L_5}R_2$:

\[
J = \begin{bmatrix}
-L_1 & - l_2 & -L_3 \\
0 & -L_5 & -L_6 \\
0 & 0 & -L_7
\end{bmatrix}
\]
where \[L_7 = \theta L_6 - L_4 L_5 \]

Using the upper triangular form of the matrix, we can directly read off the eigenvalues as
\[
\lambda_1 = -L_1, \quad \lambda_2 = -L_5, \quad \text{and} \quad \lambda_3 = -L_7.
\]
For the endemic equilibrium to be stable, all of these eigenvalues must have negative real parts. This leads to a simple condition: each of the terms $L_1$, $L_5$, and $L_7$ must be greater than zero. In other words, the stability of the endemic point requires that
\[
L_i > 0 \quad \text{for} \quad i = 1,\,5,\,7.
\]


Here 
\begin{align*}
L_1 &= (l_1 + u_1 + \mu) > 0 \\
L_5 &= l_1 l_2 - L_1 L_2 > 0 \quad if \quad \mathcal{R}_0^A < 1 \quad \text{and} \quad \mathcal{R}_0^C > 1\\
L_7 &= \theta L_6 - L_4 L_5 > 0 \quad if \quad \mathcal{R}_0^A < 1 \quad \text{and} \quad d_3 > l_2.\\
\end{align*}

\end{proof}

\newtcbtheorem{mytheo}{}%
{colback=red!5!white,
 colframe=yellow!75!black,
 fonttitle=\bfseries,
 coltitle=black,
 enhanced,
 sharp corners,
 breakable,
 before skip=10pt, after skip=10pt}%
{th}

\begin{mytheo}{Positivity for \(L_5\)}{} 
Let
\[
l_1 = \gamma A^* + \frac{2 \beta C^{*2}}{(C^* + S^*)^2}, \quad l_2 = \gamma S^*
\]
and
\[
L_1 = l_1 + u_1 + \mu, \quad L_2 = l_2 - (\sigma + \theta + \mu).
\]
Then
\begin{align*}
L_5 &= l_1 l_2 - L_1 L_2 \\
    &= \left( \gamma A^* + \frac{2 \beta C^{*2}}{(C^* + S^*)^2} \right) \gamma S^* - \left( \lambda_1 + u_1 + \mu \right) \left( \lambda_2 - (\sigma + \theta + \mu) \right) \\
    &= \lambda_1 \lambda_2 - \left[ \lambda_1 \lambda_2 - \lambda_1 (\sigma + \theta + \mu) + (u_1 + \mu)(\sigma + \theta + \mu) - (u_1 + \mu) \lambda_2 \right] \\
    &= - (u_1 + \mu) \lambda_2 + \lambda_1 (\sigma + \theta + \mu) + (u_1 + \mu)(\sigma + \theta + \mu) \\
    &= - (u_1 + \mu) \gamma S^* + \left( \gamma A^* + \frac{2 \beta C^{*2}}{(C^* + S^*)^2} \right)(\sigma + \theta + \mu) + (u_1 + \mu)(\sigma + \theta + \mu)
\end{align*}

Now, use the equilibrium relation:
\[
\gamma S^* = \frac{\lambda}{u_1 + \mu} - \frac{1}{\theta} \left( (\sigma + \theta + \mu) \left( \rho + \mu + \frac{r u_2}{1 + \phi^2} \right) + \alpha \gamma \theta \right) C^*
\]

Substituting into \( L_5 \):
\begin{align*}
L_5 &= - (u_1 + \mu) \cdot \left[ \frac{\lambda}{u_1 + \mu} - \frac{1}{\theta} \left( (\sigma + \theta + \mu) \left( \rho + \mu + \frac{r u_2}{1 + \phi^2} \right) + \alpha \gamma \theta \right) C^* \right] \\
& \quad + \lambda_1 (\sigma + \theta + \mu) + (u_1 + \mu)(\sigma + \theta + \mu) \\
&= - \lambda + \frac{u_1 + \mu}{\theta} \left( (\sigma + \theta + \mu) \left( \rho + \mu + \frac{r u_2}{1 + \phi^2} \right) + \alpha \gamma \theta \right) C^* \\
& \quad + \lambda_1 (\sigma + \theta + \mu) + (u_1 + \mu)(\sigma + \theta + \mu)
\end{align*}

Recall:
\[
R_0^A = \frac{\lambda}{(u_1 + \mu)(\sigma + \theta + \mu)} = \frac{\lambda}{(u_1 + \mu) d_3}
\]

So, we finally get:
\begin{align*}
L_5 &= (u_1 + \mu) d_3 (1 - R_0^A) + \frac{u_1 + \mu}{\theta} \left( (\theta + d_3) \alpha \lambda + 2 \beta \theta \right) - \frac{u_1 + \mu}{\theta} (R_0^C - 1) d_1 d_3
\end{align*}

\[
\therefore \quad L_5 = \lambda_1 \lambda_2 - L_1 L_2 > 0 \quad \text{if} \quad R_0^A < 1 \quad \text{and} \quad R_0^C > 1.
\]
\end{mytheo}

\begin{tcolorbox}[
  colframe=yellow!75!black,
  colback=red!5,
  boxrule=1pt,
  arc=4pt,
  title={2: Calculation for the $L_7$}
]

\[
\begin{aligned}
L_7 &= \theta L_6 - L_4 L_5 \\
    &= \theta(\lambda_1 L_3 - l_3 L_1) - (\alpha \lambda - d_1)(l_1 l_2 - L_1 L_2) \\
    &= \theta(l_1 (\alpha \lambda + l_3) - l_3(l_1 + u_1 + \mu)) - (\alpha \lambda - d_1)\left(l_1 l_2 - (l_1 + u_1 + \mu)(l_2 - d_3)\right) \\
    &= \theta(\alpha \lambda l_1 - (u_1 + \mu) l_3) - (\alpha \lambda - d_1)\left(l_1 d_3 + (u_1 + \mu)(d_3 - l_2)\right) \\
    &= \theta\left(\alpha \lambda l_1 - \gamma \lambda + \gamma \lambda - (u_1+\mu)(\sigma + \theta + \mu)\right) + (d_1 - \alpha \lambda)\left(l_1 d_3 + (u_1 + \mu)(d_3 - l_2)\right) \\
    &= \theta \lambda (\alpha l_1 - \gamma) - \theta (1 - R_0^A)(u_1+\mu)(\sigma+\theta+\mu) + (d_1 - \alpha \lambda)\left(l_1 d_3 + (u_1 + \mu)(d_3 - l_2)\right) \\
\end{aligned}
\]

\(
\therefore L_7 = \theta L_6 - L_4 L_5 > 0 \quad \text{if} \quad R_0^A < 1 \quad \text{and} \quad d_3 > l_2.
\)

\end{tcolorbox}

\begin{theorem}[Global Stability at Endemic Equilibrium (EE)]
\label{Theorem4}
Model (\ref{model eqs.}) is globally asymptotically stable at \(E^* = (S^*, A^*, C^*, R^*)\) if $\mathcal{R}_0 > 1$, and unstable if $\mathcal{R}_0 < 1$.
\end{theorem}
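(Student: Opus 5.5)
We prove global stability of \(E^*\) with a Lyapunov function of Goh–Volterra (logarithmic) type, built on the \((S,A,C)\) subsystem. Since \(R\) does not enter the first three equations of model \eqref{model eqs.}, it suffices to show that \((S,A,C)\to(S^*,A^*,C^*)\). The limit \(R\to R^*\) then follows from the linear equation \(\dot R=\sigma A+\frac{r u_2 C}{1+\phi^2}+u_1S-\mu R\), by the theory of asymptotically autonomous systems. We work in the positively invariant region \(\Omega=\{N\le \lambda/\mu\}\) given by the positivity and boundedness theorems. When \(\mathcal{R}_0>1\) we take \(E^*\) to be the interior equilibrium constructed in the endemic-equilibrium section, with \(S^*,A^*,C^*>0\).

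The candidate function is
\[
L=\Big(S-S^*-S^*\ln\tfrac{S}{S^*}\Big)+\Big(A-A^*-A^*\ln\tfrac{A}{A^*}\Big)+k\Big(C-C^*-C^*\ln\tfrac{C}{C^*}\Big),
\]
where the constant \(k>0\) is to be chosen. We eliminate \(\lambda\), \(\sigma+\mu+\theta\) and \(\mu+\rho+\frac{r u_2}{1+\phi^2}-\alpha\lambda\) using the equilibrium identities (6)–(8). The \(\gamma AS\) terms cancel exactly between \(\dot S\) and \(\dot A\), as in the classical SIR case. Writing \(x=S/S^*\), \(y=A/A^*\), \(z=C/C^*\), the derivative \(\dot L\) should reduce to:
\begin{itemize}
\item a term \(-(\mu+u_1)S^*(x-1)^2/x\);
\item terms of the form \(\frac{2\beta S^*C^*}{S^*+C^*}\,G(x,y,z)\), coming from the harmonic-mean incidence;
\item \(k\theta A^*\big(y-z-\tfrac{y}{z}+1\big)\), from the \(C\)-equation;
\item a term from the vertical transmission \(-\lambda\alpha C\) in \(\dot S\), which contributes \(-\lambda\alpha C^*(z-1)(1-1/x)\).
\end{itemize}
We choose \(k\) so that the coefficients of the linear terms in \(z\) cancel, i.e. \(k\theta A^*=\frac{2\beta S^{*}C^*}{S^*+C^*}\cdot(\text{factor})\). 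The remaining terms should then be grouped into the standard form \(\sum\big(n-\text{AM of ratios}\big)\le0\), using \(1-u+\ln u\le0\) in the style of Korobeinikov.

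The main obstacle is the nonlinear incidence \(\frac{2SC}{S+C}\). Its ratio \(\frac{f(S,C)}{f(S^*,C^*)}=\frac{xz(S^*+C^*)}{S^*x+C^*z}\) is not a product of monomials, so the usual cancellation does not hold directly. We plan to use the monotonicity properties \(f\) increasing in \(S\) and in \(C\), with \(f(S,C)/C\) nonincreasing in \(C\). These give the sign condition
\[
\Big(1-\tfrac{f(S,C^*)}{f(S,C)}\Big)\Big(\tfrac{f(S,C)}{f(S,C^*)}-\tfrac{C}{C^*}\Big)\le0,
\]
as in Korobeinikov's general-incidence Lyapunov theorem, and we split the incidence term accordingly. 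The vertical transmission term \(-\lambda\alpha C^*(z-1)(1-1/x)\) has no definite sign. We intend to absorb it into the \((x-1)^2/x\) term under the condition \(\lambda\alpha C^*\) small relative to \((\mu+u_1)S^*\), the same regime as the assumption \(\lambda\alpha<\mu+\rho+\frac{r u_2}{1+\phi^2}\) already used for positivity. If this absorption fails, we would shift part of the \(\lambda\alpha C\) flux into the \(C\)-equation when forming \(k\).

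Once \(\dot L\le0\) in \(\Omega\), with equality only when \(x=1\) and \(y=z\), we check that the largest invariant subset of \(\{\dot L=0\}\) is \(\{E^*\}\). On that set \(S\equiv S^*\), and the \(S\)-equation then forces \(A=A^*\) and \(C=C^*\). LaSalle's invariance principle then gives global asymptotic stability of \(E^*\) in the interior of \(\Omega\). Instability for \(\mathcal{R}_0<1\) requires no new work: by the global stability of \(E^0\) proved above, \(E^*\) is then not biologically feasible, or at least cannot attract interior solutions.
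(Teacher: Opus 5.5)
\textbf{Gap: $\dot L\le 0$ fails for your $L$.} The problem is the step you leave as ``the remaining terms should then be grouped''. For your $L$ the inequality $\dot L\le 0$ is false in general, so no way of splitting the terms can produce it.

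Write $f(S,C)=2SC/(S+C)$ and $f^*=f(S^*,C^*)$, and take $k\theta A^*=\beta f^*$. This choice cancels the terms linear in $y$. The terms in $z$ cannot be cancelled, because $w/x$ below is not linear in $z$. Carrying out your computation then gives exactly
\[
\dot L=-\big[(u_1+\mu)+\gamma A^*\big]S^*\frac{(x-1)^2}{x}-\lambda\alpha C^*(z-1)\Big(1-\frac1x\Big)+\beta f^*\Big(3-\frac1x-\frac wy-\frac yz+\frac wx-z\Big),
\]
where $w=xz(S^*+C^*)/(S^*x+C^*z)$.

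For bilinear incidence $w=xz$, and the last bracket is the usual AM--GM bracket. Here the residual $\frac wx-z$ survives, and the bracket is not sign-definite. It vanishes to second order at $(1,1,1)$. With $s=S^*/(S^*+C^*)$, its Hessian $H$ there satisfies $\det(-H)=4-4s-2s^2$, which is negative for $s>\sqrt3-1$. So the bracket is positive at points arbitrarily close to $E^*$, and for $s>3/4$ this persists for every choice of $k$.

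The only other sign-definite term depends on $x$ alone. It compensates only if $[(u_1+\mu)+\gamma A^*]S^*$ is large compared with $\beta f^*$. Concretely, take $\lambda=1.01$, $\mu=0.01$, $\beta=10$, $\theta=1$, $\sigma=2.79$, $\rho=4.99$ and $\gamma=\alpha=u_1=u_2=0$ (or small positive values). Then $\mathcal{R}_0=20/19$, $S^*=1$, $C^*=1/19$, $\beta f^*=1$, and $\dot L>0$ near $E^*$ for every $k>0$.

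Korobeinikov's sign condition does not rescue this. It closes the argument only when the logarithmic $S$-term is replaced by $\int_{S^*}^{S}\big(1-f(S^*,C^*)/f(\tau,C^*)\big)\,d\tau$. With that prefactor, the $\gamma AS$ terms no longer cancel: a term $\gamma A^*S^*\,s\,(x-1)\,y$ remains, and no constant weight removes it. Mass-action incidence from $A$ and non-separable harmonic-mean incidence from $C$ demand incompatible $S$-terms. A genuinely different idea, or extra hypotheses in the theorem, is needed.

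\textbf{Vertical transmission.} This is a second, independent problem. Near $E^*$ the term is approximately $-\lambda\alpha C^*(z-1)(x-1)$, which is first order in $x-1$. It therefore cannot be absorbed into $(x-1)^2/x$ alone; you would need a definite $-(z-1)^2$ contribution, which the bracket above does not reliably supply. In any case, a smallness assumption on $\lambda\alpha C^*$ is not part of the statement you are proving.

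\textbf{The paper's proof.} It does not supply the missing idea either. It puts unit weights on all four Volterra terms and merely asserts that each $(1-x^*/x)(f_x(x)-f_x(x^*))$ is nonpositive ``by convexity''. The computation above contradicts this.

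\textbf{The $\mathcal{R}_0<1$ clause.} This needs no appeal to the global stability of $E^0$, whose proofs in the paper are themselves unreliable. Eliminating $A$ and $S$, the equilibrium condition becomes $\mathcal{R}_0^A\,S/S^0+\mathcal{R}_0^C\,S/(S+C)=1$, with $S$ strictly decreasing in $C$. So a positive $E^*$ exists, and is unique, exactly when $\mathcal{R}_0>1$. For $\mathcal{R}_0<1$ the clause is vacuous.
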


\begin{proof}
Consider the system describing the dynamics of the susceptible \( S \), acute \( A \), chronic \( C \), and recovered \( R \) compartments. To establish the global stability of the endemic equilibrium point, we construct the Lyapunov function:
\[
L(S,A,C,R) = (S - S^* - S^* \ln \tfrac{S}{S^*}) + (A - A^* - A^* \ln \tfrac{A}{A^*}) + (C - C^* - C^* \ln \tfrac{C}{C^*}) + (R - R^* - R^* \ln \tfrac{R}{R^*}).
\]

This function is non-negative for all \(S, A, C, R > 0\) and equals zero if and only if \(S = S^*, A = A^*, C = C^*, R = R^*\). Hence, \(L\) is positive definite.

Taking the time derivative of the Lyapunov function \(L\) along the solution trajectories of the system:
\[
\frac{dL}{dt} = \left( 1 - \frac{S^*}{S} \right) \frac{dS}{dt}
+ \left( 1 - \frac{A^*}{A} \right) \frac{dA}{dt}
+ \left( 1 - \frac{C^*}{C} \right) \frac{dC}{dt}
+ \left( 1 - \frac{R^*}{R} \right) \frac{dR}{dt}.
\]

Let us denote the time derivative of the Lyapunov function as:
\[
\frac{dL}{dt} = I_S + I_A + I_C + I_R,
\]
where each term corresponds to one compartmental contribution.

\paragraph{Susceptible Component:}
\[
I_S = \left( 1 - \frac{S^*}{S} \right) \left[ \lambda (1 - \alpha C) - \beta \frac{2SC}{S + C} - \gamma A S - u_1 S - \mu S \right]
\]

\paragraph{Acute Component:}
\[
I_A = \left( 1 - \frac{A^*}{A} \right) \left[ \gamma A S + \beta \frac{2SC}{S + C} - (\sigma + \theta + \mu) A \right]
\]

\paragraph{Chronic Component:}
\[
I_C = \left( 1 - \frac{C^*}{C} \right) \left[ \theta A + \lambda \alpha C - \left( \rho + \mu + \frac{r u_2}{1 + \phi^2} \right) C \right]
\]

\paragraph{Recovered Component:}
\[
I_R = \left( 1 - \frac{R^*}{R} \right) \left[u_1 S + \sigma A + \frac{r u_2 C}{1 + \phi^2}  - \mu R \right]
\]

These terms reflect the difference between each compartment's current state and its endemic equilibrium value, multiplied by the net flow into or out of that compartment. Each contributes to the total sign of $\frac{dL}{dt}$ in a Lyapunov stability analysis.

At the endemic equilibrium \(E^*\), we have:
\begin{align*}
\frac{dS}{dt} &= \lambda(1 - \alpha C^*) - \beta \frac{2S^*C^*}{S^* + C^*} - \gamma A^* S^* - u_1 S^* - \mu S^* = 0, \\
\frac{dA}{dt} &= \gamma A^* S^* + \beta \frac{2S^*C^*}{S^* + C^*} - (\sigma + \mu + \theta) A^* = 0, \\
\frac{dC}{dt} &= \theta A^* + \lambda \alpha C^* - \left( \mu + \rho + \frac{r u_2}{1 + \phi^2} \right) C^* = 0, \\
\frac{dR}{dt} &= \sigma A^* + \frac{r u_2 C^*}{1 + \phi^2} + u_1 S^* - \mu R^* = 0.
\end{align*}

Consider the derivative of the susceptible compartment:

\[
\frac{dS}{dt} = \lambda(1 - \alpha C) - \beta \frac{2SC}{S + C} - \gamma A S - u_1 S - \mu S.
\]

At the endemic equilibrium \( E^* = (S^*, A^*, C^*, R^*) \), we have:

\[
\left. \frac{dS}{dt} \right|_{E^*} = \lambda(1 - \alpha C^*) - \beta \frac{2S^* C^*}{S^* + C^*} - \gamma A^* S^* - u_1 S^* - \mu S^* = 0.
\]

Now consider the Lyapunov term:

\[
\left(1 - \frac{S^*}{S}\right)
\left[ \lambda(1 - \alpha C) - \beta \frac{2SC}{S + C} - \gamma A S - u_1 S - \mu S \right].
\]

To simplify the above expression, we subtract the equilibrium derivative of S and rearrange the terms to get:

\[
= \left(1 - \frac{S^*}{S}\right) 
\left[ \left( \frac{dS}{dt} - \left. \frac{dS}{dt} \right|_{E^*} \right) \right].
\]

This form is useful in proving that the Lyapunov derivative \( \frac{dL}{dt} \leq 0 \), since the difference reflects deviation from the equilibrium, and the prefactor \( \left(1 - \frac{S^*}{S} \right) \) ensures negativity under convexity or Jensen-type arguments.

We apply the same argument to all compartments. For any compartment \( x \in \{S, A, C, R\} \), let
\[
f_x(x) = \text{RHS of } \frac{dx}{dt}, \text{so that } f_x(x^*) = 0 \text{ at equilibrium}.
\]

Then, the total derivative of the Lyapunov function can be compactly written as:
\[
\frac{dL}{dt} = \sum_{x \in \{S, A, C, R\}} \left( 1 - \frac{x^*}{x} \right) \left( f_x(x) - f_x(x^*) \right).
\]

Here the expression \(f_x(x) - f_x(x^*)\) captures how the nonlinear terms deviate from equilibrium, and the scalar factor \(1 - \frac{x^*}{x}\) is positive when \(x > x^*\) and negative \(x < x^*\). So, the overall product \(\left( 1 - \frac{x^*}{x} \right) \left( f_x(x) - f_x(x^*) \right)\) becomes non-positive due to convexity and Jensen-type inequalities.

This allows us to rewrite the \(\frac{dL}{dt}\):
\[
\frac{dL}{dt} = \left( 1 - \frac{S^*}{S} \right)(f_S(S) - f_S(S^*))
+ \left( 1 - \frac{A^*}{A} \right)(f_A(A) - f_A(A^*))
+ \left( 1 - \frac{C^*}{C} \right)(f_C(C) - f_C(C^*)) + \left( 1 - \frac{R^*}{R} \right)(f_R(R) - f_R(R^*)) \leq 0.
\]
with equality if and only if \(x = x^*\) for all x.

Thus, by \textbf{LaSalle's Invariance Principle} \cite{la1976stability, lasalle1976stability}, the endemic equilibrium \(E^*\) is \textbf{globally asymptotically stable}.
\end{proof}

\section{Optimal Control Problem}
Control of infectious diseases is typically effective when crucial parameters affecting the basic reproduction number \(\mathcal{R}_0\) can be identified, and action taken to push it below the threshold of one \cite{molina2022optimal, pontryagin2018mathematical}. There are two control-related parameters \(u_1\), which is the effort for vaccination or behavioral intervention, and \(u_2\), the effort for treatment of chronic infections included in our model. Sensitivity analysis shows that both \( u_1 \) and \( u_2 \) are inversely related to \( \mathcal{R}_0 \), which illustrates that changes in these two controls will contribute to the significant inhibition of infection transmission. This indicates that a combination of vaccination campaigns and treatment interventions is able to steer the disease dynamics to lower values than the epidemic threshold. The threshold condition \( \mathcal{R}_0 < 1 \) has an important policy interpretation: public health policies must strive to reduce \( \mathcal{R}_0 \) through measures targeted toward different subgroups in the population until this inequality is reached. If attained, the model suggests that the disease will not survive in the population and will instead be eliminated. Hence, the timely and effective implementation of control measures can shift the system from an endemic to a disease-free regime, aligning theoretical thresholds with practical decision-making.

The full optimal control problem consists of solving the state system, the adjoint system, and the control characterization.

To minimize the infected population and the cost of intervention, we define the objective functional:
\begin{align}
J(Z, U) = \int_0^{T_f} \left( k_1 A + k_2 C + \frac{1}{2} w_1 u_1^2 + \frac{1}{2} w_2 u_2^2 \right) dt.
\end{align}

Applying Pontryagin's Maximum Principle \cite{pontryagin2018mathematical}, we define the Hamiltonian:
\begin{align}
\label{Hamiltonian}
H = k_1 A + k_2 C + \frac{1}{2}w_1 u_1^2 + \frac{1}{2}w_2 u_2^2 + \sum_{j=1}^4 \lambda_j g_j(Z,U).
\end{align}

The adjoint equations are:
\begin{align}
\frac{d\lambda_j}{dt} = -\frac{\partial H}{\partial Z_j}, \quad j = 1,2,3,4.
\end{align}

Explicitly, these are:
\begin{align}
\frac{d\lambda_1}{dt} &= \left( \lambda_1 + \lambda_4 \right) (u_1 + \mu) - \lambda_1 \left( \frac{2\beta C(S+C) - 2\beta SC}{(S+C)^2} + \gamma A \right) + \lambda_2 \left( \frac{2\beta C(S+C) - 2\beta SC}{(S+C)^2} + \gamma A \right), \\
\frac{d\lambda_2}{dt} &= -k_1 + \lambda_2 (\sigma + \mu + \theta) - \lambda_3 \theta - \lambda_4 \sigma, \\
\frac{d\lambda_3}{dt} &= -k_2 + \lambda_1 \lambda \alpha + \lambda_3 \left( \mu + \rho + \frac{r u_2}{1+\phi^2} \right) - \lambda_4 \frac{r u_2}{1+\phi^2}, \\
\frac{d\lambda_4}{dt} &= \mu \lambda_4.
\end{align}

The optimal controls are characterized by solving:
\begin{align}
\frac{\partial H}{\partial u_1} &= 0, \quad \frac{\partial H}{\partial u_2} = 0.
\end{align}

Thus, the expressions for optimal controls are:
\begin{align}
u_1^*(t) &= \min\left(\max\left(0, \frac{(\lambda_4 - \lambda_1)S}{w_1}\right), 1\right), \\
u_2^*(t) &= \min\left(\max\left(0, \frac{\phi (\lambda_3 - \lambda_4)C}{w_2(1+\phi^2)}\right), 1\right).
\end{align}

\section{Numerical Simulations and Optimal Control}
Numerical simulations are crucial for understanding the impact of modifications of parameters on the overall dynamics of HBV infection. In this section, to bolster the theoretical findings in the preceding section, we perform numerical simulations on the proposed epidemiological model \eqref{model eqs.} based on the outcomes of the theoretical research.  To perform these numerical simulations, MATLAB\textsuperscript{\textregistered} software was used. Sensitivity analysis was carried out using LHS along with PRCC \cite{gomero2012latin}. We used the ode15s solver function to find the solution of our model and demonstrate the local and global stability of our proposed model.
Using backward and forward finite difference schemes, we have graphically demonstrated the potential of optimal control in reducing infected individuals, reducing the susceptibility to infection, and increasing the recovered population.

\subsection{Sensitivity Analysis}
Sensitivity analysis is a vital component of mathematical modeling \cite{gomero2012latin}. Most models comprise numerous parameters, but only a subset significantly influences the qualitative behavior of the system. Identifying these influential parameters is crucial for model calibration, control design, and effective intervention strategies. Several techniques exist for conducting a sensitivity analysis. Among them, the most commonly used are normalized forward sensitivity indices, Partial Rank Correlation Coefficients (PRCC), Sobol indices, etc. In this study, we employed Latin Hypercube Sampling (LHS) along with PRCC \cite{qian2020sensitivity} to calculate the sensitivity of parameters involved in the reproduction number \( \mathcal{R}_0 \). This technique allowed us to quantify how changes in parameter values affect \( \mathcal{R}_0 \) and the overall dynamics of the model, and it reveals that the infection transmission rates \( \beta, \gamma\) and \(\alpha\) are positively correlated with \( \mathcal{R}_0 \), indicating that any increment in these parameters tends to raise the reproduction number, whereas parameters such as \( \sigma, \theta, \mu, u_2 \) and treatment rate \( r \) showed a negative correlation with \( \mathcal{R}_0 \), suggesting that increased efforts in vaccination and treatment can reduce the transmission potential of the disease. The sensitivity analysis results are summarized in Figure \ref{fig:LHS/PRCC}, where subfigures \ref{fig:bargraph} and \ref{fig:line-chart} show the PRCC bar plot and the corresponding line plot, respectively. Bar plots \ref{fig:bargraph} and line charts \ref{fig:line-chart} of PRCC values for each parameter, along with the magnitude and direction of their influence on \(\mathcal{R}_0\), have been used to illustrate the results. For the statistical significance of each parameter, a comprehensive table \ref{tab:PRCC_table2} containing the calculated PRCC values and corresponding p-values is also provided. This analysis aims to identify the most influential parameters and provide direction for focused public health interventions and policy design.

\vspace{1cm}
\begin{table}[ht]
    \centering
    \begin{tabular}{>{\centering\arraybackslash}p{4cm} >{\centering\arraybackslash}p{4cm} >{\centering\arraybackslash}p{4cm}}
        \hline
        \textbf{Parameter} & \textbf{PRCC} & \textbf{p-value} \\
        \hline
        $\beta$ & 0.4627 & 0.0000 \\ \hline
        $\sigma$ & -0.1245 & 0.0053 \\ \hline
        $\gamma$ & 0.2254 & 0.0000 \\ \hline
        $\theta$ & -0.1918 & 0.0000 \\ \hline
        $\phi$ & -0.0077 & 0.8629 \\ \hline
        $\mu$ & -0.4316 & 0.0000 \\ \hline
        $\alpha$ & 0.1327 & 0.0030 \\ \hline
        $\lambda$ & 0.3770 & 0.0000 \\ \hline
        r & -0.4878 & 0.0000 \\ \hline
        $u_1$ & -0.0210 & 0.6394 \\ \hline
        $u_2$ & -0.4097 & 0.0000 \\ \hline
        $\rho$ & -0.0685 & 0.1263 \\ \hline
        \hline
    \end{tabular}
    \caption{PRCC values and corresponding p-values for various parameters.}
    \label{tab:PRCC_table2}
\end{table}

\begin{figure}[htbp]
  \centering
  \begin{subfigure}{0.65\textwidth}
    \includegraphics[width=\linewidth]{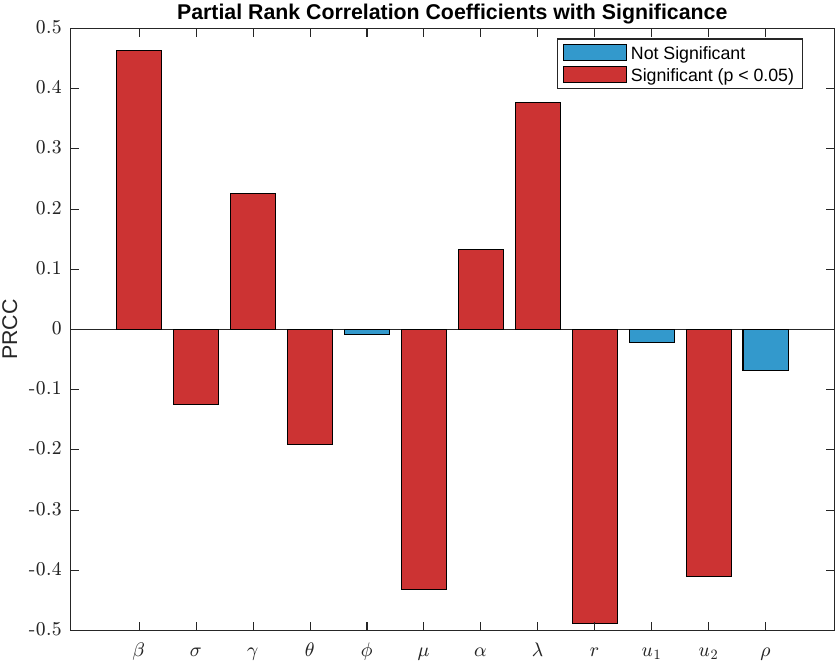}
    \caption{Bar graph of parameters involved in \(\mathcal{R}_0\) and their significance}
    \label{fig:bargraph}
  \end{subfigure}

  \vspace{0.3cm}
  \begin{subfigure}{0.85\textwidth}
    \centering
    \includegraphics[width=\linewidth]{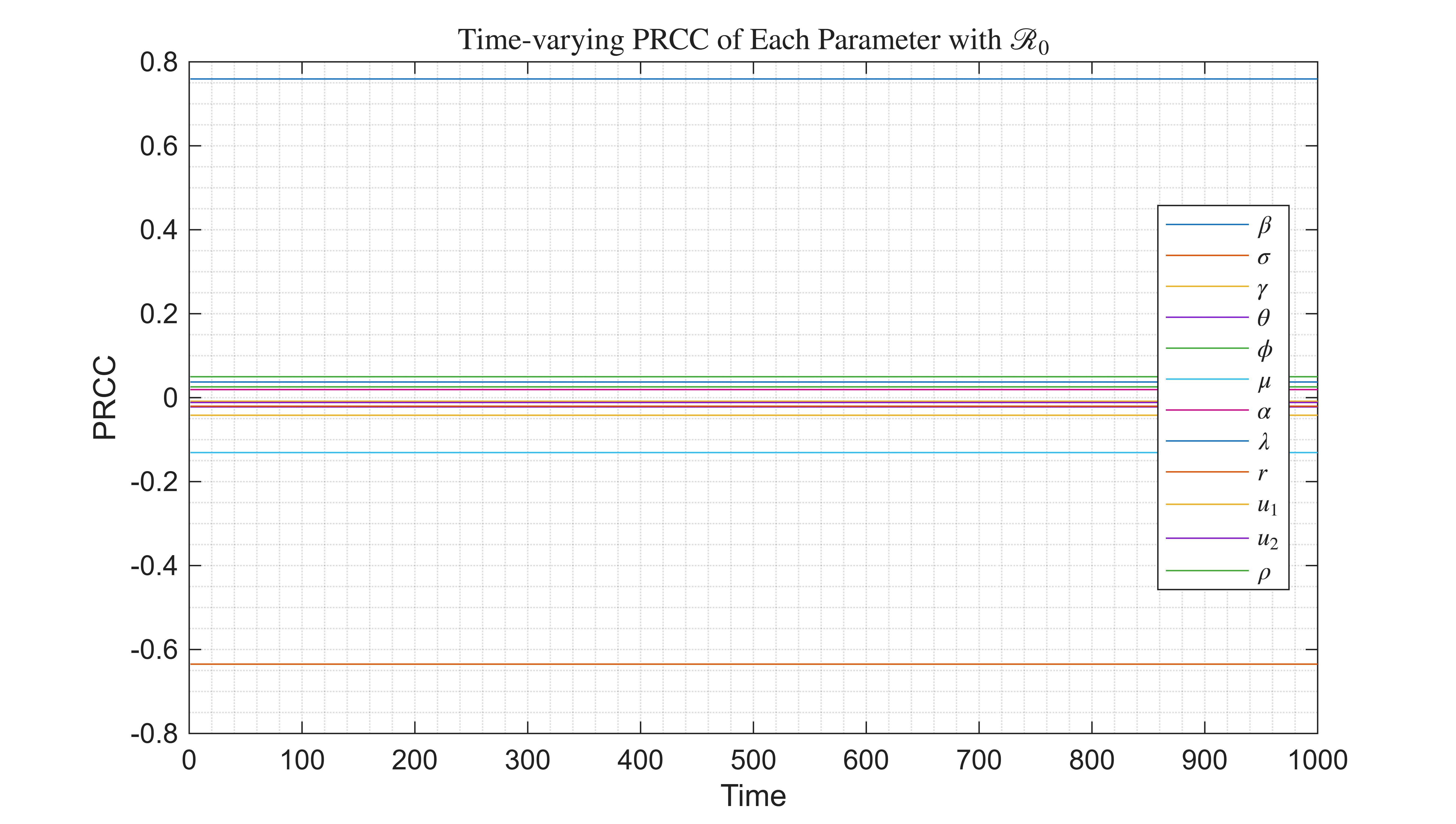}
    \caption{Line chart of parameters involved in \(\mathcal{R}_0\) and their significance}
    \label{fig:line-chart}
  \end{subfigure}
  \caption{Partial Rank Correlation Coefficients (PRCCs) illustrating the sensitivity of the basic reproduction number \( \mathcal{R}_0 \) concerning the model parameters. Positive PRCC values indicate parameters that increase \( \mathcal{R}_0 \), while negative values reflect those that decrease it. Parameters such as the transmission rates \( \beta \& \gamma \) and vertical transmission rate \( \lambda \) show strong positive influence, whereas control-related parameters such as \( u_2 \) (treatment) and \( r \) (recovery rate) are negatively correlated with \( \mathcal{R}_0 \).}
  \label{fig:LHS/PRCC}
\end{figure}

\newpage
Now, we have plotted a 3d surface graph and the corresponding heatmap of pairs of significant parameters against \(\mathcal{R}_0\) on the basis of the findings of the above sensitivity analysis, see Figs. \cref{Sens_1,Sens_2,Sens_3,Sens_4,Sens_5,Sens_6,Sens_7,Sens_8,Sens_9,Sens_10,Sens_11,Sens_12}. From each pair of 3d surface graphs and heatmaps, we can easily observe how the value of \(\mathcal{R}_0\) changes when we vary the values of parameters. Figures \ref{fig:com1} and \ref{fig:com2} illustrate the effect of the variation of different parameters on the recovered population. Parameters like \(\beta, \gamma, \mu, \theta\) and \(u_2\) show greater variation. 

\begin{figure}[htbp]
  \centering
  \begin{minipage}{0.40\textwidth}
    \includegraphics[width=\linewidth]{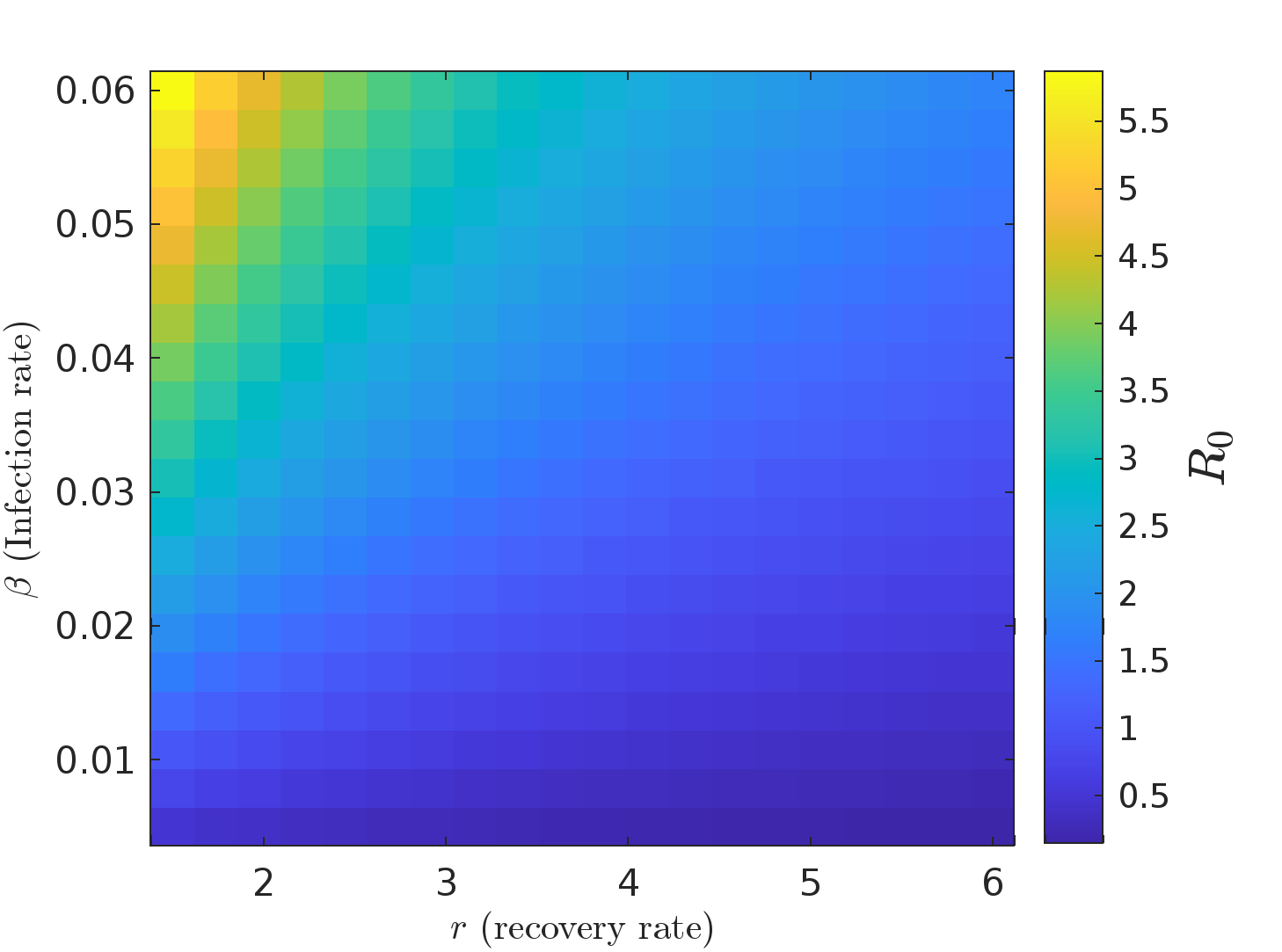}
  \end{minipage}
  \hfill
  \begin{minipage}{0.50\textwidth}
    \includegraphics[width=\linewidth]{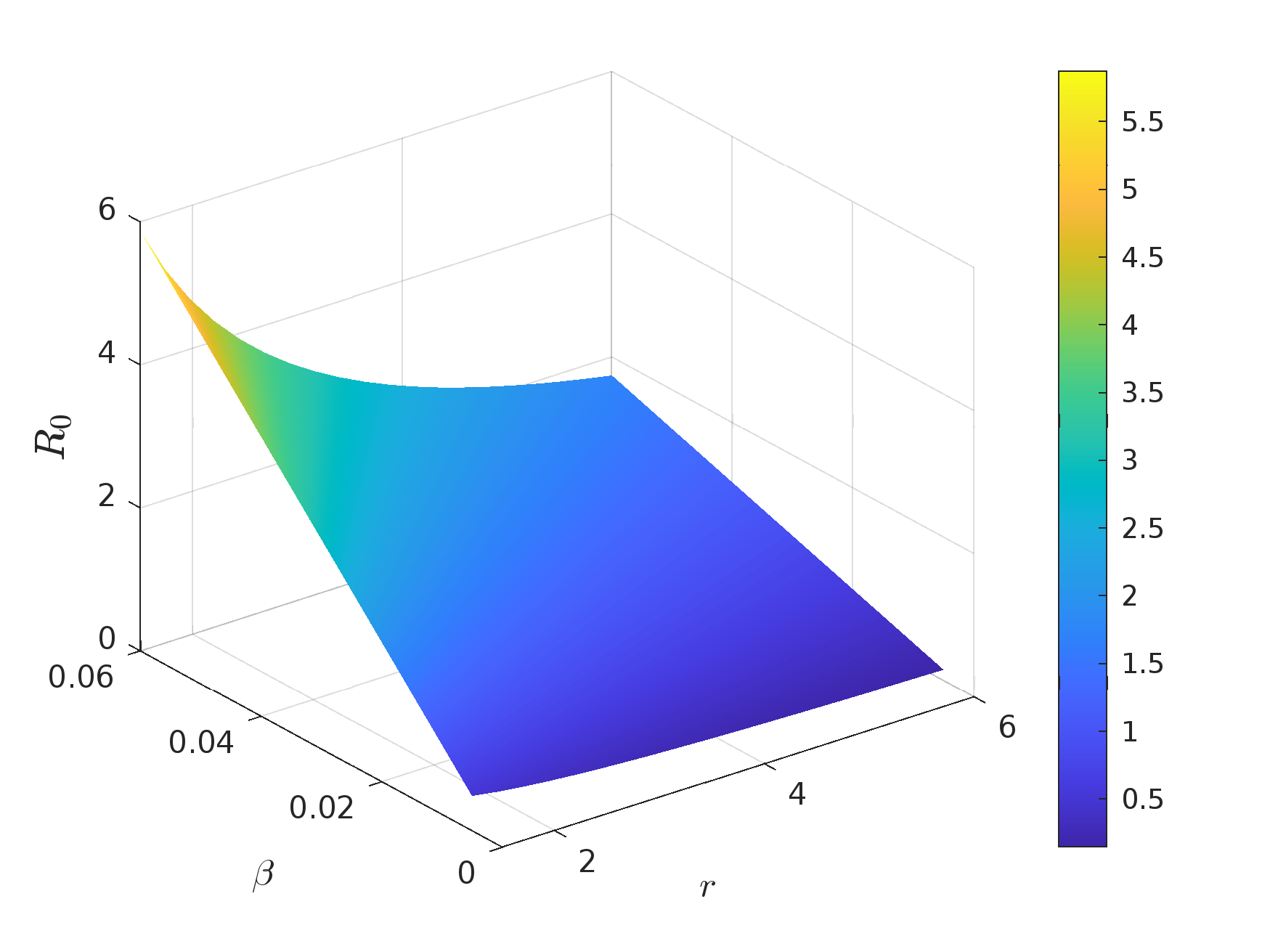}
  \end{minipage}
  \caption{Heatmap and Surface plot of  \(\mathcal{R}_0\) when varying $r$ and $\beta$.}
  \label{Sens_1}
\end{figure}

\begin{figure}[htbp]
  \centering
  \begin{minipage}{0.40\textwidth}
    \includegraphics[width=\linewidth]{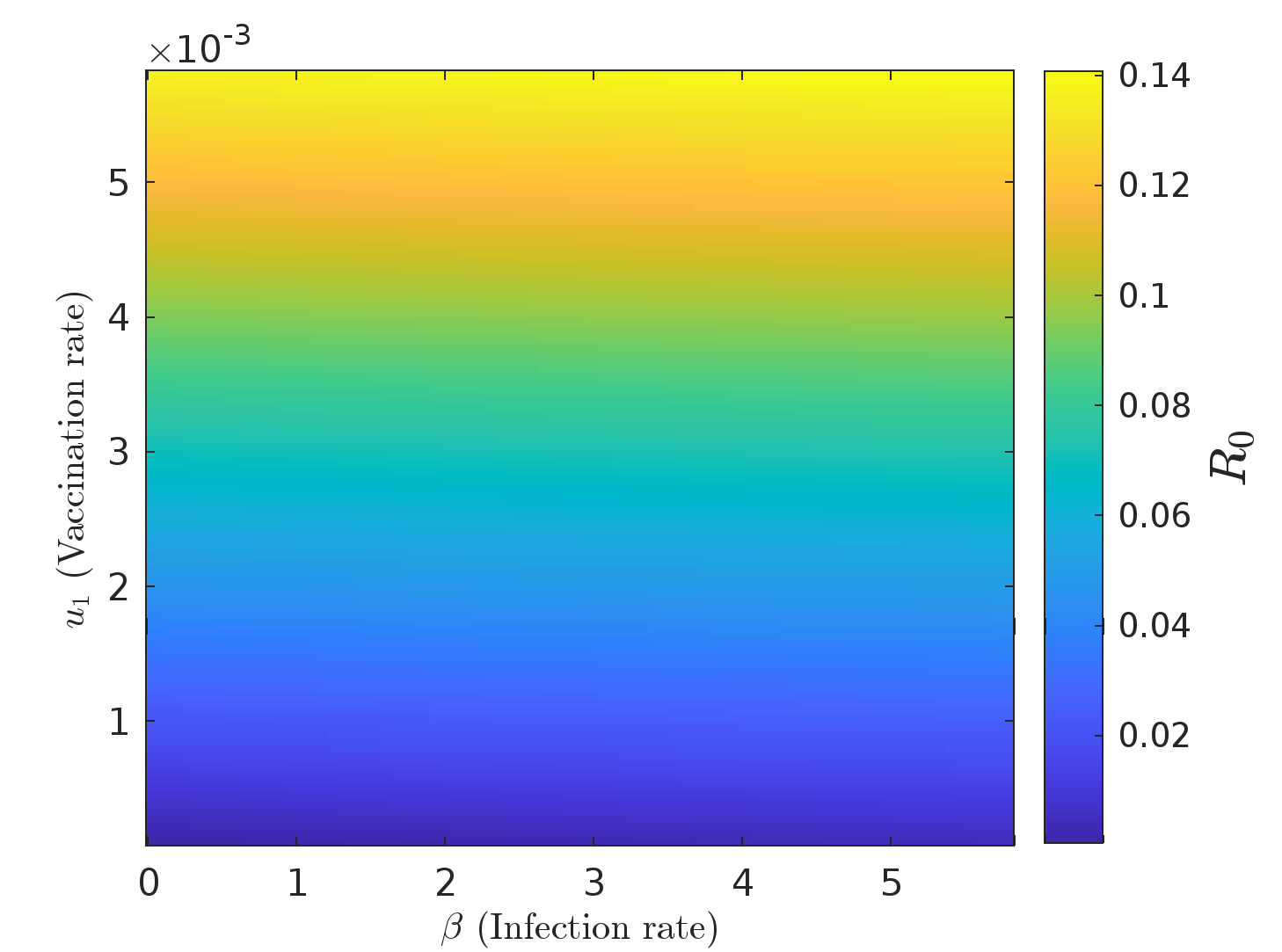}
  \end{minipage}
  \hfill
  \begin{minipage}{0.50\textwidth}
    \includegraphics[width=\linewidth]{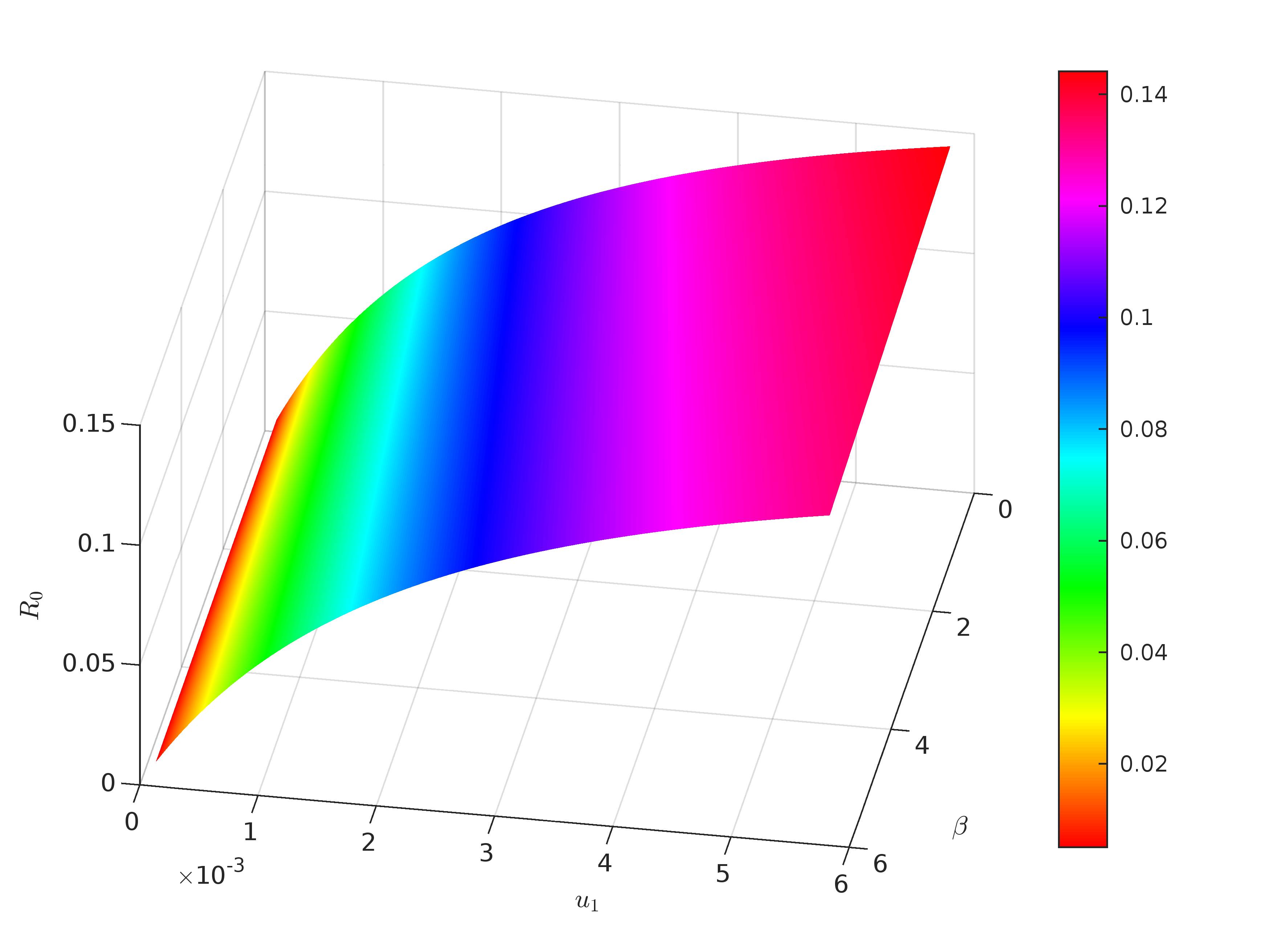}
  \end{minipage}
  \caption{Heatmap and Surface plot of $R_0$ when varying $\beta$ and $u_1$.}
  \label{Sens_2}
\end{figure}

\begin{figure}[htbp]
  \centering
  \begin{minipage}{0.40\textwidth}
    \includegraphics[width=\linewidth]{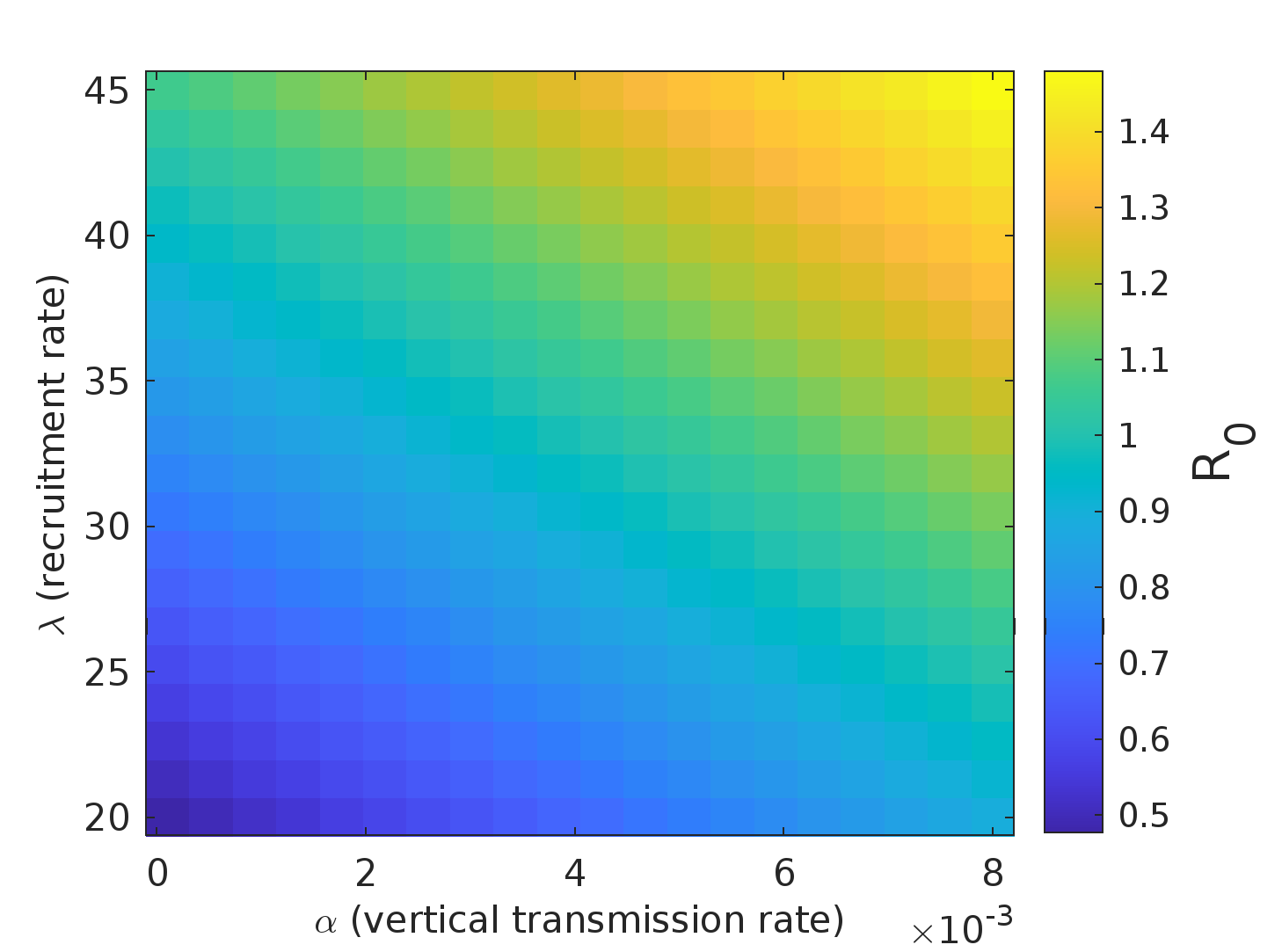}
  \end{minipage}
  \hfill
  \begin{minipage}{0.50\textwidth}
    \includegraphics[width=\linewidth]{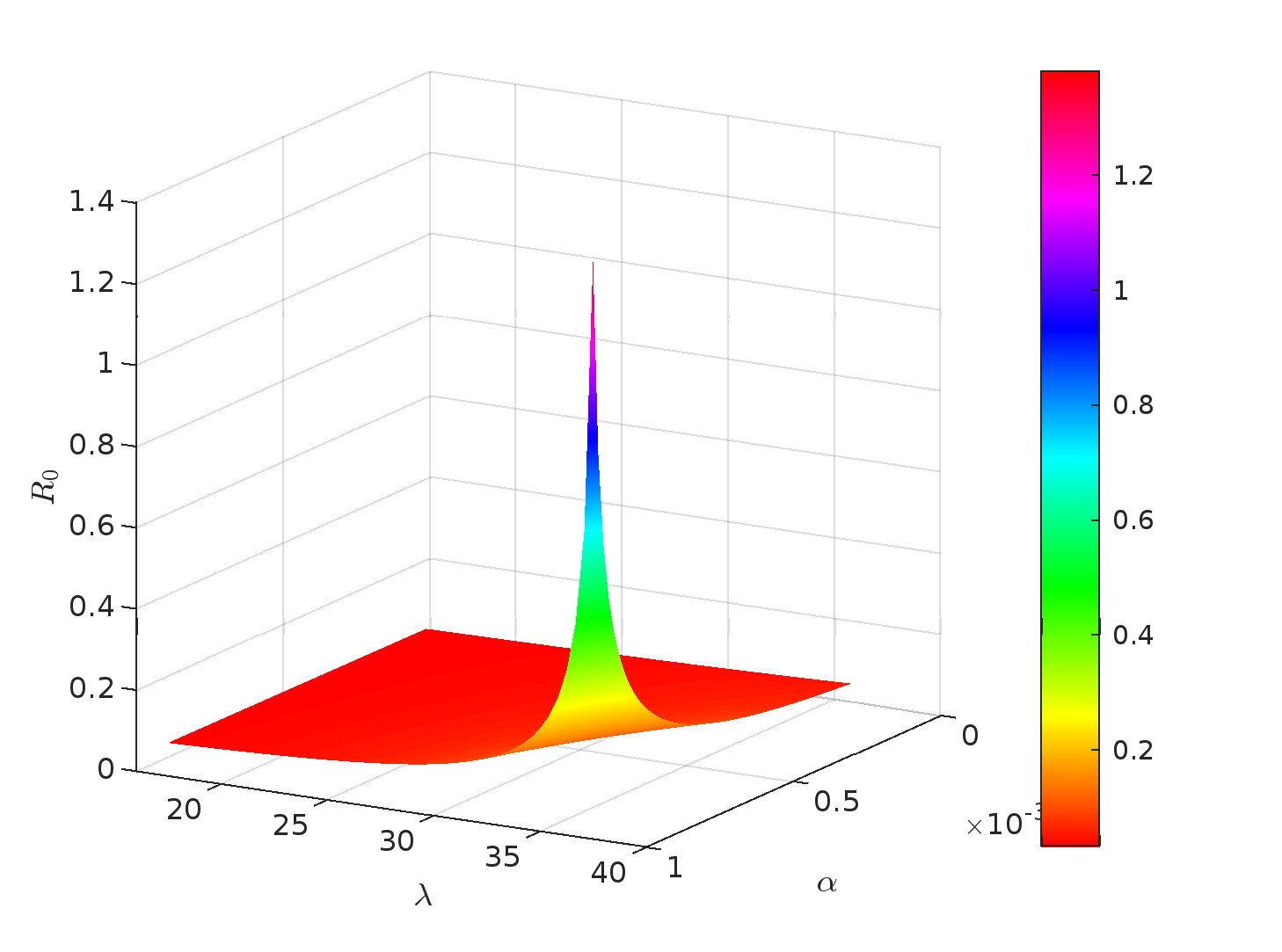}
  \end{minipage}
  \caption{Heatmap and Surface plot of $R_0$ when varying $\alpha$ and $\lambda$.}
  \label{Sens_3}
\end{figure}

\begin{figure}[htbp]
  \centering
  \begin{minipage}{0.40\textwidth}
    \includegraphics[width=\linewidth]{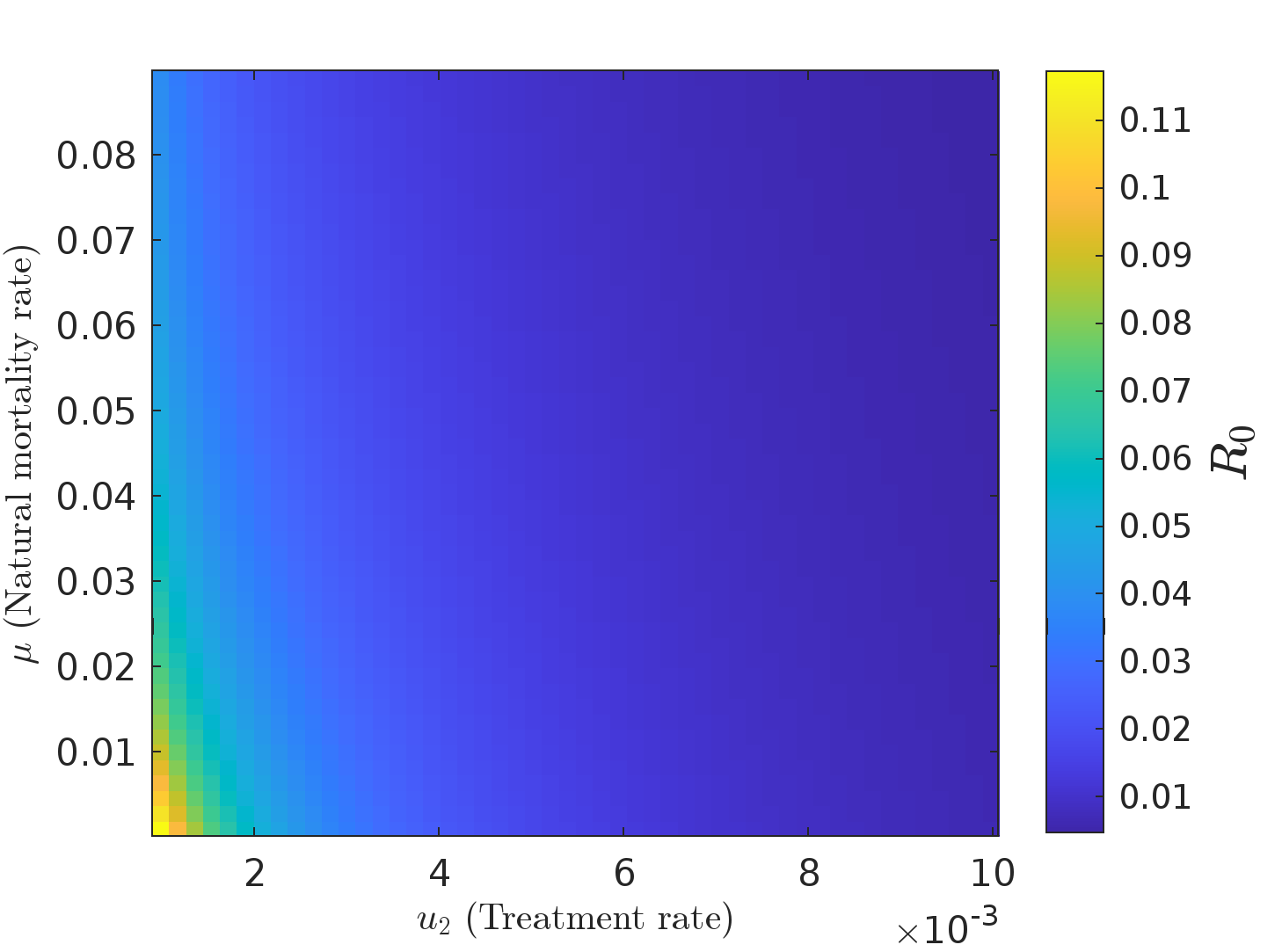}
  \end{minipage}
  \hfill
  \begin{minipage}{0.55\textwidth}
    \includegraphics[width=\linewidth]{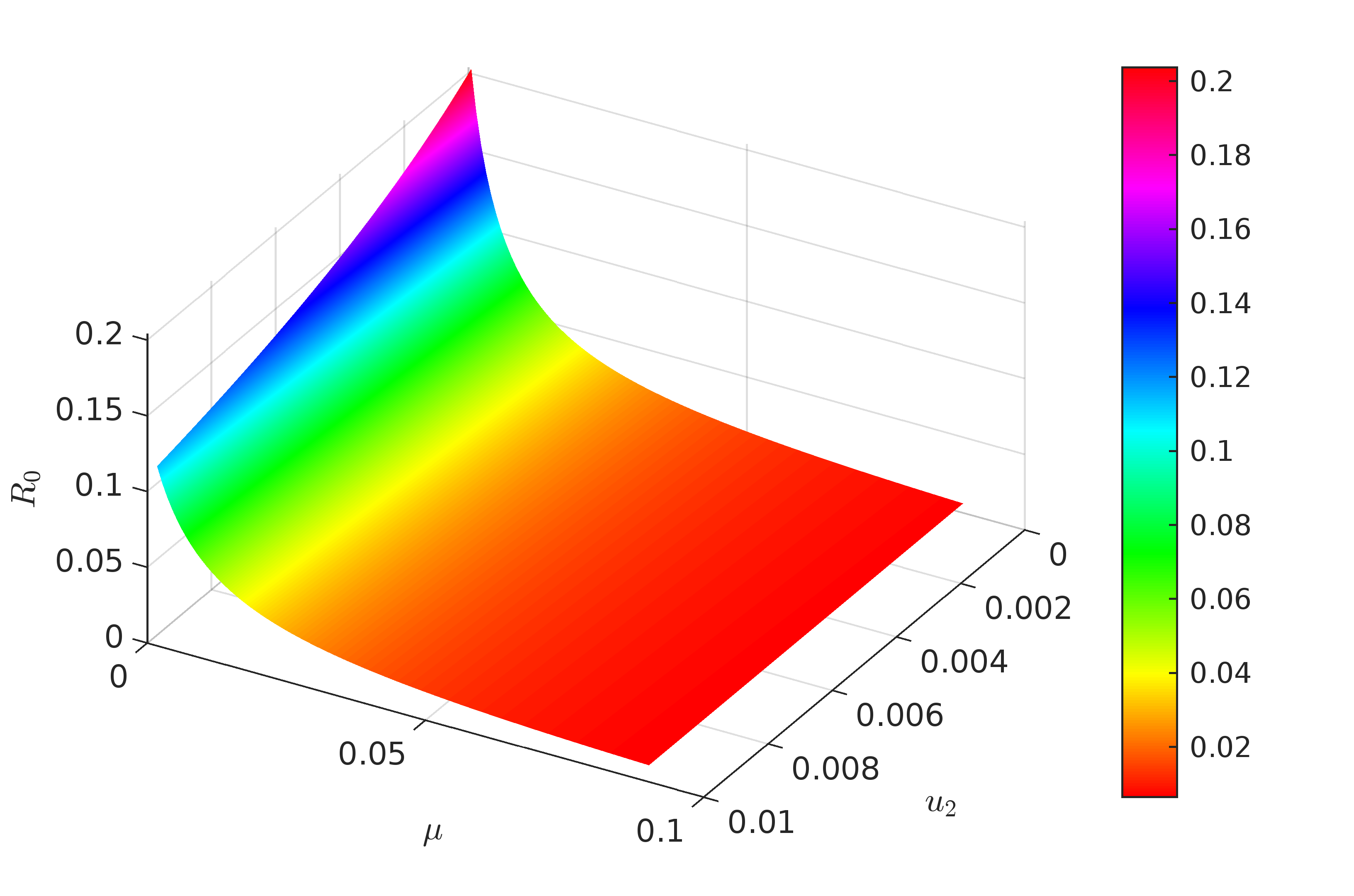}
  \end{minipage}
  \caption{Heatmap and Surface plot of $R_0$ when varying $u_2$ and $\mu$.}
  \label{Sens_4}
\end{figure}

\begin{figure}[htbp]
  \centering
  \begin{minipage}{0.40\textwidth}
    \includegraphics[width=\linewidth]{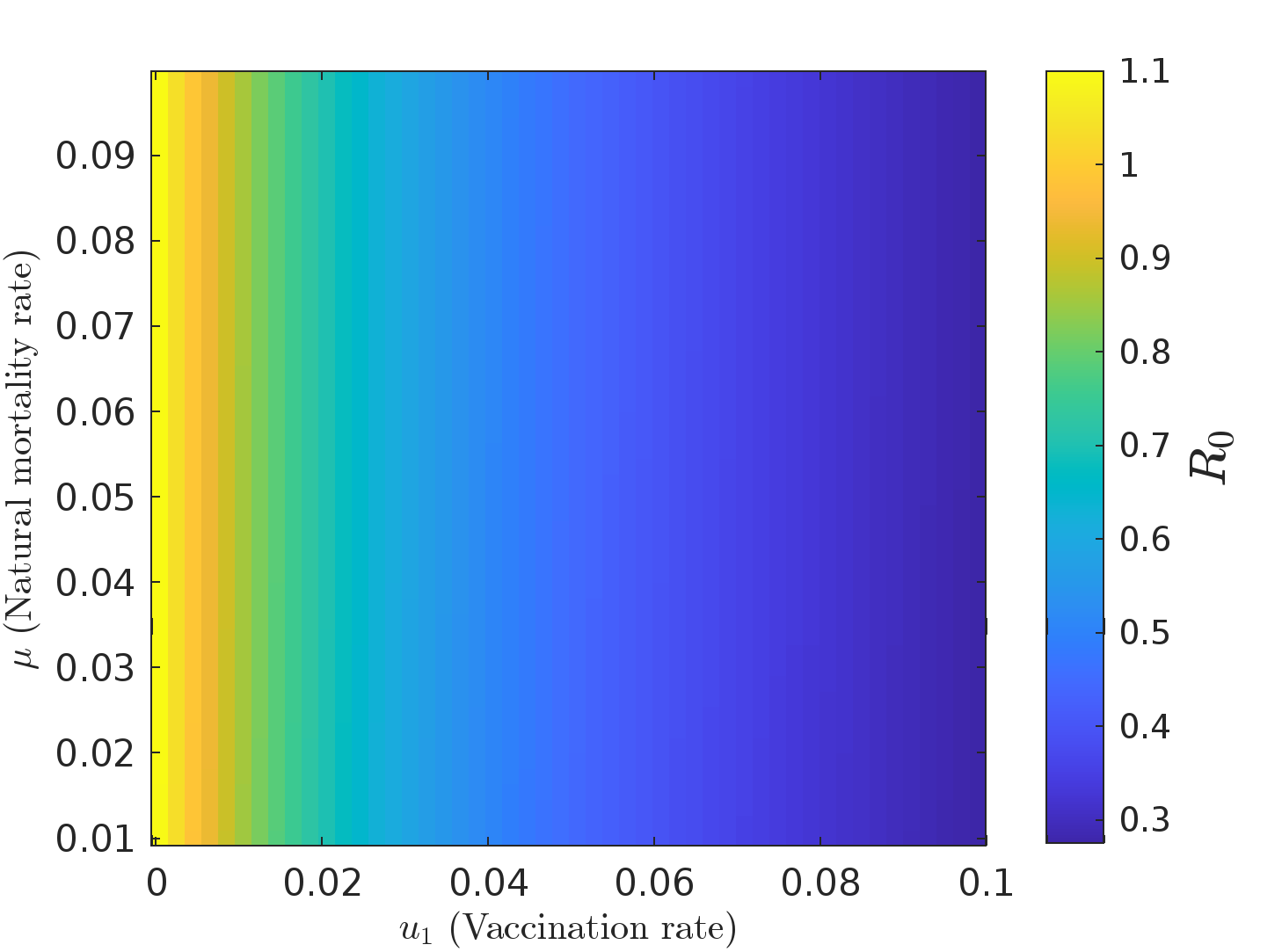}
  \end{minipage}
  \hfill
  \begin{minipage}{0.55\textwidth}
    \includegraphics[width=\linewidth]{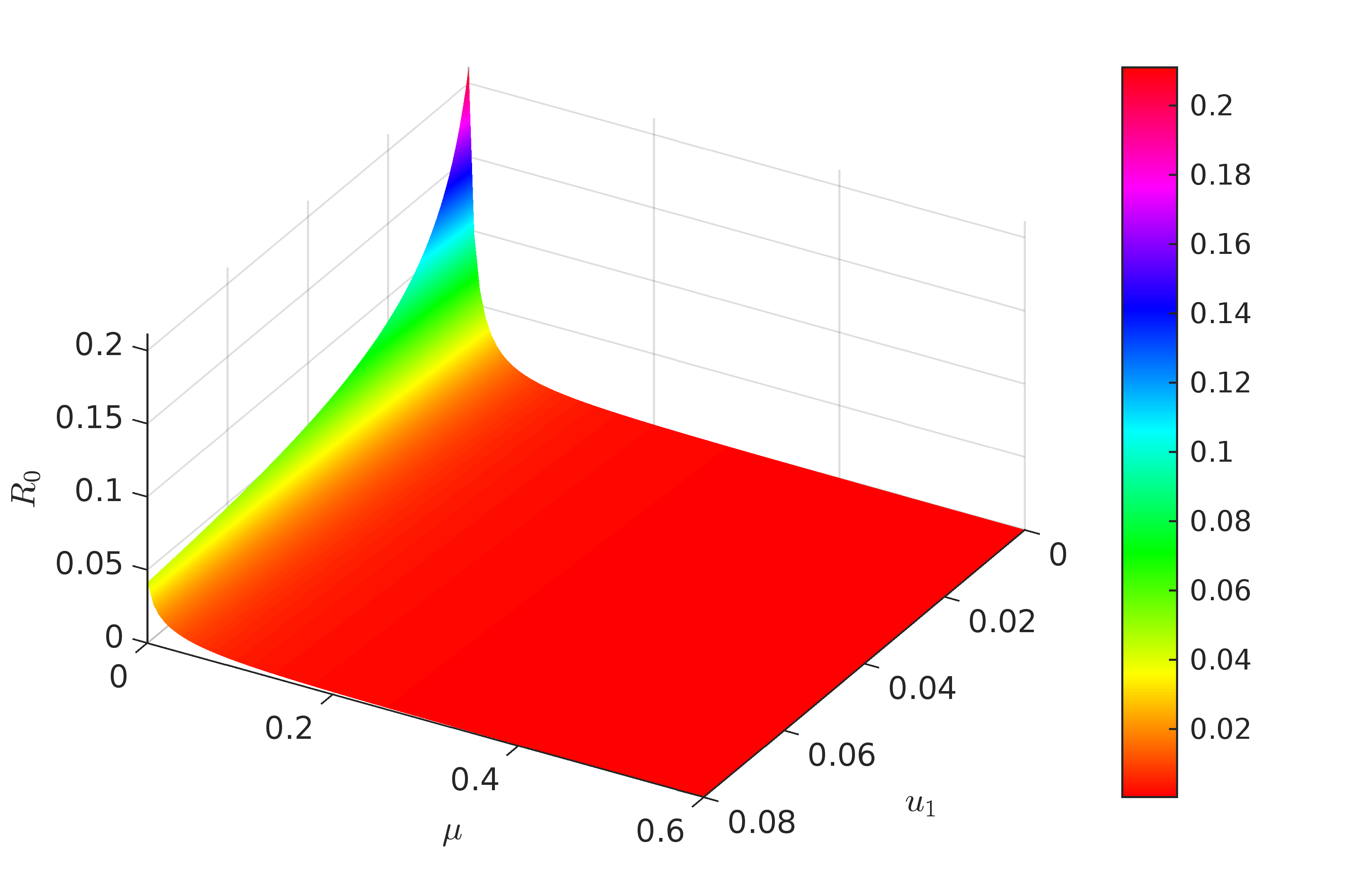}
  \end{minipage}
  \caption{Heatmap and Surface plot of $R_0$ when varying $u_1$ and $\mu$.}
  \label{Sens_5}
\end{figure}

\begin{figure}[htbp]
  \centering
  \begin{minipage}{0.40\textwidth}
    \includegraphics[width=\linewidth]{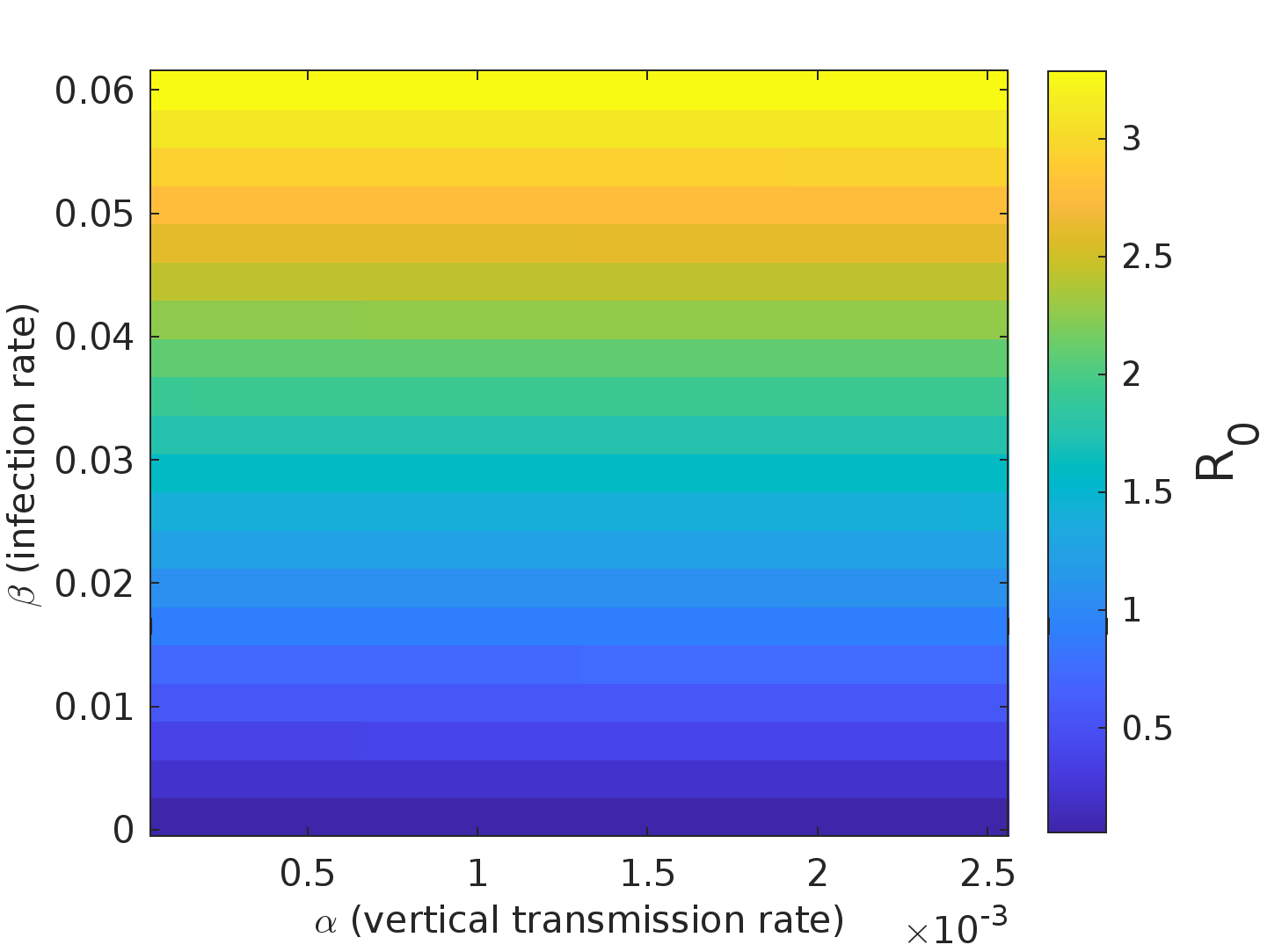}
  \end{minipage}
  \hfill
  \begin{minipage}{0.55\textwidth}
    \includegraphics[width=\linewidth]{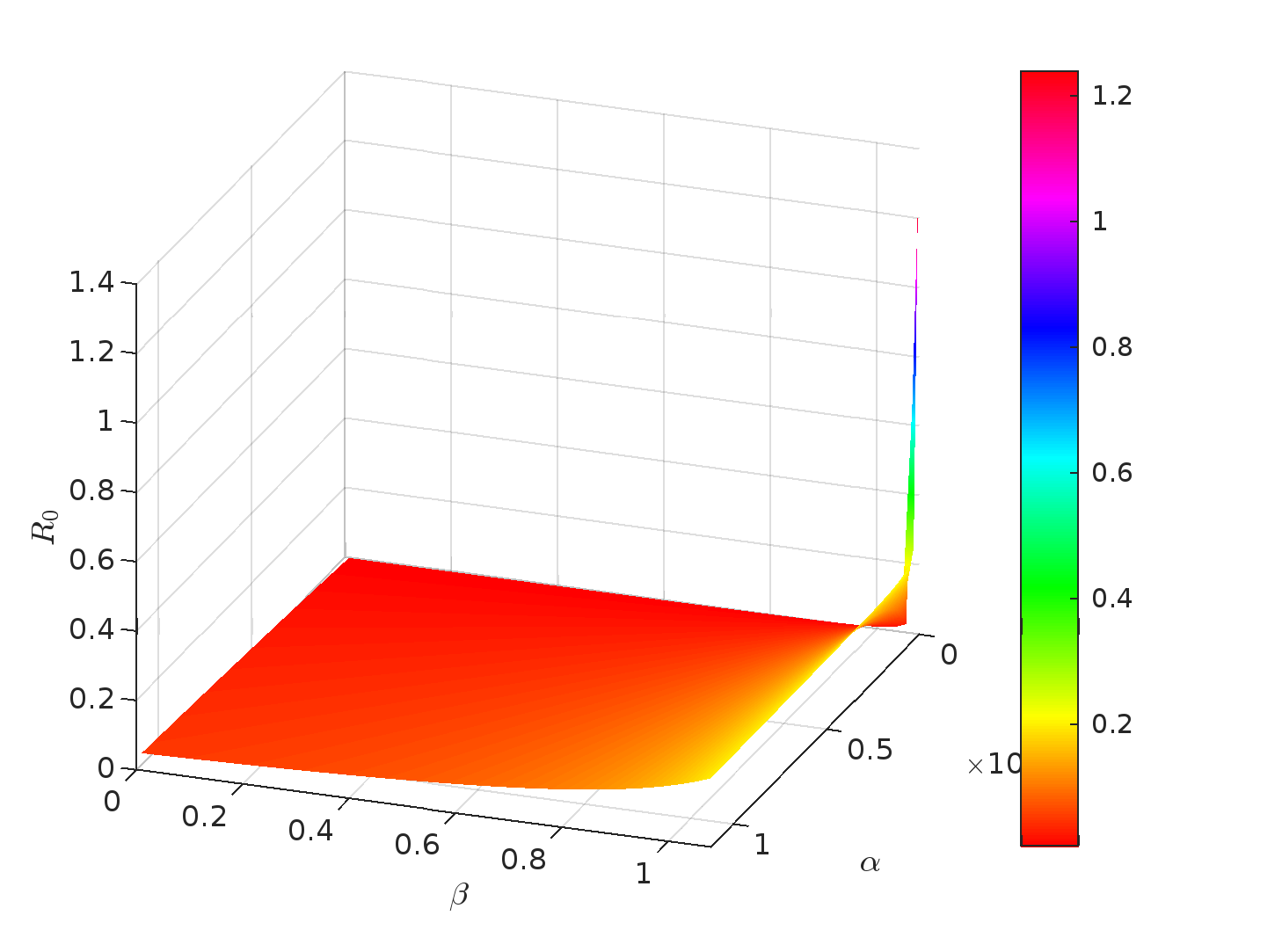}
  \end{minipage}
  \caption{Heatmap and Surface plot of $R_0$ when varying $\alpha$ and $\beta$.}
  \label{Sens_6}
\end{figure}

\begin{figure}[htbp]
  \centering
  \begin{minipage}{0.40\textwidth}
    \includegraphics[width=\linewidth]{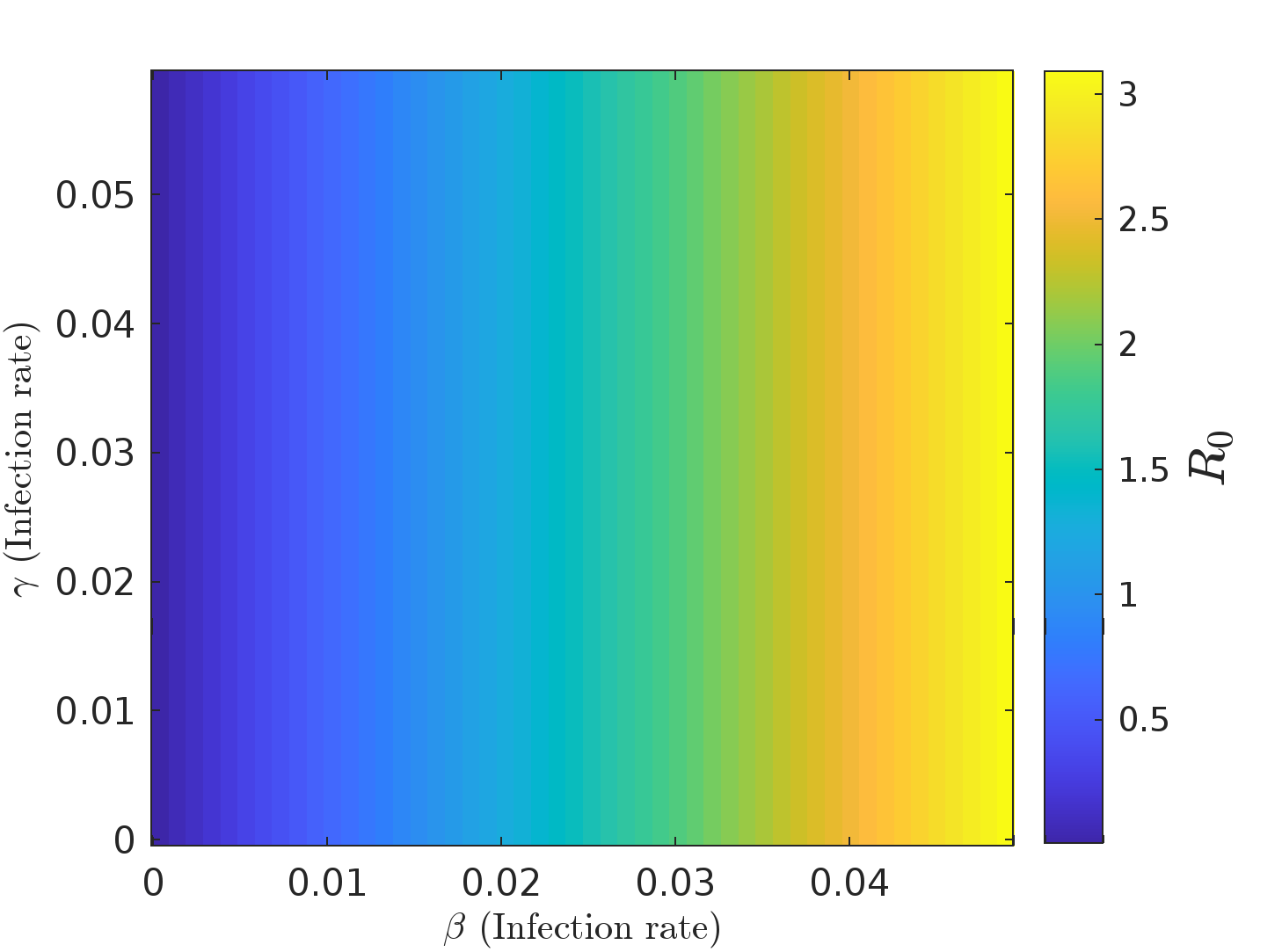}
  \end{minipage}
  \hfill
  \begin{minipage}{0.55\textwidth}
    \includegraphics[width=\linewidth]{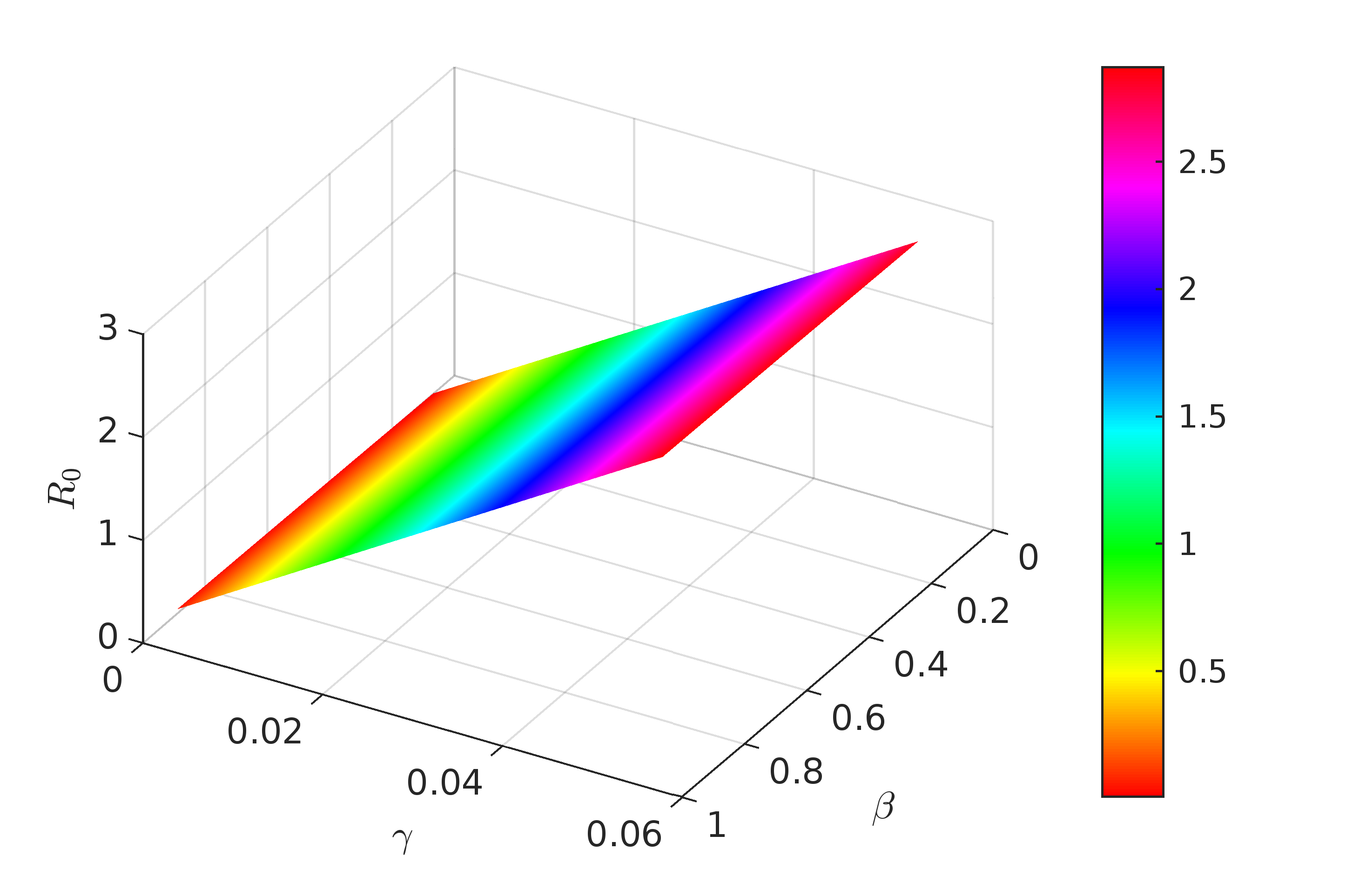}
  \end{minipage}
  \caption{Heatmap and Surface plot of $R_0$ when varying $\beta$ and $\gamma$.}
  \label{Sens_7}
\end{figure}

\begin{figure}[htbp]
  \centering
  \begin{minipage}{0.40\textwidth}
    \includegraphics[width=\linewidth]{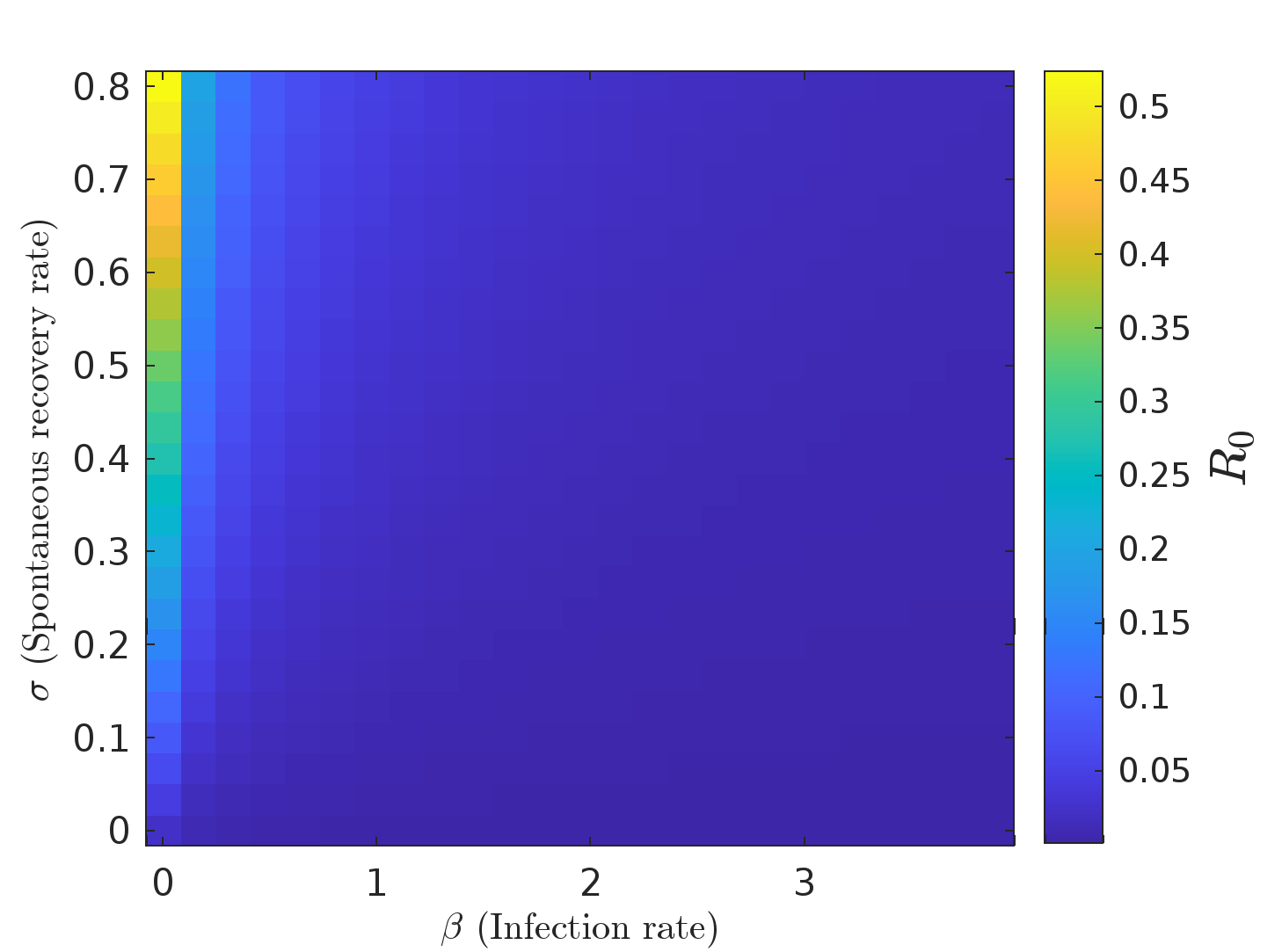}
  \end{minipage}
  \hfill
  \begin{minipage}{0.55\textwidth}
    \includegraphics[width=\linewidth]{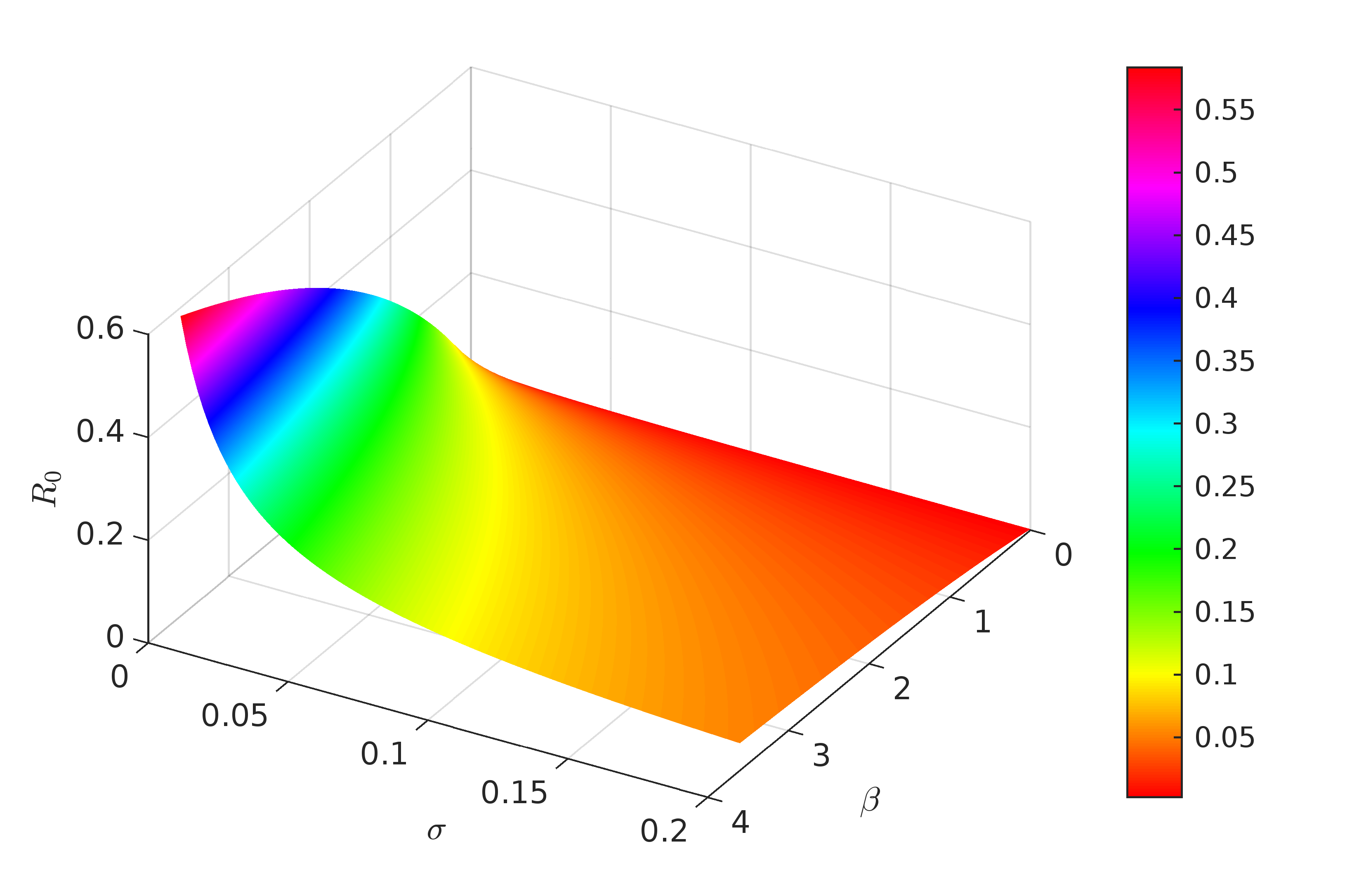}
  \end{minipage}
  \caption{Heatmap and Surface plot of $R_0$ when varying $\beta$ and $\sigma$.}
  \label{Sens_8}
\end{figure}

\begin{figure}[htbp]
  \centering
  \begin{minipage}{0.40\textwidth}
    \includegraphics[width=\linewidth]{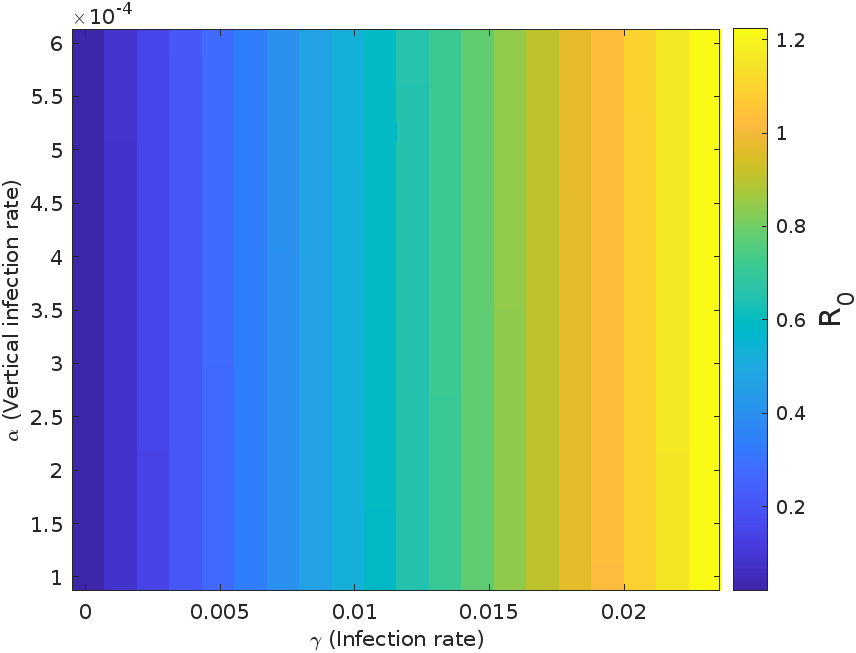}
  \end{minipage}
  \hfill
  \begin{minipage}{0.55\textwidth}
    \includegraphics[width=\linewidth]{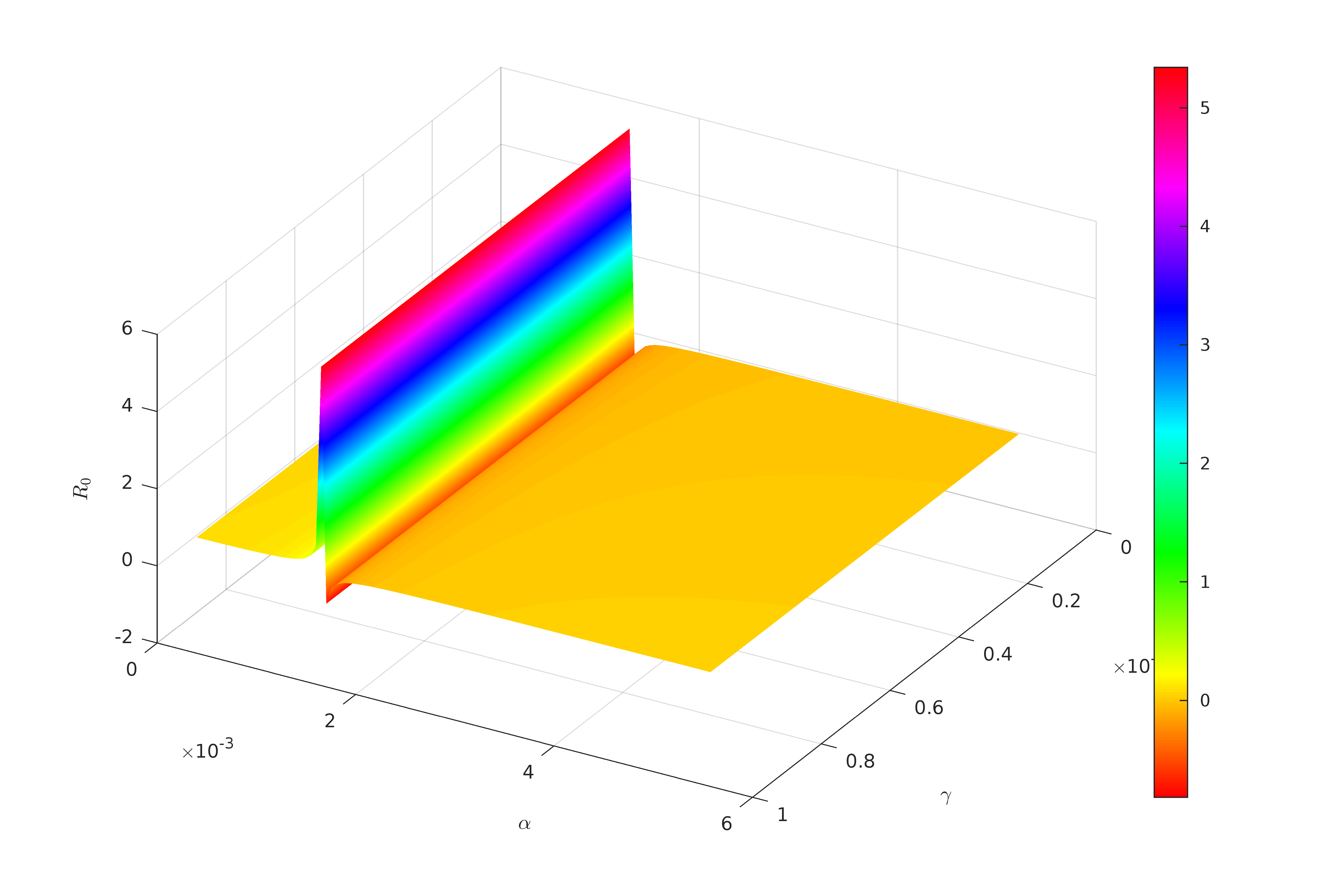}
  \end{minipage}
  \caption{Heatmap and Surface plot of $R_0$ when varying $\gamma$ and $\alpha$.}
  \label{Sens_9}
\end{figure}

\begin{figure}[htbp]
  \centering
  \begin{minipage}{0.40\textwidth}
    \includegraphics[width=\linewidth]{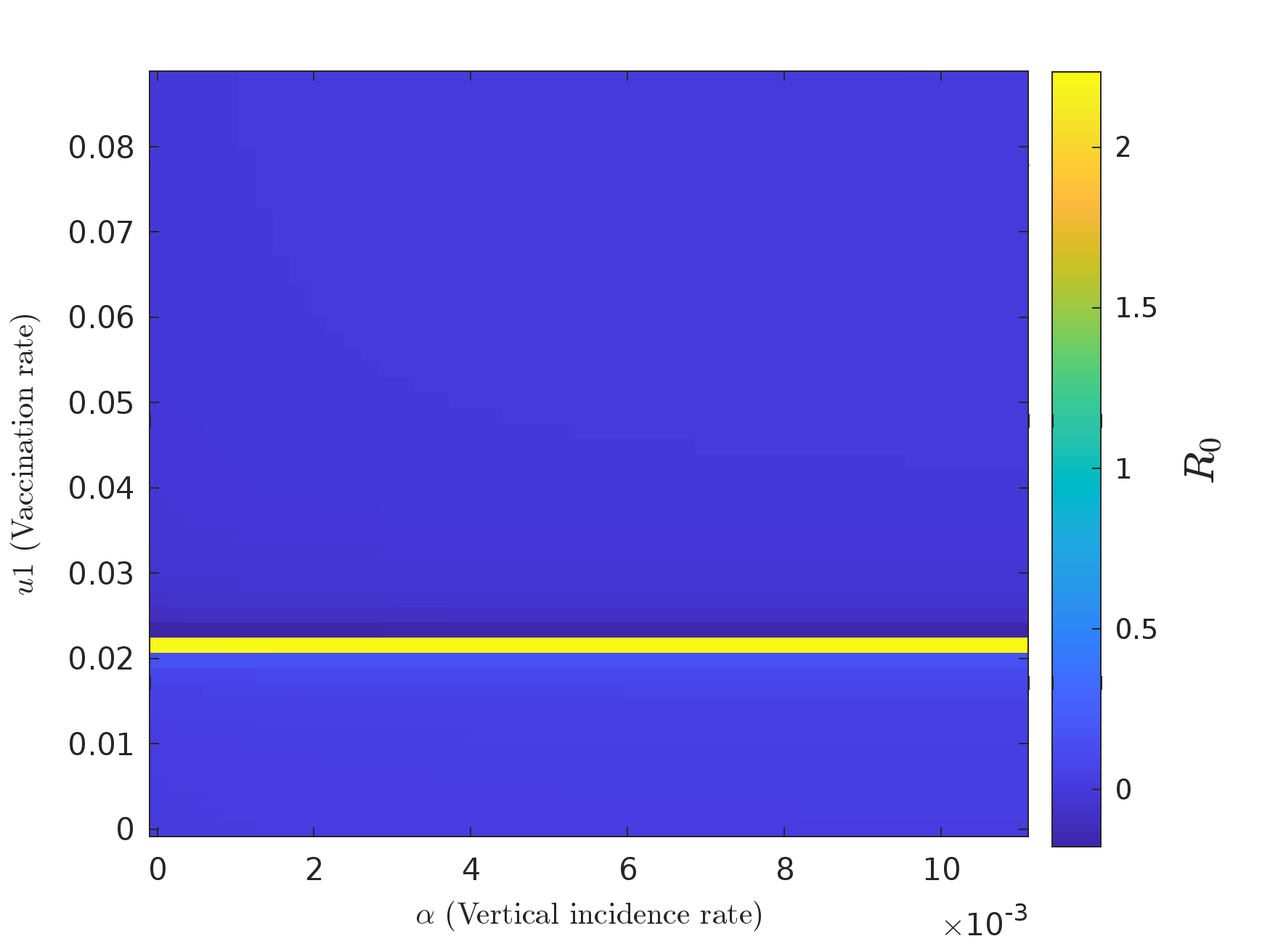}
  \end{minipage}
  \hfill
  \begin{minipage}{0.55\textwidth}
    \includegraphics[width=\linewidth]{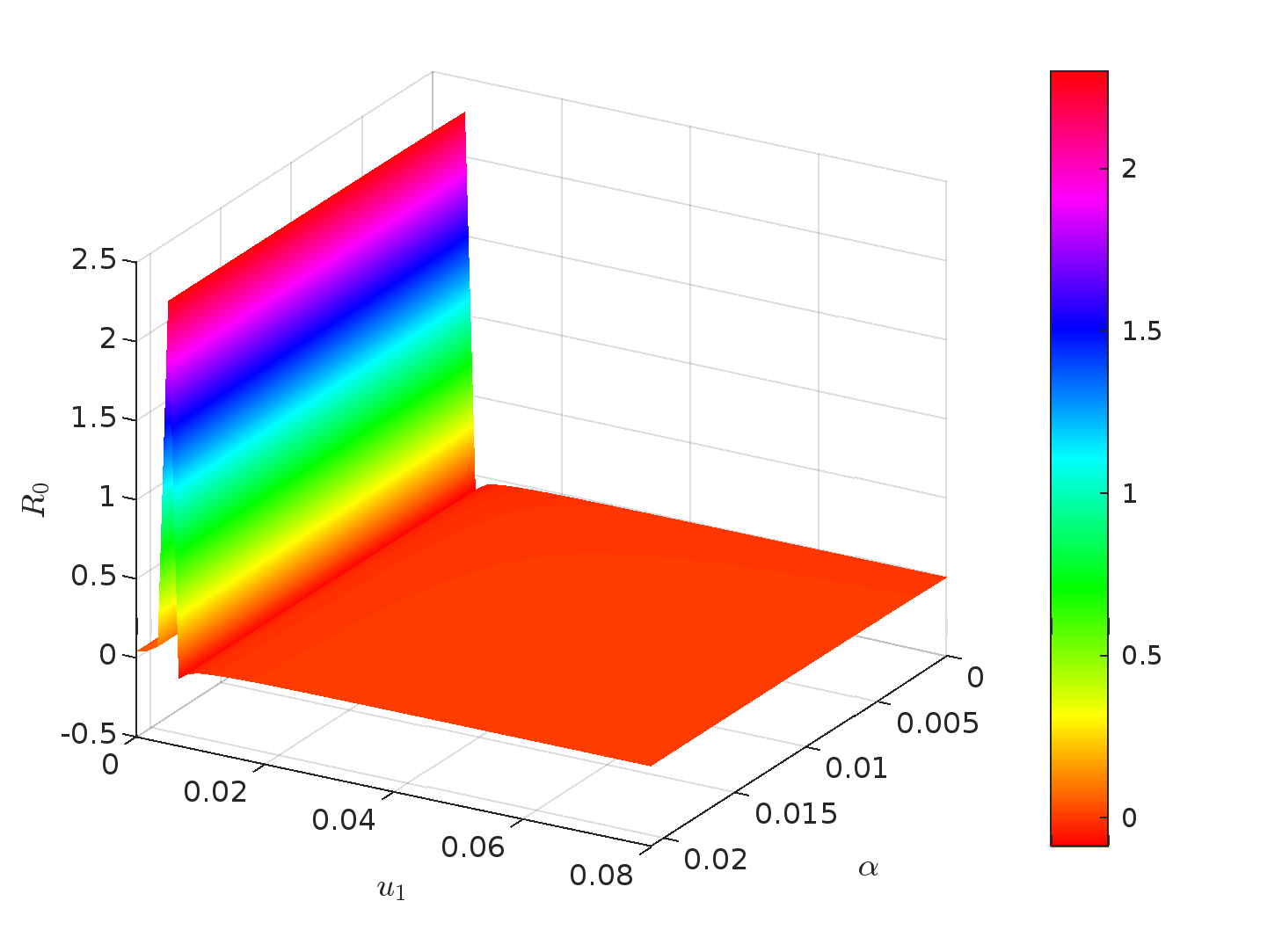}
  \end{minipage}
  \caption{Heatmap and Surface plot of $R_0$ when varying $\alpha$ and $u_1$.}
  \label{Sens_10}
\end{figure}

\begin{figure}[htbp]
  \centering
  \begin{minipage}{0.40\textwidth}
    \includegraphics[width=\linewidth]{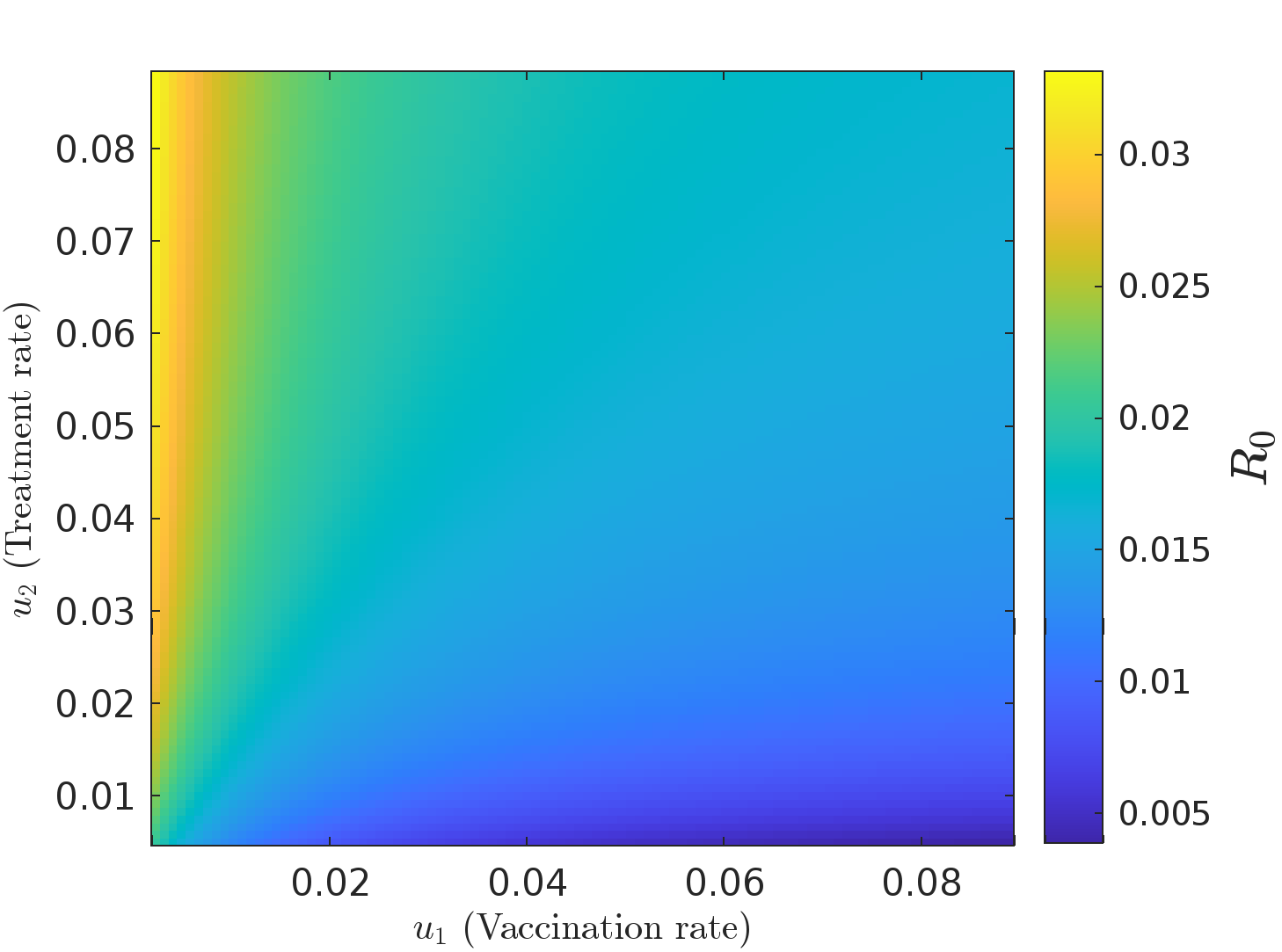}
  \end{minipage}
  \hfill
  \begin{minipage}{0.55\textwidth}
    \includegraphics[width=\linewidth]{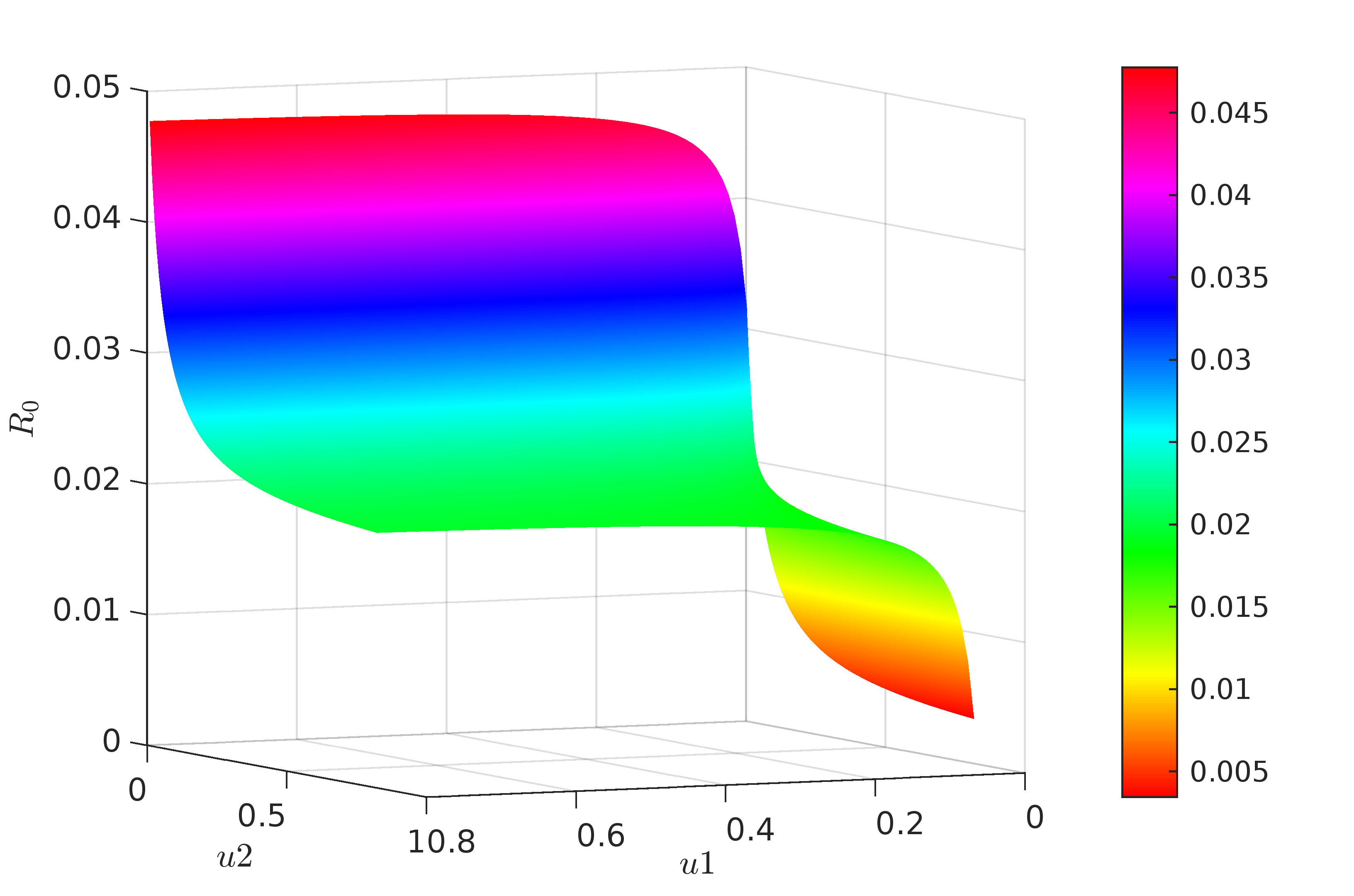}
  \end{minipage}
  \caption{Heatmap and Surface plot of $R_0$ when varying $u_1$ and $u_2$.}
  \label{Sens_11}
\end{figure}

\begin{figure}[htbp]
  \centering
  \begin{minipage}{0.40\textwidth}
    \includegraphics[width=\linewidth]{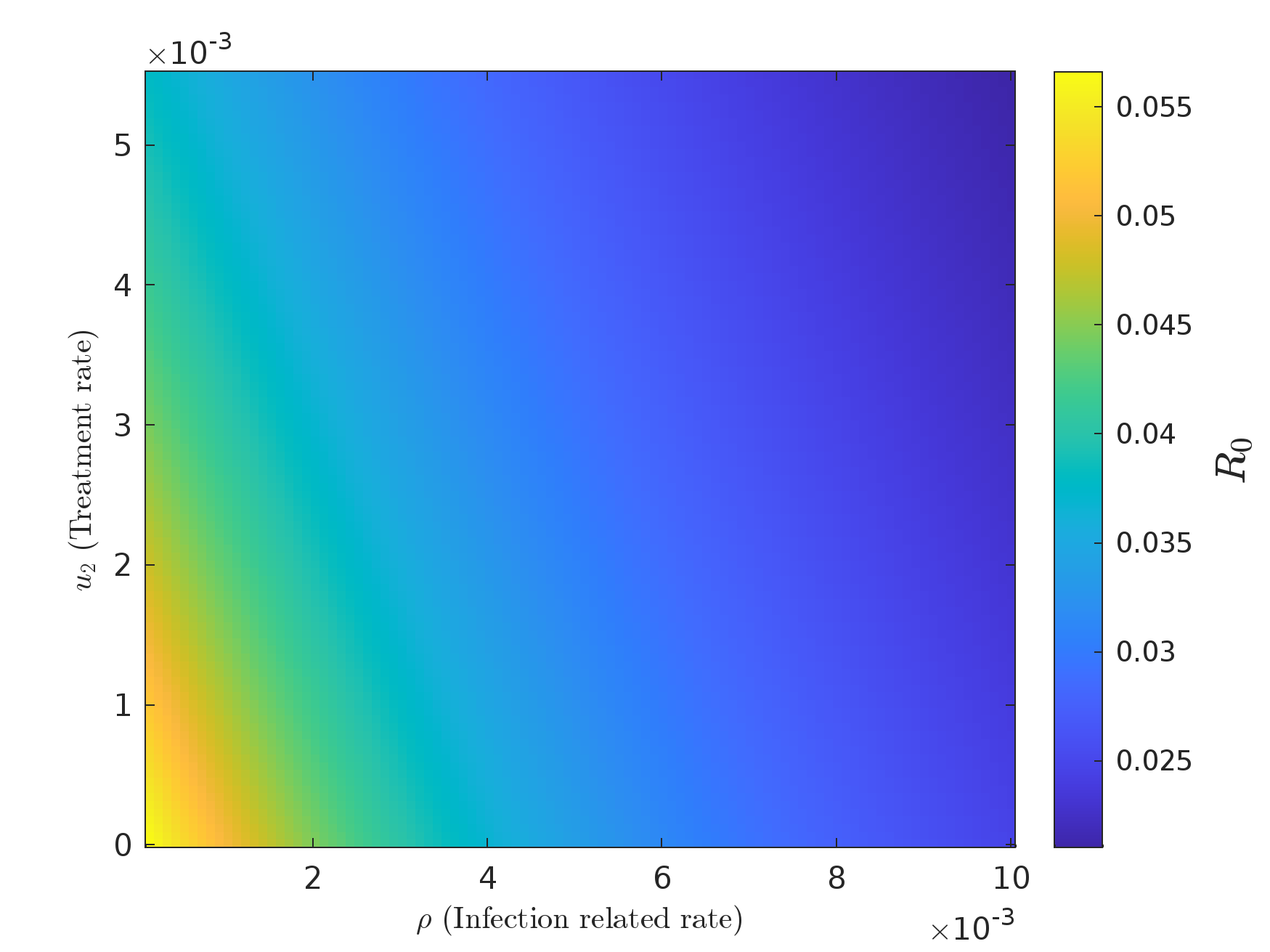}
  \end{minipage}
  \hfill
  \begin{minipage}{0.55\textwidth}
    \includegraphics[width=\linewidth]{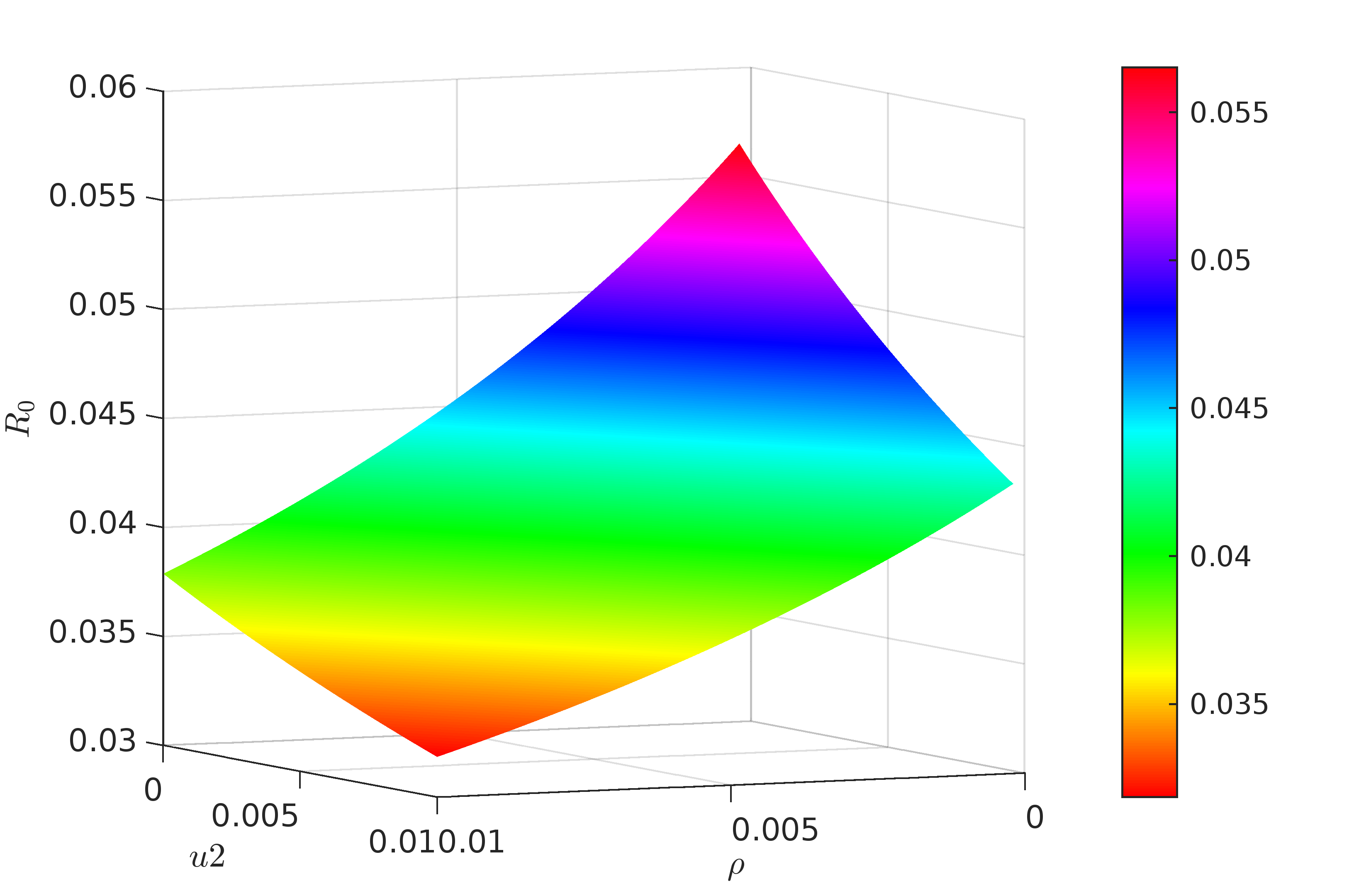}
  \end{minipage}
  \caption{Heatmap and Surface plot of $R_0$ when varying $\rho$ and $u_2$.}
  \label{Sens_12}
\end{figure}

\hfill
\begin{figure}[htbp]
  \centering
  \begin{subfigure}[htbp]{0.65\textwidth}
    \includegraphics[width=\textwidth]{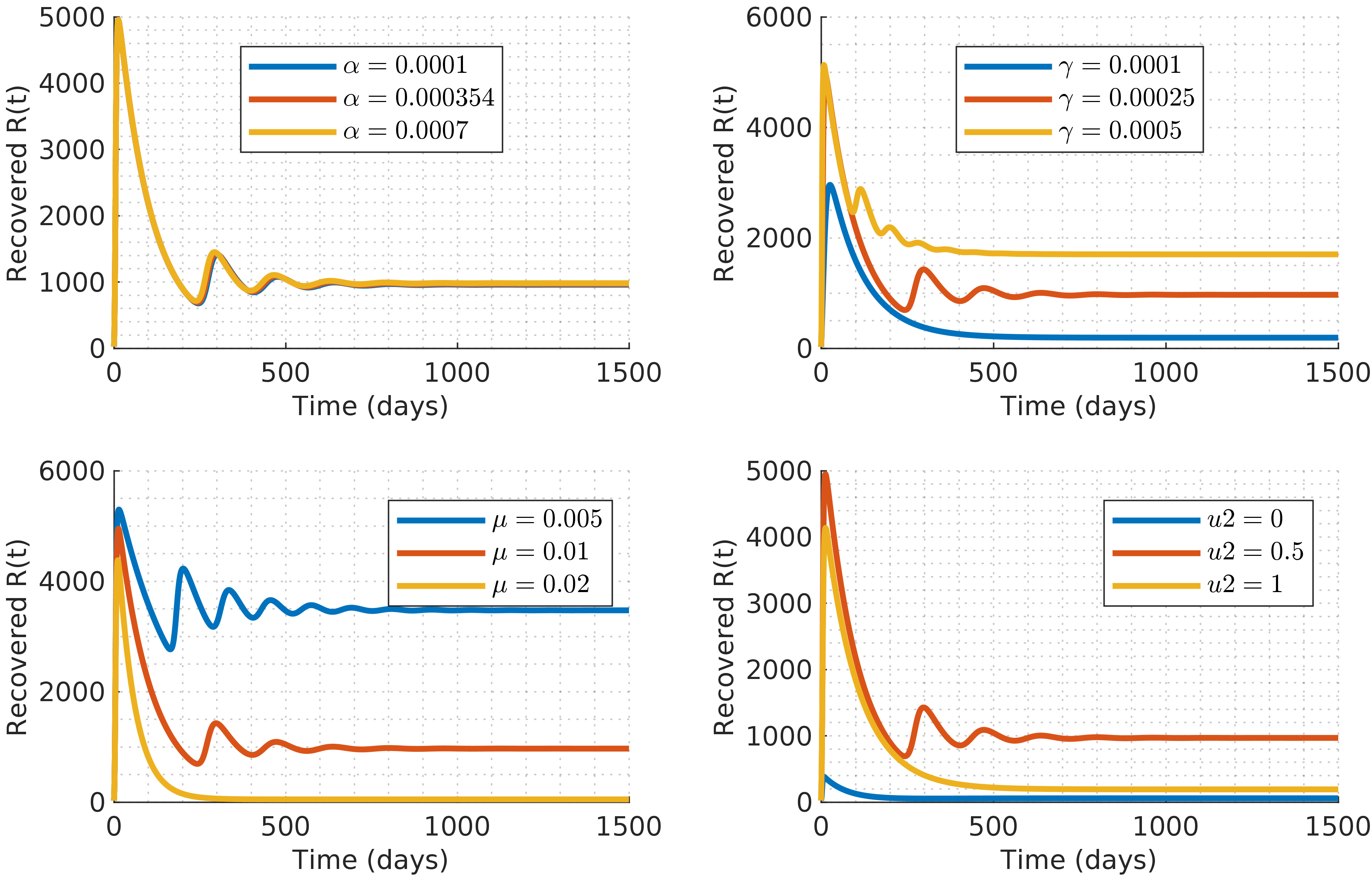}
    \caption{Effect of different values of $\alpha$, $\gamma$, $\mu$, and $u_2$ on R(t)}
    \label{fig:com1}
  \end{subfigure}

  \vspace{1cm}
  \begin{subfigure}[htbp]{0.65\textwidth}
    \includegraphics[width=\textwidth]{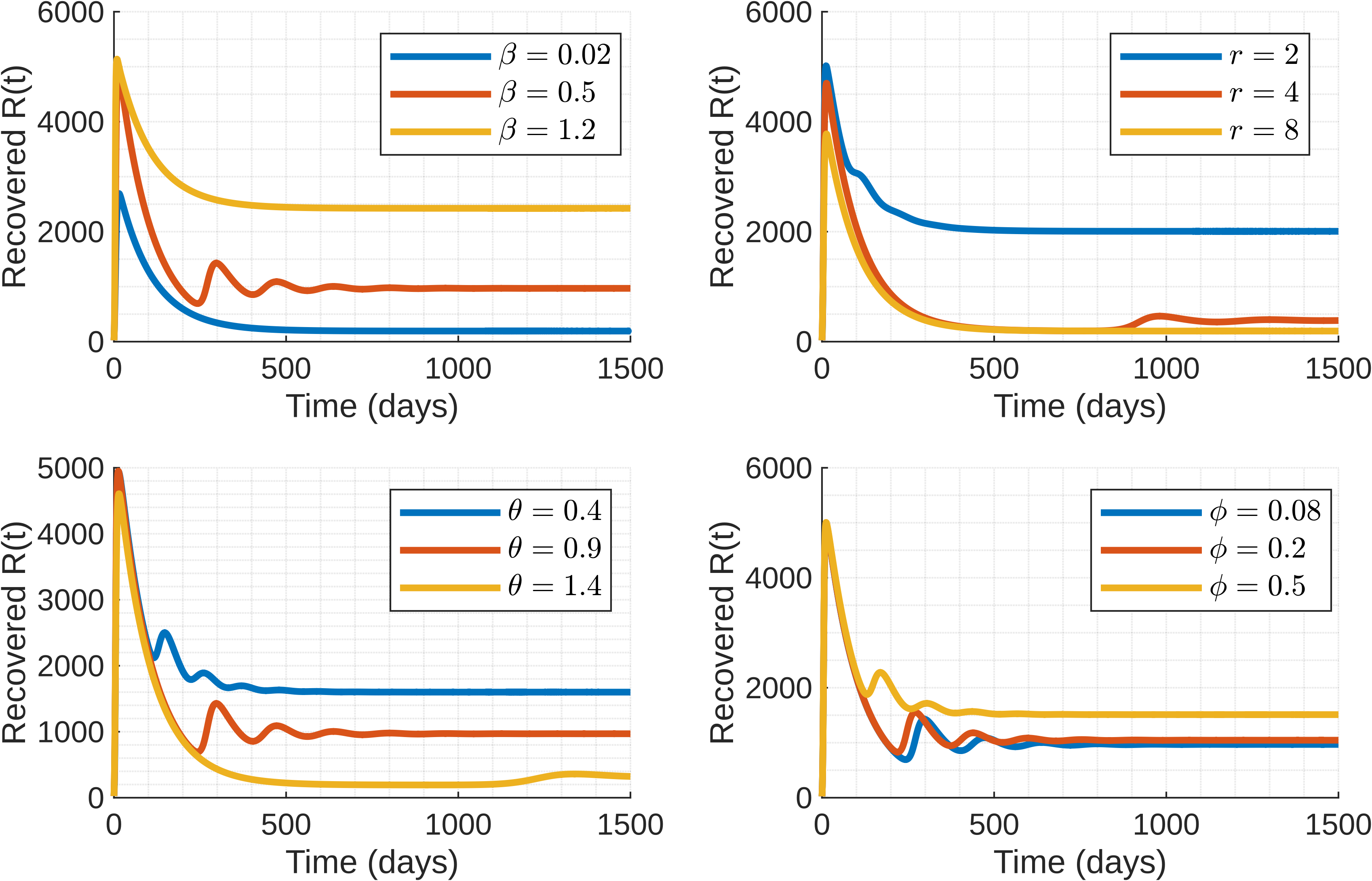}
    \caption{Effect of different values of $\beta$, $r$, $\theta$, and $\phi$ on R(t)}
    \label{fig:com2}
  \end{subfigure}
  \caption{Effect of Parameter Variations on Recovered Population R(t)}
  \label{fig:combined}
\end{figure}

\newpage
\subsection{Dynamics of model on the basis of \texorpdfstring{$\mathcal{R}_0$}{R0}}
According to the theorem~\ref{Theorem1}, the disease-free equilibrium (DFE) of the model~\eqref{model eqs.} is locally asymptotically stable whenever the basic reproduction number satisfies \( \mathcal{R}_0 < 1 \). This implies that, under this condition, the infection cannot invade the population and will eventually die out. But, once \( \mathcal{R}_0 > 1 \), the disease-free equilibrium loses stability and the model allows for the presence of an endemic state. In this case, the infection remains in the population throughout the entire simulation, with the endemic state being locally asymptotically stable, meaning that the disease or infection remains in the population and persists indefinitely. This behavior is illustrated in figures \ref{fig: Convergence 01} and \ref{fig: Convergence 02}. This result aligns with real-world expectations, where diseases tend to persist if transmission outweighs recovery or intervention efforts. While theorem~\ref{Theorem4} suggests that when \( \mathcal{R}_0 > 1 \), the model is globally asymptotically stable at the endemic equilibrium. Under this condition, the infection persists in the population over time, regardless of what we choose for the initial conditions, and aligns with real-world epidemiological observations where sustained transmission occurs if the infection rate exceeds the combined recovery and intervention rates. See fig. \ref{Global convergence}. The simulations were carried out using the parameter values presented in Table \ref{tab:R0_parameters}. The resulting graphs are given below to illustrate the system behavior under the specified conditions.

\begin{table}[htbp]
\centering
\renewcommand{\arraystretch}{1.2}
\begin{tabular}{>{\centering\arraybackslash}p{2.2cm} >{\centering\arraybackslash}p{5.5cm} >{\centering\arraybackslash}p{2.2cm} >{\centering\arraybackslash}p{2.2cm}}
\hline
\textbf{Symbol} & \textbf{Parameter Description} & \textbf{\(\mathcal{R}_0\) at DFE} & \textbf{\(\mathcal{R}_0\) at EE} \\
\hline
\(\lambda\) & Recruitment rate & 25 & 25 \\
\(\alpha\) & Vertical transmission rate & 0.0001 & 0.000354 \\
\(\gamma\) & Infection rate from acute class & 0.00008 & 0.00025 \\
\(\beta\) & Infection rate from chronic class & 0.0006 & 0.02 \\
\(\sigma\) & Spontaneous recovery rate & 0.09 & 0.069 \\
\(u_1\) & Vaccination rate & 0.006 & 0.000835 \\
\(u_2\) & Treatment rate & 0.008 & 0.00995 \\
\(\phi\) & Treatment saturation parameter & 0.002 & 0.08 \\
\(\mu\) & Natural death rate & 0.02 & 0.01 \\
\(\theta\) & Acute-to-chronic transition rate & 0.000002 & 0.9 \\
\(\rho\) & Disease-induced death rate & 0.008 & 0.0028 \\
\(r\) & Treatment recovery coefficient & 5 & 3 \\
\hline
\end{tabular}
\caption{Parameter values used for simulating low and high \(\mathcal{R}_0\) dynamics.}
\label{tab:R0_parameters}
\end{table}

\begin{figure}[htbp]
  \centering
  \begin{subfigure}[htbp]{0.62\textwidth}
    \includegraphics[width=\textwidth]{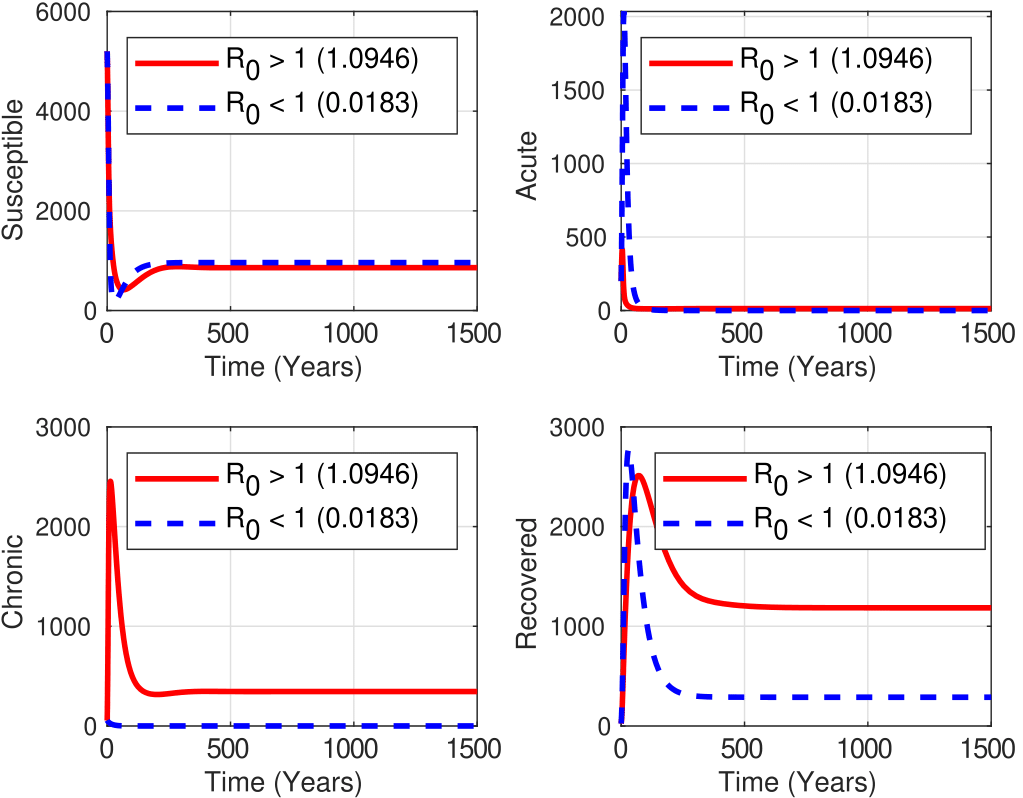}
    \caption{Time series simulation of the model \eqref{model eqs.}, showing the convergence of the solution trajectories.}
    \label{fig: Convergence 01}
  \end{subfigure}

  \begin{subfigure}[htbp]{0.58\textwidth}
    \includegraphics[width=\textwidth]{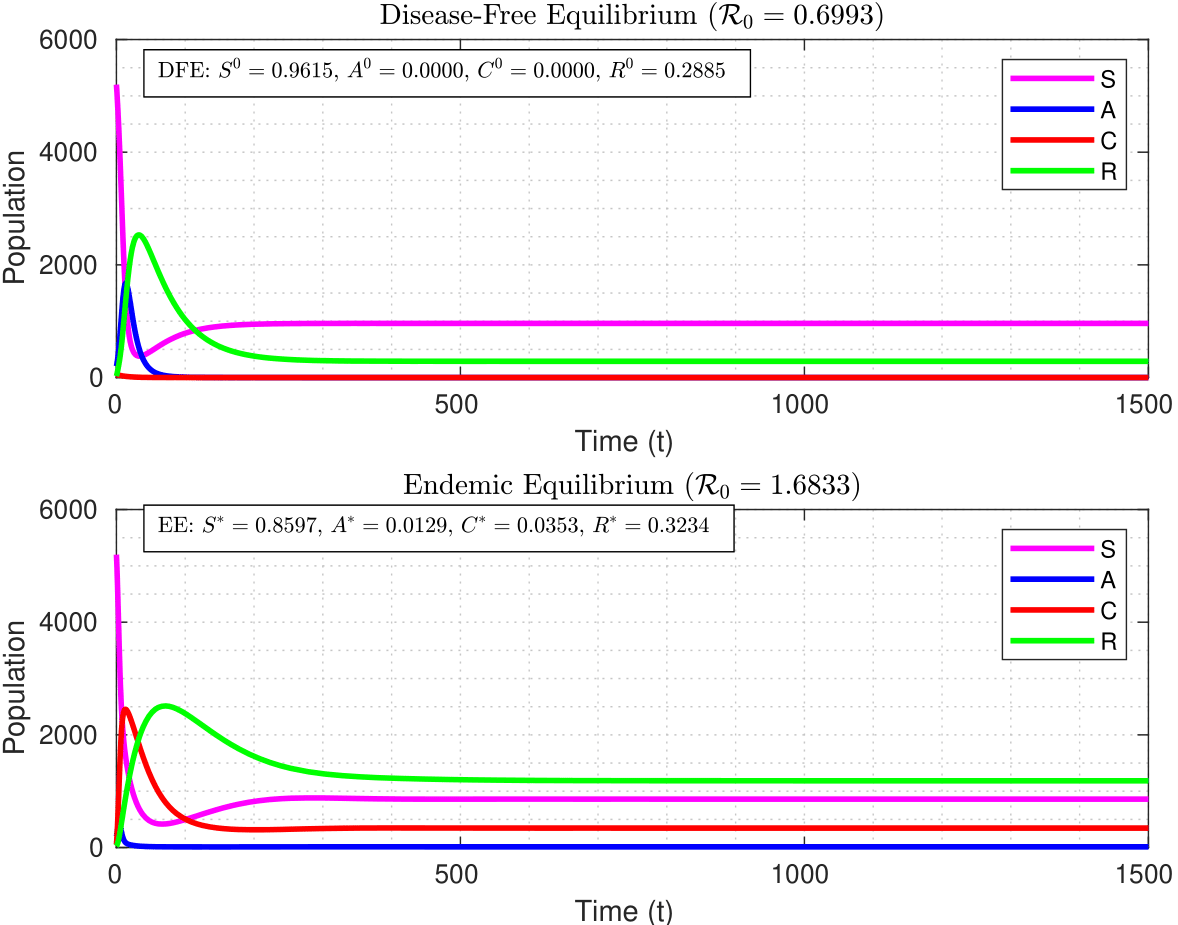}
    \caption{Time series simulation of the model \eqref{model eqs.}, showing the convergence of the solution trajectories.}
    \label{fig: Convergence 02}
  \end{subfigure}
  \caption{Simulation of the model \eqref{model eqs.} illustrating the temporal dynamics of the susceptible (\(S\)), acute infected (\(A\)), chronic infected (\(C\)), and recovered (\(R\)) compartments. The parameters were chosen such that \( \mathcal{R}_0 < 1 \) (blue dotted line) and \( \mathcal{R}_0 > 1 \) (red line). In the first case, the infection gradually dies out, and the system approaches the disease-free equilibrium. In contrast, when \(\mathcal{R}_0 > 1\), the infection persists and the system stabilizes at the endemic equilibrium.} 
\end{figure}

\begin{figure}[htbp]
  \centering
  \begin{minipage}{0.7\textwidth}
    \includegraphics[width=\linewidth]{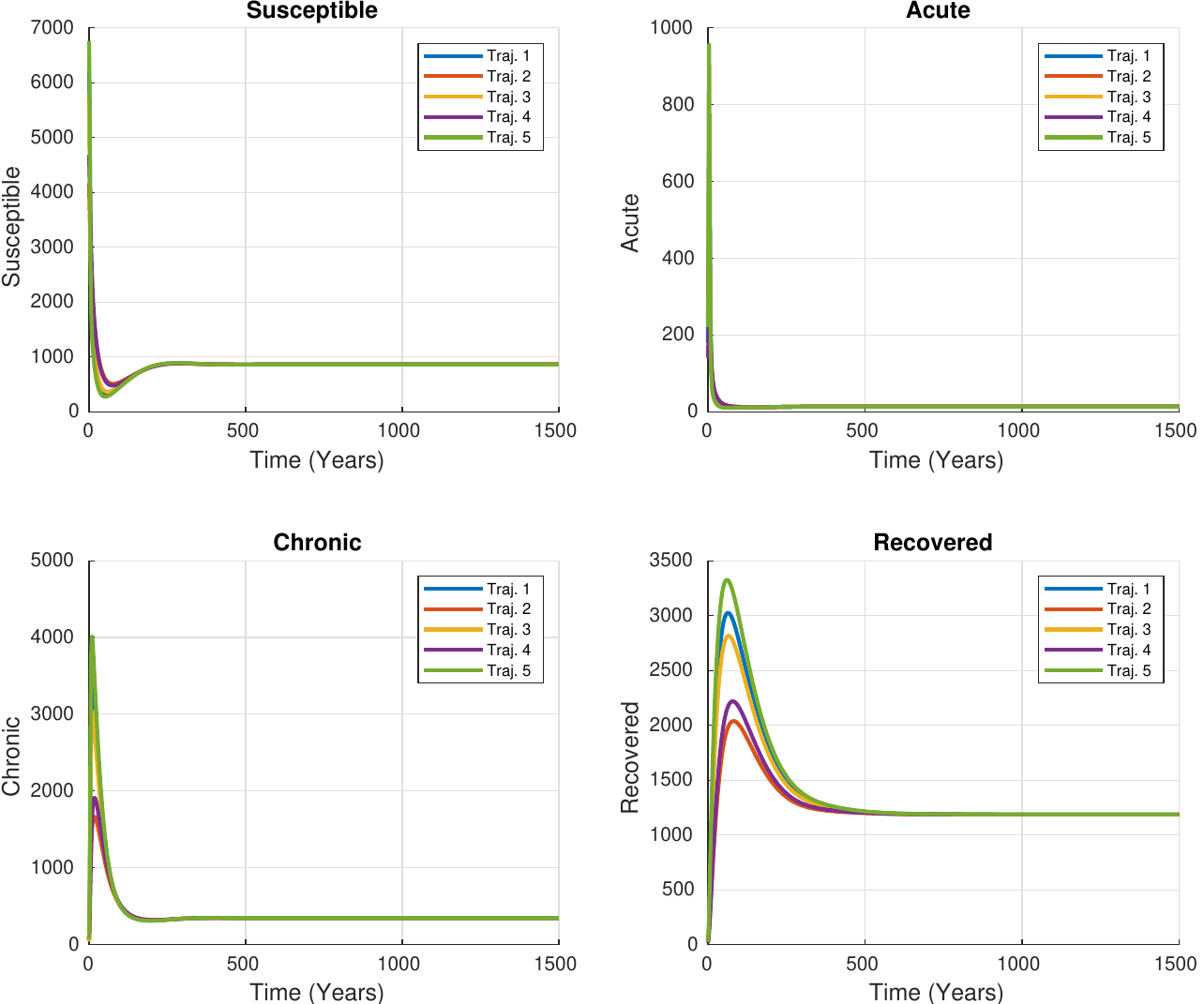}
  \end{minipage}
  \caption{Global stability of the endemic equilibrium when \( \mathcal{R}_0 > 1 \), illustrated for different initial conditions. All solution trajectories converge to the same endemic state, confirming the global asymptotic stability predicted by Theorem \ref{Theorem4}.}
  \label{Global convergence}
\end{figure}

\newpage
\subsection{Comparative Dynamics of Infected Classes Under Varying \texorpdfstring{$\mathcal{R}_0$}{R0}}

Understanding how the basic reproduction number (\(\mathcal{R}_0\)) affects infection dynamics is crucial in infectious disease modeling. A lower \(\mathcal{R}_0\) suggests that each infected individual causes less than one new infection on average, leading to disease elimination. Conversely, a higher \(\mathcal{R}_0\) indicates the potential for sustained transmission and endemicity. Comparison of infected dynamics for both scenarios. Solid lines correspond to the low \(\mathcal{R}_0\) case, and dashed lines correspond to the high \(\mathcal{R}_0\) case. The contrast clearly demonstrates the threshold-driven behavior of disease spread.

The following figure \ref{fig:R0_all} illustrates the time evolution of the acute (\(A\)) and chronic (\(C\)) infected compartments under different values of \(\mathcal{R}_0\).

\begin{figure}[htbp]
    \centering

    \begin{minipage}[htbp]{0.48\textwidth}
        \centering
        \includegraphics[width=\textwidth]{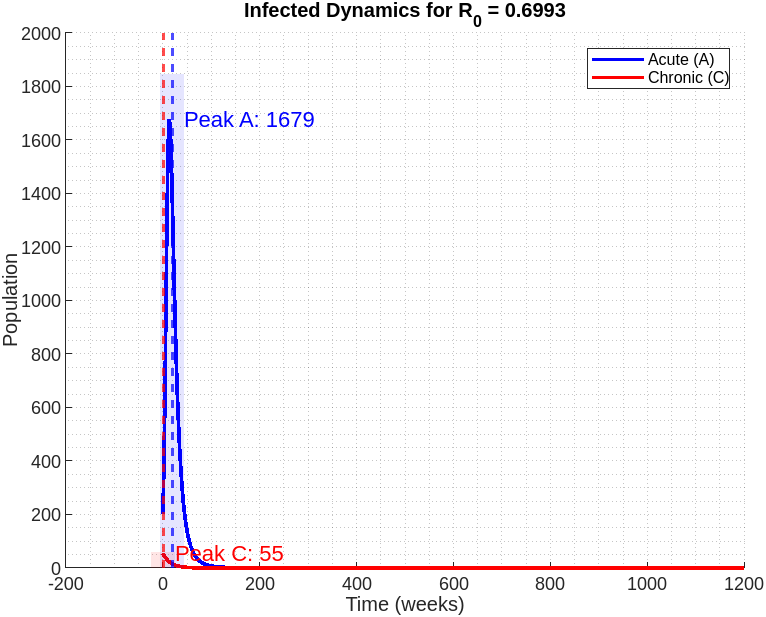}
        \caption*{\textbf{(a)} Infected dynamics under low $\mathcal{R}_0 = 0.6993$. Acute infections rise sharply but decline rapidly, with chronic infections staying low, indicating disease elimination.}
        \label{fig:lowR0_sub}
    \end{minipage}
    \hfill
    \begin{minipage}[htbp]{0.48\textwidth}
        \centering
        \includegraphics[width=\textwidth]{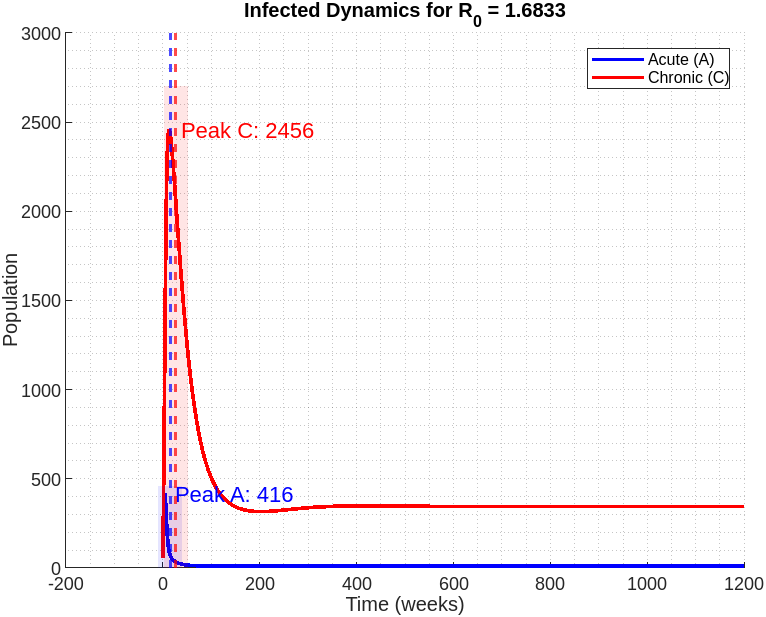}
        \caption*{\textbf{(b)} Infected dynamics under high $\mathcal{R}_0 = 1.6833$. Both acute and chronic infections persist at elevated levels, with chronic infection stabilizing at an endemic state.}
        \label{fig:highR0_sub}
    \end{minipage}

    \caption{Comparison of infected population dynamics under different basic reproduction numbers. Subfigure (a) demonstrates disease fade-out for $\mathcal{R}_0 < 1$, whereas (b) shows sustained transmission and endemic persistence for $\mathcal{R}_0 > 1$.}
    \label{fig:R0_all}
\end{figure}

\newpage
\subsection{Graphical illustration of optimal control analysis}
In this section, we analyses the impact of vaccination \(u_1\) and treatment \(u_2\) on the susceptible populations as well as on the infected populations in the case of optimal condition of preventive measures \(u_1\) and \(u_2\). To obtain these optimal conditions, we used the Pontryagin's maximum principle. In this method, we defined an objective functional called Hamiltonian H, subject to constraints are the equations of the model \eqref{model eqs.}. Here, cost weights are fixed within the objective functional \ref{Hamiltonian} and the simulation period is T = 1500 days. Then, using the adjoint equations and optimality condition, we obtained the optimal values of control measures \(u_1\) and \(u_2\) as \({u_1}^*\) and \({u_2}^*\). We used an iterative algorithm, the "forward-backward sweep method" to apply this principle. Based on this analysis, we present three strategies based on either single or combined control interventions.

\subsubsection{Optimal Strategy 1: Usage of vaccination (\texorpdfstring{$u_1$}{u1}) as preventive measure}
In this strategy, vaccination is applied to the susceptible population as an optimized control. From the graphs in Figure \ref{fig:control01}, it is evident that vaccination significantly reduces the number of acute and chronic infections, while boosting the recovered population. The control profile in Figure \ref{fig:profile01} shows that vaccination is most intense when infections are high and tapers off as the infected population declines over time.

\begin{figure}[htbp]
    \centering
    \includegraphics[width=0.8\linewidth]{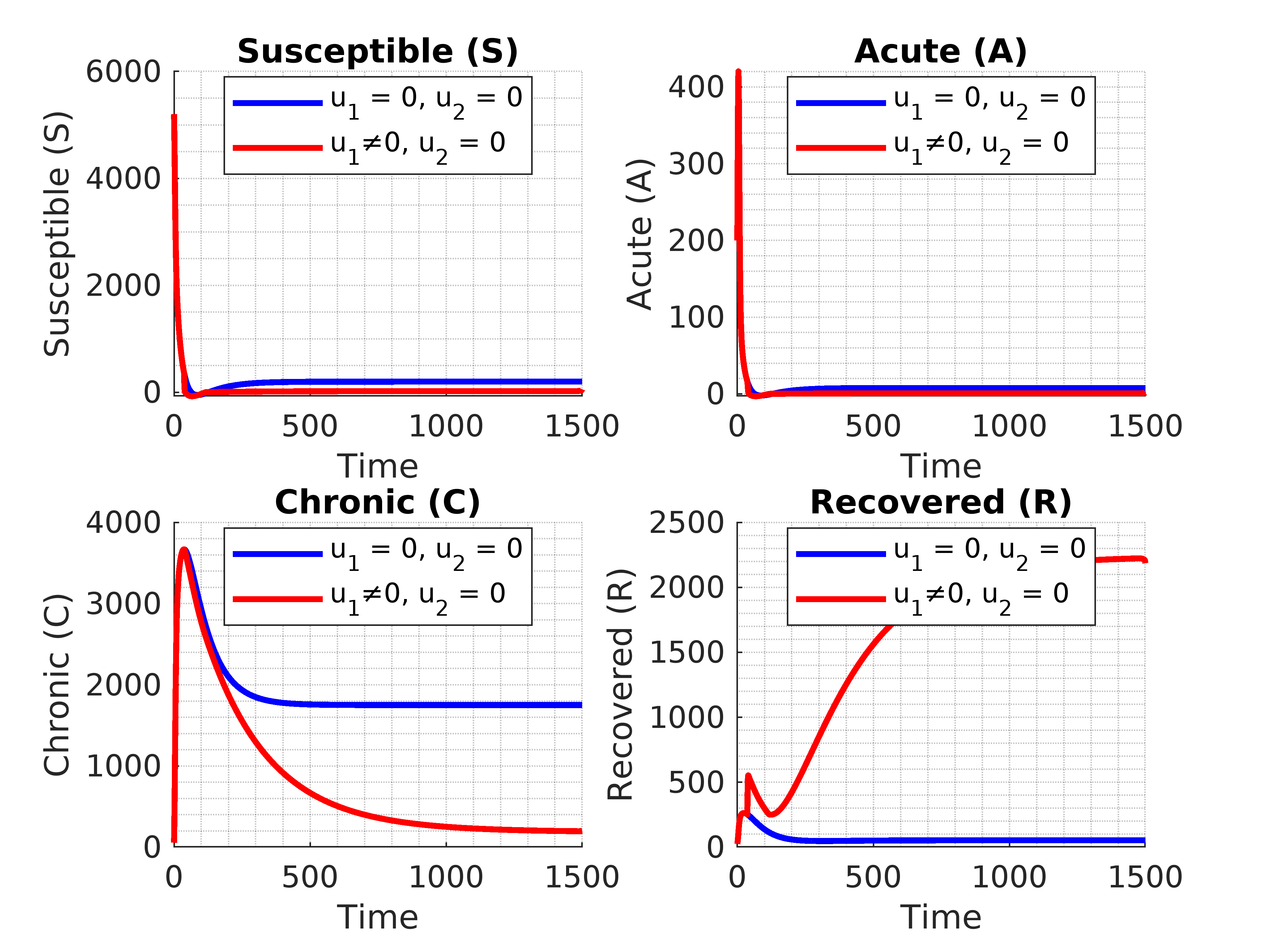}
    \caption{Impact of vaccination on disease dynamics. Blue curves: uncontrolled dynamics. Red curves: reduced acute and chronic cases with increased recovery under optimal vaccination.}
    \label{fig:control01}
\end{figure}

\begin{figure}[htbp]
    \centering
    \includegraphics[width=0.6\linewidth]{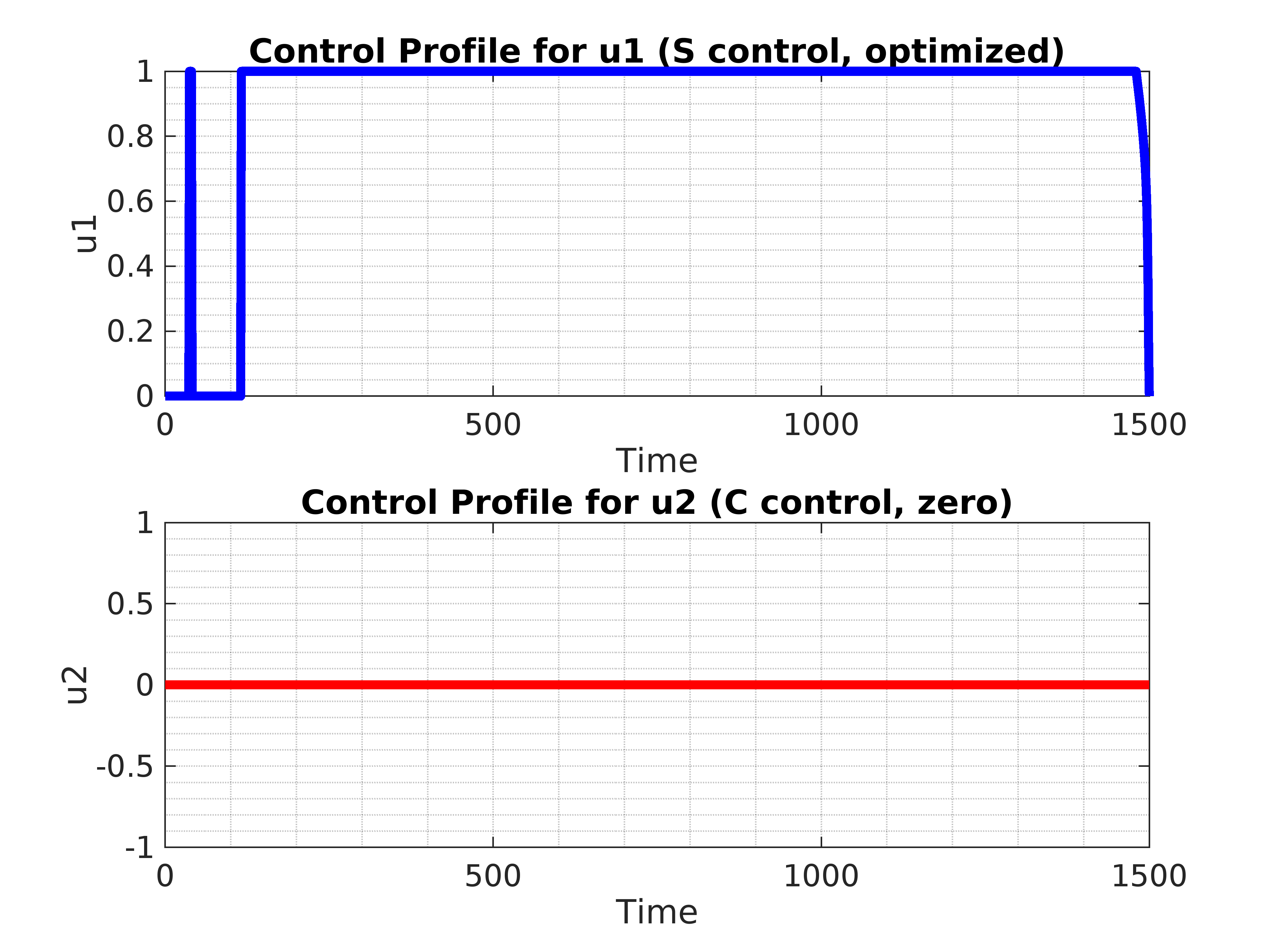}
    \caption{Control profile for \(u_1(t)\) (vaccination). Control effort is high initially and decreases as the number of infected individuals falls.}
    \label{fig:profile01}
\end{figure}

\subsubsection{Optimal Strategy 2: Usage of drugs treatment (\texorpdfstring{$u_2$}{u2}) as prevention of disease infection}
In this strategy, treatment is applied to the chronically infected population as an optimised control. From the graphs illustrated below in the fig.\ref{fig:control02}, we can observe that the treatment alone can reduce infections and promote recovery, although the effect is less pronounced than combined strategies. Fig. \ref{fig:profile02} shows the usage of vaccination over time, when it is needed most, and declines gradually after some time when there are very less number of infected people in the community.

\begin{figure}[htbp]
    \centering
    \includegraphics[width=0.8\linewidth]{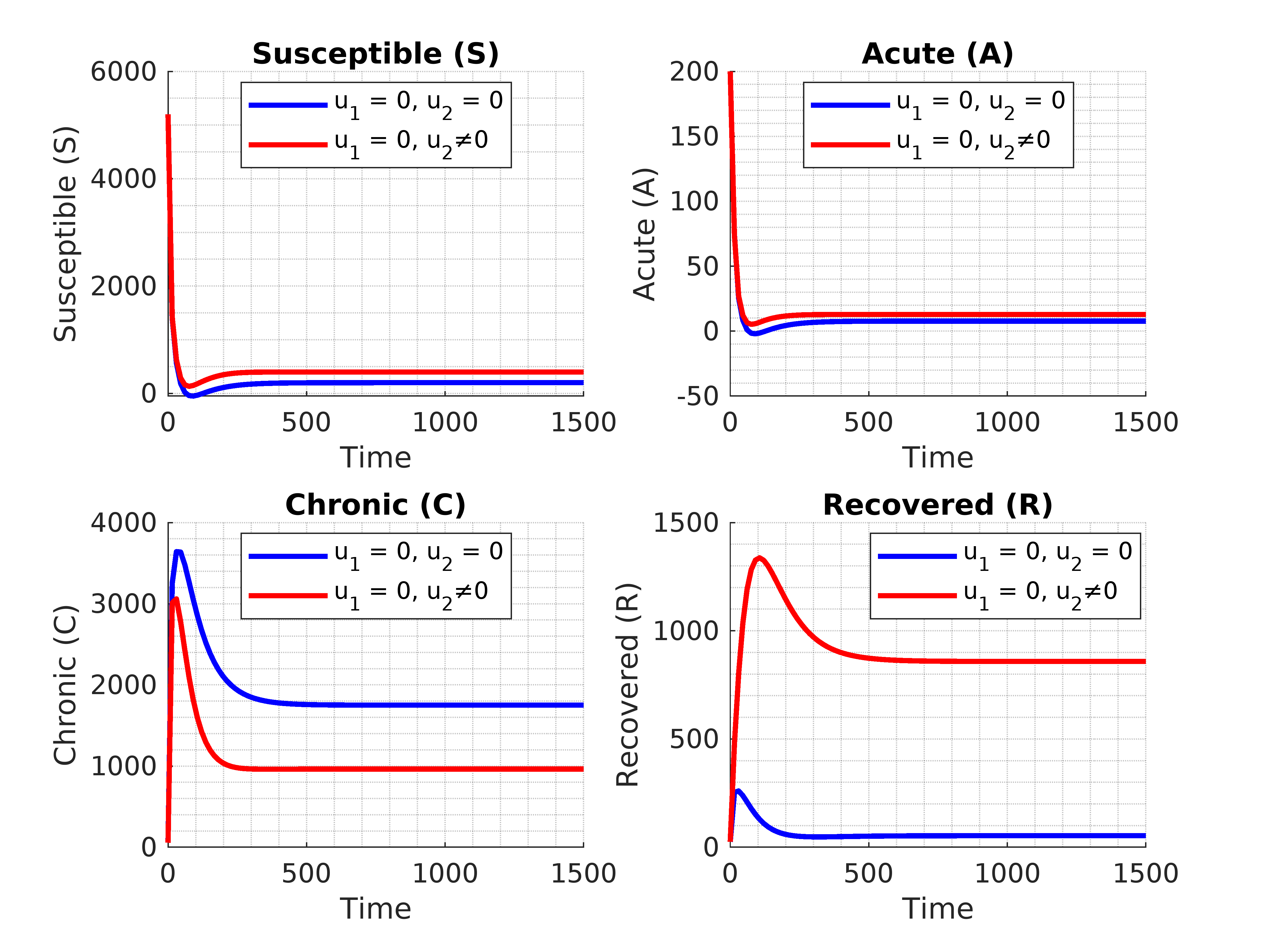}
    \caption{Impact of treatment on disease dynamics. Blue curves: no control. Red curves: improved recovery and reduced chronic cases under optimal treatment.}
    \label{fig:control02}
\end{figure}

\begin{figure}[htbp]
    \centering
    \includegraphics[width=0.6\linewidth]{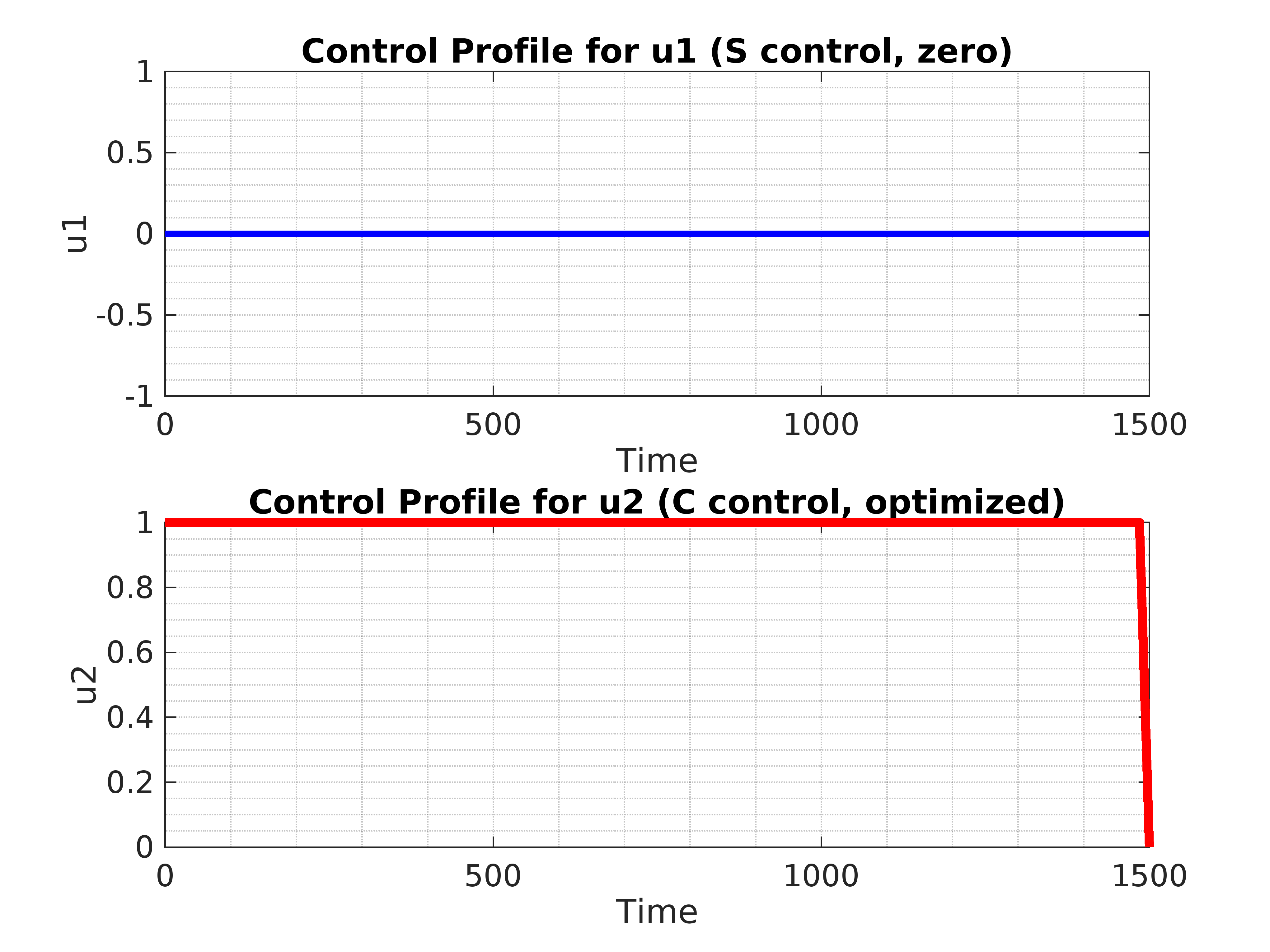}
    \caption{Control profile for \(u_2(t)\) (treatment).}
    \label{fig:profile02}
\end{figure}

\subsubsection{Optimal Strategy 3: Usage of vaccination (\texorpdfstring{$u_1$}{u1}) and drugs treatment (\texorpdfstring{$u_2$}{u2}) as combined prevention of disease infection}
In this strategy, we used vaccination on the susceptible compartment as an optimised control along with treatment. From the graphs illustrated below in the fig. \ref{fig:control12}, we can observe that the impact of vaccination on the population is significant and caused a steep decrement in the infected populations and increased the number of recovered populations. Fig. \ref{fig:profile12} shows the usage of vaccination over time, when it was needed most, and declines gradually after some time when there are very few numbers of infected individuals in the community. Comparing the optimal strategies for each single control, which comprises only vaccination and only treatment with multiple control measures, vaccination, and treatment both simultaneously, we got superior outcomes than the former strategies. By applying single preventive measures, we observe that there is a significant reduction in some compartments, like when we employ vaccination or treatment, we achieve considerable success in reducing susceptibility to infection or reducing the infection in the population. However, when we employed both control strategies simultaneously, infections were suppressed more effectively, and the recovered class grew significantly, indicating synergistic benefits of the combined intervention.

\begin{figure}[ht]
    \centering
    \includegraphics[width=0.8\linewidth]{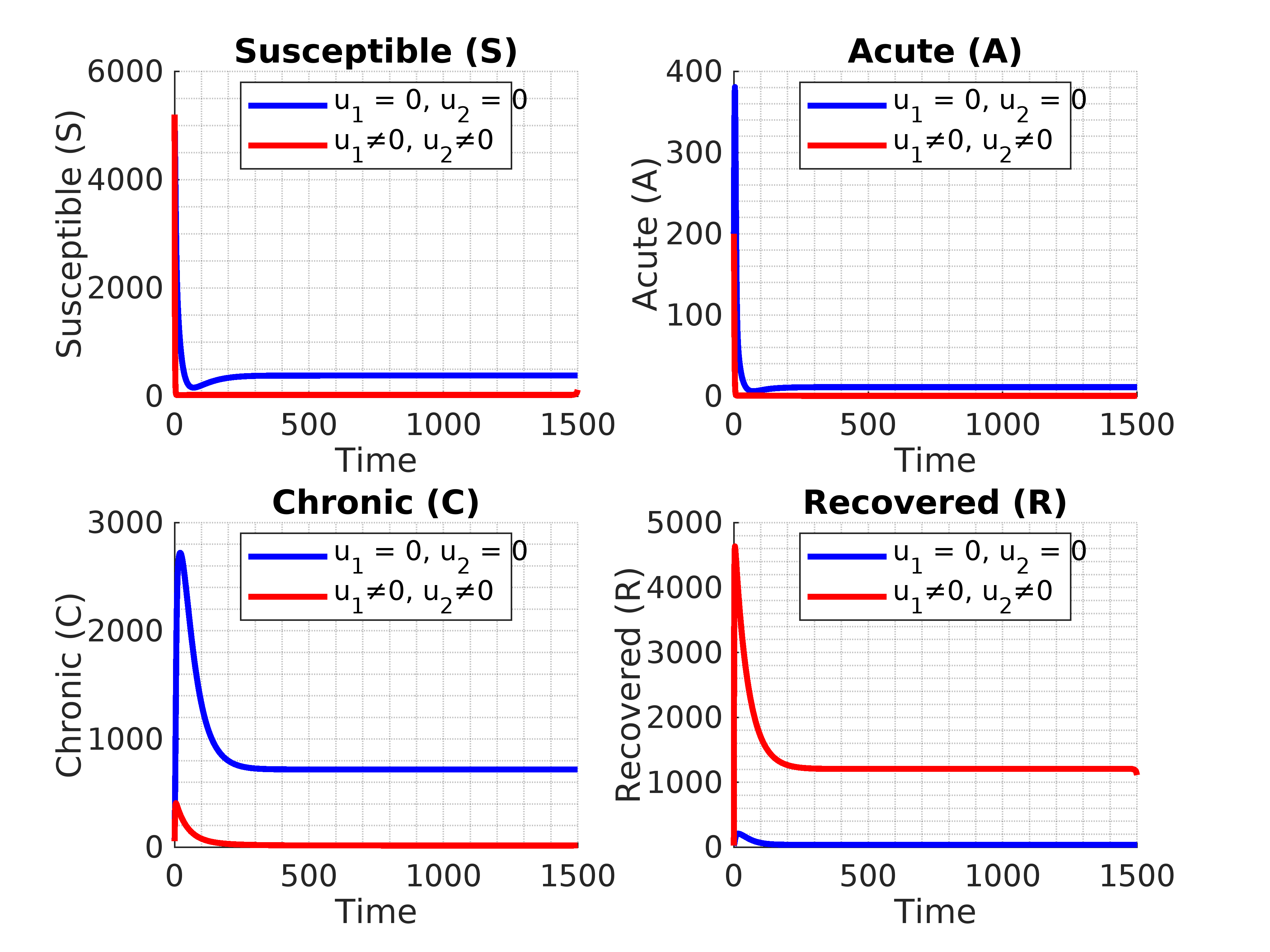}
    \caption{Impact of optimal usage of combined controls (vaccination and treatment) on the system. Blue curves correspond to outbreak dynamics without any controls, while red curves indicate a substantial reduction in the infected compartments (acute and chronic), with strong recovery.}
    \label{fig:control12}
\end{figure}

\begin{figure}[htbp]
    \centering
    \includegraphics[width=0.6\linewidth]{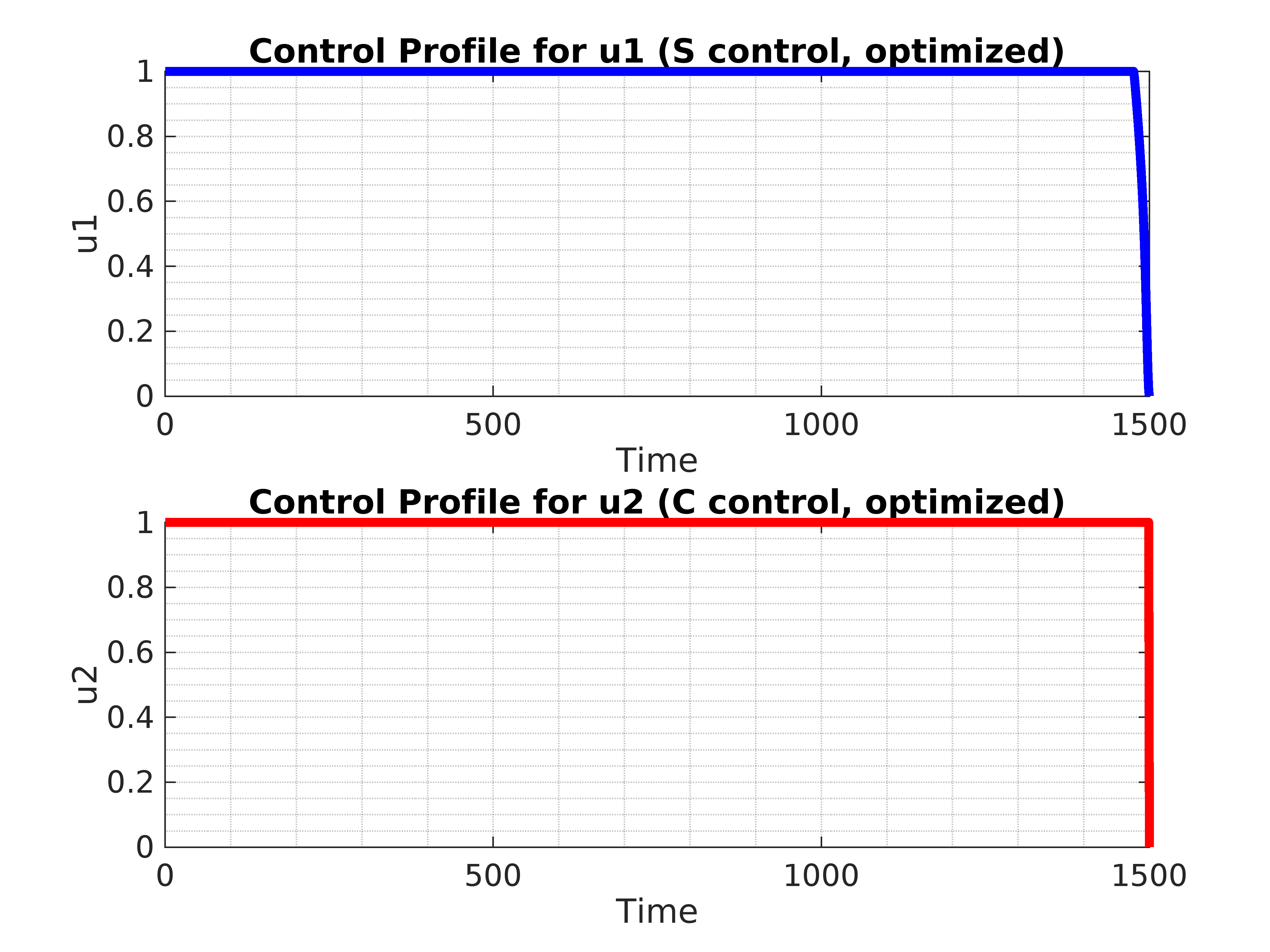}
    \caption{Control profiles for both \(u_1(t)\) (vaccination) and \(u_2(t)\) (treatment).}
    \label{fig:profile12}
\end{figure}

\pagebreak
\section{Conclusion}
Hepatitis B virus (HBV) infection remains endemic in numerous regions across the globe and poses a significant public health issue. The different infection transmission rates in the host population and vertical transmission, especially perinatal transmission from carrier mothers, are the two mainstays of HBV transmission dynamics. This trend of transmission is a major factor in the chronicity of HBV, which is particularly high in endemic areas where infantile exposure to the virus greatly enhances the risk of chronic infection. The persistence of HBV in the population, therefore, cannot be fully understood without accounting for these different kinds of transmission of disease and vertical transmission, both of which emphasize the need for targeted intervention strategies such as vaccination and drug treatment. Addressing these facets, we developed a novel mathematical model framework aimed at evaluating the spread of infection and the optimal control of HBV disease. The model's dynamics are rigorously analyzed through the computation of the basic reproduction number $R_0$ using the next-generation matrix method, followed by a global stability analysis of both the disease-free and endemic equilibrium points. To optimize the implementation of intervention strategies, we apply Pontryagin’s Maximum Principle, designing multiple control policies to minimize both the disease burden and economic cost. This work contributes a more biologically accurate and analytically rich model to the literature, offering new insights into the strategic control of HBV infection, particularly in regions where vaccination and treatment resources are limited.

\section*{Acknowledgment} The first author acknowledges financial support from the Maulana Azad National Fellowship (MANF), University Grants Commission (UGC), Government of India (Award No. F.No. PLNG/13/2023-PLANNING-NMDFC; Ref. No. 211610001674).

\section*{Authors contributions} All authors have contributed equally.

\section*{Data availability} This manuscript has no associated data.

\section*{Declarations}
\section*{Conflict of interests} There is no conflict of interest regarding this work.

\bibliographystyle{ieeetr}
\bibliography{biblog}

@article{kermack1927contribution,
  title={A contribution to the mathematical theory of epidemics},
  author={Kermack, William Ogilvy and McKendrick, Anderson G},
  journal={Proceedings of the royal society of london. Series A, Containing papers of a mathematical and physical character},
  volume={115},
  number={772},
  pages={700--721},
  year={1927},
  publisher={The Royal Society London}
}

@book{anderson1991infectious,
  title={Infectious diseases of humans: dynamics and control},
  author={Anderson, Roy M and May, Robert M},
  year={1991},
  publisher={Oxford university press}
}

@misc{who2024global,
  title={Global Hepatitis Report 2024: Action for Access in Low-and Middle-Income Countries},
  author={WHO},
  year={2024},
  publisher={World Health Organization Geneva, Switzerland}
}

@article{easterbrook20242024,
  title={WHO 2024 hepatitis B guidelines: an opportunity to transform care},
  author={Easterbrook, Philippa J and Luhmann, Niklas and Bajis, Sahar and Min, Myat Sandi and Newman, Morkor and Lesi, Olufunmilayo and Doherty, Meg C},
  journal={The lancet Gastroenterology \& hepatology},
  volume={9},
  number={6},
  pages={493--495},
  year={2024},
  publisher={Elsevier}
}

@article{seeger2000hepatitis,
  title={Hepatitis B virus biology},
  author={Seeger, Christoph and Mason, William S},
  journal={Microbiology and molecular biology reviews},
  volume={64},
  number={1},
  pages={51--68},
  year={2000},
  publisher={American Society for Microbiology}
}

@article{cui2023global,
  title={Global reporting of progress towards elimination of hepatitis B and hepatitis C},
  author={Cui, Fuqiang and Blach, Sarah and Mingiedi, Casimir Manzengo and Gonzalez, Monica Alonso and Alaama, Ahmed Sabry and Mozalevskis, Antons and S{\'e}guy, Nicole and Rewari, Bharat Bhushan and Chan, Po-Lin and Le, Linh-vi and others},
  journal={The lancet Gastroenterology \& hepatology},
  volume={8},
  number={4},
  pages={332--342},
  year={2023},
  publisher={Elsevier}
}

@article{world2024guidelines,
  title={Guidelines for the prevention, diagnosis, care and treatment for people with chronic hepatitis B infection (text extract): executive summary},
  author={World Health Organization},
  journal={Infectious Diseases \& Immunity},
  volume={4},
  number={03},
  pages={103--105},
  year={2024},
  publisher={Chinese Medical Association Publishing House Co., Ltd 42 Dongsi Xidajie~…}
}

@article{worldint,
  title={www. who. int/news-room/fact-sheets/detail/hepatitis-b},
  author={World Health Organization},
  year={2024},

}

@inproceedings{beard2024combined,
  title={Combined “test and treat” campaigns for human immunodeficiency virus, hepatitis B, and hepatitis C: a systematic review to provide evidence to support World Health Organization treatment guidelines},
  author={Beard, Natasha and Hill, Andrew},
  booktitle={Open Forum Infectious Diseases},
  volume={11},
  number={2},
  pages={ofad666},
  year={2024},
  organization={Oxford University Press US}
}

@article{mann2011modelling,
  title={Modelling the epidemiology of hepatitis B in New Zealand},
  author={Mann, Joanne and Roberts, Mick},
  journal={Journal of theoretical biology},
  volume={269},
  number={1},
  pages={266--272},
  year={2011},
  publisher={Elsevier}
}

@article{hethcote2000mathematics,
  title={The mathematics of infectious diseases},
  author={Hethcote, Herbert W},
  journal={SIAM review},
  volume={42},
  number={4},
  pages={599--653},
  year={2000},
  publisher={SIAM}
}

@article{castillo2002computation,
  title={On the computation of r. And its role on global stability carlos castillo-chavez∗, zhilan feng, and wenzhang huang},
  author={Castillo-Chavez, Carlos},
  journal={Mathematical approaches for emerging and reemerging infectious diseases: an introduction},
  volume={1},
  pages={229},
  year={2002}
}

@article{diekmann1990definition,
  title={On the definition and the computation of the basic reproduction ratio R 0 in models for infectious diseases in heterogeneous populations},
  author={Diekmann, Odo and Heesterbeek, Johan Andre Peter and Metz, Johan Anton Jacob},
  journal={Journal of mathematical biology},
  volume={28},
  pages={365--382},
  year={1990},
  publisher={Springer}
}

@book{diekmann2000mathematical,
  title={Mathematical epidemiology of infectious diseases: model building, analysis and interpretation},
  author={Diekmann, Odo and Heesterbeek, Johan Andre Peter},
  volume={5},
  year={2000},
  publisher={John Wiley \& Sons}
}

@article{diekmann2010construction,
  title={The construction of next-generation matrices for compartmental epidemic models},
  author={Diekmann, Odo and Heesterbeek, Johan Andre Peter and Roberts, Michael G},
  journal={Journal of the royal society interface},
  volume={7},
  number={47},
  pages={873--885},
  year={2010},
  publisher={The Royal Society}
}

@article{van2002reproduction,
  title={Reproduction numbers and sub-threshold endemic equilibria for compartmental models of disease transmission},
  author={Van den Driessche, Pauline and Watmough, James},
  journal={Mathematical biosciences},
  volume={180},
  number={1-2},
  pages={29--48},
  year={2002},
  publisher={Elsevier}
}

@book{castillo2002mathematical,
  title={Mathematical approaches for emerging and reemerging infectious diseases: models, methods, and theory},
  author={Castillo-Chavez, Carlos and Blower, Sally and van den Driessche, Pauline and Kirschner, Denise and Yakubu, Abdul-Aziz},
  volume={126},
  year={2002},
  publisher={Springer Science \& Business Media}
}

@article{raezah2023exploring,
  title={Exploring the effectiveness of control measures and long-term behavior in Hepatitis B: An analysis of an endemic model with horizontal and vertical transmission},
  author={Raezah, Aeshah A and Raouf, Abdur and Zarin, Rahat and Khan, Amir},
  journal={Results in Physics},
  volume={53},
  pages={106966},
  year={2023},
  publisher={Elsevier}
}

@article{khan2023modelling,
  title={Modelling the dynamics of acute and chronic hepatitis B with optimal control},
  author={Khan, Tahir and Rihan, Fathalla A and Ahmad, Hijaz},
  journal={Scientific reports},
  volume={13},
  number={1},
  pages={14980},
  year={2023},
  publisher={Nature Publishing Group UK London}
}

@article{gaff2009optimal,
  title={Optimal control applied to vaccination and treatmentstrategies for various epidemiological models},
  author={Gaff, Holly and Schaefer, Elsa},
  journal={Mathematical biosciences \& engineering},
  volume={6},
  number={3},
  pages={469--492},
  year={2009},
  publisher={Mathematical Biosciences \& Engineering}
}

@inproceedings{lasalle1976stability,
  title={The stability of dynamical systems, society for industrial and applied mathematics, Philadelphia, pa., 1976},
  author={LaSalle, JP},
  booktitle={With an appendix:“Limiting equations and stability of nonautonomous ordinary differential equations” by Z. Artstein, Regional Conference Series in Applied Mathematics},
  year={1976}
}

@article{khan2016classification,
  title={Classification of different hepatitis B infected individuals with saturated incidence rate},
  author={Khan, Tahir and Zaman, Gul},
  journal={SpringerPlus},
  volume={5},
  number={1},
  pages={1082},
  year={2016},
  publisher={Springer}
}

@article{khan2017transmission,
  title={The transmission dynamic and optimal control of acute and chronic hepatitis B},
  author={Khan, Tahir and Zaman, Gul and Chohan, M Ikhlaq},
  journal={Journal of biological dynamics},
  volume={11},
  number={1},
  pages={172--189},
  year={2017},
  publisher={Taylor \& Francis}
}

@incollection{bhadauria2022epidemic,
  title={Epidemic theory: Studying the effective and basic reproduction numbers, epidemic thresholds and techniques for the analysis of infectious diseases with particular emphasis on tuberculosis},
  author={Bhadauria, Archana Singh and Dhungana, Hom Nath},
  booktitle={Methods of Mathematical Modeling},
  pages={1--21},
  year={2022},
  publisher={Elsevier}
}

@book{la1976stability,
  title={The stability of dynamical systems},
  author={La Salle, Joseph P},
  year={1976},
  publisher={SIAM}
}

@article{bhadauria2021siq,
  title={A SIQ mathematical model on COVID-19 investigating the lockdown effect},
  author={Bhadauria, Archana Singh and Pathak, Rachana and Chaudhary, Manisha},
  journal={Infectious Disease Modelling},
  volume={6},
  pages={244--257},
  year={2021},
  publisher={Elsevier}
}

@article{GUMEL2003409,
title = {A qualitative study of a vaccination model with non-linear incidence},
journal = {Applied Mathematics and Computation},
volume = {143},
number = {2},
pages = {409-419},
year = {2003},
issn = {0096-3003},
doi = {https://doi.org/10.1016/S0096-3003(02)00372-7},
url = {https://www.sciencedirect.com/science/article/pii/S0096300302003727},
author = {A.B. Gumel and S.M. Moghadas}
}

@incollection{yadav2025comparative,
  title={A comparative study of different disease-incidence functions in the mathematical modeling of infectious diseases},
  author={Yadav, Sudha and Pandey, Aditya and Bhadauria, Archana Singh},
  booktitle={Mathematical Methods in Medical and Biological Sciences},
  pages={141--157},
  year={2025},
  publisher={Elsevier}
}

@article{zarin2021fractional,
  title={Fractional modeling and optimal control analysis of rabies virus under the convex incidence rate},
  author={Zarin, Rahat and Ahmed, Iftikhar and Kumam, Poom and Zeb, Anwar and Din, Anwarud},
  journal={Results in Physics},
  volume={28},
  pages={104665},
  year={2021},
  publisher={Elsevier}
}

@article{thornley2008hepatitis,
  title={Hepatitis B in a high prevalence New Zealand population: a mathematical model applied to infection control policy},
  author={Thornley, Simon and Bullen, Chris and Roberts, Mick},
  journal={Journal of Theoretical Biology},
  volume={254},
  number={3},
  pages={599--603},
  year={2008},
  publisher={Elsevier}
}

@article{zou2010modeling,
  title={Modeling the transmission dynamics and control of hepatitis B virus in China},
  author={Zou, Lan and Zhang, Weinian and Ruan, Shigui},
  journal={Journal of theoretical biology},
  volume={262},
  number={2},
  pages={330--338},
  year={2010},
  publisher={Elsevier}
}

@article{yadav2024modelling,
  title={Modelling the Effect of Multiple Interventions to Balance Healthcare Demand for Controlling Infectious Disease Outbreaks},
  author={Yadav, Sudha and Bhadauria, Archana Singh},
  journal={International Journal of Biomathematics},
  year={2024},
  publisher={World Scientific}
}

@article{khan2021modeling,
  title={Modeling and sensitivity analysis of HBV epidemic model with convex incidence rate},
  author={Khan, Amir and Zarin, Rahat and Hussain, Ghulam and Usman, Auwalu Hamisu and Humphries, Usa Wannasingha and Gomez-Aguilar, JF},
  journal={Results in Physics},
  volume={22},
  pages={103836},
  year={2021},
  publisher={Elsevier}
}

@article{pandey2025impact,
  title={Impact of COVID-19 vaccination and self-protection strategies for intervention of co-infection of COVID-19 and influenza},
  author={Pandey, Aditya and Yadav, Sudha and Verma, Vijai Shanker and Bhadauria, Archana Singh},
  journal={MESA},
  volume={16},
  number={2},
  pages={593--616},
  year={2025}
}

@article{lashari2016optimal,
  title={Optimal control of an SIR epidemic model with a saturated treatment},
  author={Lashari, Abid Ali},
  journal={Applied Mathematics \& Information Sciences},
  volume={10},
  number={1},
  pages={185},
  year={2016},
  publisher={Natural Sciences Publishing Corp}
}

@article{alrabaiah2020optimal,
  title={Optimal control analysis of hepatitis B virus with treatment and vaccination},
  author={Alrabaiah, Hussam and Safi, Mohammad A and DarAssi, Mahmoud H and Al-Hdaibat, Bashir and Ullah, Saif and Khan, Muhammad Altaf and Shah, Syed Azhar Ali},
  journal={Results in Physics},
  volume={19},
  pages={103599},
  year={2020},
  publisher={Elsevier}
}

@article{wang2019optimal,
  title={Optimal vaccination strategy of a constrained time-varying SEIR epidemic model},
  author={Wang, Xinwei and Peng, Haijun and Shi, Boyang and Jiang, Dianheng and Zhang, Sheng and Chen, Biaosong},
  journal={Communications in Nonlinear Science and Numerical Simulation},
  volume={67},
  pages={37--48},
  year={2019},
  publisher={Elsevier}
}

@article{freddi2020optimal,
  title={Optimal control of the transmission rate in compartmental epidemics},
  author={Freddi, Lorenzo},
  journal={arXiv preprint arXiv:2007.00318},
  year={2020}
}

@book{martcheva2015introduction,
  title={An introduction to mathematical epidemiology},
  author={Martcheva, Maia},
  volume={61},
  year={2015},
  publisher={Springer}
}

@article{molina2022optimal,
  title={An optimal feedback control that minimizes the epidemic peak in the SIR model under a budget constraint},
  author={Molina, Emilio and Rapaport, Alain},
  journal={Automatica},
  volume={146},
  pages={110596},
  year={2022},
  publisher={Elsevier}
}

@book{lenhart2007optimal,
  title={Optimal control applied to biological models},
  author={Lenhart, Suzanne and Workman, John T},
  year={2007},
  publisher={Chapman and Hall/CRC}
}

@book{pontryagin2018mathematical,
  title={Mathematical theory of optimal processes},
  author={Pontryagin, Lev Semenovich},
  year={2018},
  publisher={Routledge}
}

@article{qian2020sensitivity,
  title={Sensitivity analysis methods in the biomedical sciences},
  author={Qian, George and Mahdi, Adam},
  journal={Mathematical biosciences},
  volume={323},
  pages={108306},
  year={2020},
  publisher={Elsevier}
}

@article{gomero2012latin,
  title={Latin hypercube sampling and partial rank correlation coefficient analysis applied to an optimal control problem},
  author={Gomero, Boloye},
  year={2012}
}

@article{butt2023computational,
  title={Computational analysis of control of hepatitis B virus disease through vaccination and treatment strategies},
  author={Butt, Azhar Iqbal Kashif and Imran, Muhammad and Aslam, Javeria and Batool, Saira and Batool, Saira},
  journal={Plos one},
  volume={18},
  number={10},
  pages={e0288024},
  year={2023},
  publisher={Public Library of Science San Francisco, CA USA}
}

@article{korobeinikov2006lyapunov,
  title={Lyapunov functions and global stability for SIR and SIRS epidemiological models with non-linear transmission},
  author={Korobeinikov, Andrei},
  journal={Bulletin of Mathematical biology},
  volume={68},
  pages={615--626},
  year={2006},
  publisher={Springer}
}

@article{dejesus1987routh,
  title={Routh-Hurwitz criterion in the examination of eigenvalues of a system of nonlinear ordinary differential equations},
  author={DeJesus, Edmund X and Kaufman, Charles},
  journal={Physical Review A},
  volume={35},
  number={12},
  pages={5288},
  year={1987},
  publisher={APS}
}

@article{patil2021routh,
  title={Routh-Hurwitz criterion for stability: an overview and its implementation on characteristic equation vectors using MATLAB},
  author={Patil, Aseem},
  journal={Emerging Technologies in Data Mining and Information Security: Proceedings of IEMIS 2020, Volume 1},
  pages={319--329},
  year={2021},
  publisher={Springer}
}

@article{hurwitz1964conditions,
  title={On the conditions under which an equation has only roots with negative real parts},
  author={Hurwitz, Adolf and others},
  journal={Selected papers on mathematical trends in control theory},
  volume={65},
  pages={273--284},
  year={1964},
  publisher={Dover New York}
}

@book{lancaster1985theory,
  title={The theory of matrices: with applications},
  author={Lancaster, Peter and Tismenetsky, Miron},
  year={1985},
  publisher={Elsevier}
}

\end{document}